\newcommand{\nc}{\newcommand}
\nc{\brown}[1]{\textcolor{brown}{#1}}
\nc{\green}[1]{\textcolor{green}{#1}}
\nc{\red}[1]{\textcolor{red}{#1}}
\nc{\blue}[1]{\textcolor{blue}{#1}}
\begin{document}
\input xy
\xyoption{all}

\newcommand{\iadd}{\operatorname{iadd}\nolimits}
\newcommand{\res}{\operatorname{res}\nolimits}

\renewcommand{\mod}{\operatorname{mod}\nolimits}
\newcommand{\proj}{\operatorname{proj}\nolimits}
\newcommand{\inj}{\operatorname{inj.}\nolimits}
\newcommand{\rad}{\operatorname{rad}\nolimits}
\newcommand{\soc}{\operatorname{soc}\nolimits}
\newcommand{\ind}{\operatorname{inj.dim}\nolimits}
\newcommand{\Ginj}{\operatorname{Ginj}\nolimits}
\newcommand{\Mod}{\operatorname{Mod}\nolimits}
\newcommand{\R}{\operatorname{R}\nolimits}
\newcommand{\End}{\operatorname{End}\nolimits}
\newcommand{\colim}{\operatorname{colim}\nolimits}
\newcommand{\inc}{\operatorname{inc}\nolimits}
\newcommand{\gldim}{\operatorname{gl.dim}\nolimits}
\newcommand{\cone}{\operatorname{cone}\nolimits}
\newcommand{\rep}{\operatorname{rep}\nolimits}
\newcommand{\Ext}{\operatorname{Ext}\nolimits}
\newcommand{\Tor}{\operatorname{Tor}\nolimits}
\newcommand{\Hom}{\operatorname{Hom}\nolimits}
\newcommand{\Top}{\operatorname{top}\nolimits}
\newcommand{\Coker}{\operatorname{Coker}\nolimits}
\newcommand{\gid}{\operatorname{grinj.dim}\nolimits}
\newcommand{\thick}{\operatorname{thick}\nolimits}
\newcommand{\rank}{\operatorname{rank}\nolimits}
\newcommand{\Gproj}{\operatorname{Gproj}\nolimits}
\newcommand{\irr}{\operatorname{irr}\nolimits}
\newcommand{\RHom}{\operatorname{RHom}\nolimits}
\renewcommand{\deg}{\operatorname{deg}\nolimits}
\renewcommand{\Im}{\operatorname{Im}\nolimits}
\newcommand{\Ker}{\operatorname{Ker}\nolimits}
\newcommand{\coh}{\operatorname{coh}\nolimits}
\newcommand{\Id}{\operatorname{Id}\nolimits}
\newcommand{\Span}{\operatorname{Span}\nolimits}
\newcommand{\CM}{\operatorname{CM}\nolimits}
\newcommand{\For}{\operatorname{{\bf F}or}\nolimits}
\newcommand{\coker}{\operatorname{Coker}\nolimits}
\renewcommand{\dim}{\operatorname{dim}\nolimits}
\renewcommand{\div}{\operatorname{div}\nolimits}
\newcommand{\Ab}{{\operatorname{Ab}\nolimits}}
\newcommand{\diag}{{\operatorname{diag}\nolimits}}

\renewcommand{\Vec}{{\operatorname{Vec}\nolimits}}
\newcommand{\pd}{\operatorname{proj.dim}\nolimits}
\newcommand{\gr}{\operatorname{gr}\nolimits}
\newcommand{\id}{\operatorname{inj.dim}\nolimits}
\newcommand{\Gd}{\operatorname{G.dim}\nolimits}
\newcommand{\Ind}{\operatorname{Ind}\nolimits}
\newcommand{\add}{\operatorname{add}\nolimits}
\newcommand{\pr}{\operatorname{pr}\nolimits}
\newcommand{\oR}{\operatorname{R}\nolimits}
\newcommand{\oL}{\operatorname{L}\nolimits}
\newcommand{\Perf}{{\mathfrak Perf}}
\newcommand{\cc}{{\mathcal C}}
\newcommand{\gc}{{\mathcal GC}}
\newcommand{\ce}{{\mathcal E}}
\newcommand{\cs}{{\mathcal S}}
\newcommand{\cf}{{\mathcal F}}
\newcommand{\cx}{{\mathcal X}}
\newcommand{\ct}{{\mathcal T}}
\newcommand{\cu}{{\mathcal U}}
\newcommand{\cv}{{\mathcal V}}
\newcommand{\cn}{{\mathcal N}}
\newcommand{\mcr}{{\mathcal R}}
\newcommand{\ch}{{\mathcal H}}
\newcommand{\ca}{{\mathcal A}}
\newcommand{\cb}{{\mathcal B}}
\newcommand{\ci}{{\mathcal I}}
\newcommand{\cj}{{\mathcal J}}
\newcommand{\cm}{{\mathcal M}}
\newcommand{\cp}{{\mathcal P}}
\newcommand{\cg}{{\mathcal G}}
\newcommand{\cw}{{\mathcal W}}
\newcommand{\co}{{\mathcal O}}
\newcommand{\cq}{{\mathcal Q}}
\newcommand{\cd}{{\mathcal D}}
\newcommand{\ck}{{\mathcal K}}
\newcommand{\calr}{{\mathcal R}}
\newcommand{\ol}{\overline}
\newcommand{\ul}{\underline}
\newcommand{\st}{[1]}
\newcommand{\ow}{\widetilde}
\renewcommand{\P}{\mathbf{P}}
\newcommand{\pic}{\operatorname{Pic}\nolimits}
\newcommand{\Spec}{\operatorname{Spec}\nolimits}
\newtheorem{theorem}{Theorem}[subsection]
\newtheorem{acknowledgement}[theorem]{Acknowledgement}
\newtheorem{algorithm}[theorem]{Algorithm}
\newtheorem{axiom}[theorem]{Axiom}
\newtheorem{case}[theorem]{Case}
\newtheorem{claim}[theorem]{Claim}
\newtheorem{conclusion}[theorem]{Conclusion}
\newtheorem{condition}[theorem]{Condition}
\newtheorem{conjecture}[theorem]{Conjecture}
\newtheorem{construction}[theorem]{Construction}
\newtheorem{corollary}[theorem]{Corollary}
\newtheorem{criterion}[theorem]{Criterion}
\newtheorem{definition}[theorem]{Definition}
\newtheorem{example}[theorem]{Example}
\newtheorem{exercise}[theorem]{Exercise}
\newtheorem{lemma}[theorem]{Lemma}
\newtheorem{notation}[theorem]{Notation}
\newtheorem{problem}[theorem]{Problem}
\newtheorem{proposition}[theorem]{Proposition}
\newtheorem{remark}[theorem]{Remark}
\newtheorem{solution}[theorem]{Solution}
\newtheorem{summary}[theorem]{Summary}
\newtheorem*{thm}{Theorem}

\def \bp{{\mathbf p}}
\def \bA{{\mathbf A}}
\def \bL{{\mathbf L}}
\def \bF{{\mathcal F}}
\def \F{{\mathbf F}}

\def \bS{{\mathbf S}}
\def \bC{{\mathbf C}}

\def \Z{{\Bbb Z}}
\def \D{{\Bbb D}}
\def \C{{\Bbb C}}
\def \N{{\Bbb N}}
\def \Q{{\Bbb Q}}
\def \G{{\Bbb G}}
\def \X{{\Bbb X}}
\def \P{{\Bbb P}}
\def \K{{\Bbb K}}
\def \E{{\Bbb E}}
\def \A{{\Bbb A}}
\def \L{{\Bbb L}}
\def \BH{{\Bbb H}}
\def \T{{\Bbb T}}
\newcommand {\lu}[1]{\textcolor{red}{$\clubsuit$: #1}}
\newcommand {\zhu}[1]{\textcolor{red}{$\spadesuit$: #1}}

\numberwithin{equation}{subsection}
\allowdisplaybreaks

\title[Singularity categories of Gorenstein monomial algebras]{Singularity categories of Gorenstein monomial algebras\footnote{Supported by the National Natural Science Foundation of China (No. 11401401, No. 11601441, No. 12031007)}}

\author[Ming Lu]{Ming Lu}
\address{Department of Mathematics, Sichuan University, Chengdu 610064, P.R.China}
\email{luming@scu.edu.cn}

\author[Bin Zhu]{Bin Zhu}
\address{Department of Mathematical Sciences,
Tsinghua University,
Beijing 100084, P.R.China}
\email{zhu-b@tsinghua.edu.cn}

\subjclass[2010]{18G25, 18E30, 16G10}
\keywords{Monomial algebras; Gorenstein algebras; Silting objects; Singularity categories; Tilting objects.}

\begin{abstract}
In this paper, we consider the  singularity category $D_{sg}(\mod A)$ and the $\mathbb{Z}$-graded singularity category $D_{sg}(\mod^{\mathbb Z} A)$ for a Gorenstein monomial algebra $A$.
Firstly, for a positively graded $1$-Gorenstein algebra, we prove that its ${\mathbb Z}$-graded singularity category admits silting objects. Secondly, for $A=KQ/I$ being a Gorenstein monomial algebra, we prove that $D_{sg}(\mod^{\mathbb Z} A)$ has tilting objects. As a consequence, $D_{sg}(\mod^{\mathbb Z}A)$ is triangulated equivalent to the derived category of a hereditary algebra $H$ which is of finite representation type. Finally, we give a characterization of $1$-Gorenstein monomial algebras, and describe their singularity categories clearly by using the triangulated orbit categories of type ${\mathbb A}$.
\end{abstract}

\maketitle
\tableofcontents
\section{Introduction}

The singularity category of an algebra is defined to be the Verdier quotient of the bounded derived category with respect to the thick subcategory formed by complexes isomorphic to bounded complexes of finitely generated projective modules \cite{Bu}; see also \cite{Ha3}. The singularity category measures the homological singularity of an algebra \cite{Ha3}: an algebra has finite global dimension if and only if its singularity category is trivial. Recently, D. Orlov's global version \cite{Or1} attracted a lot of interest in algebraic geometry and theoretical physics.

The concept of the Gorenstein (also called Iwanaga-Gorenstein) algebras is inspired from commutative ring theory. A fundamental result of R. Buchweitz \cite{Bu} and D. Happel \cite{Ha3} states that for a Gorenstein algebra $A$, its singularity category is triangulated equivalent to the stable category of Gorenstein projective (also called (maximal) Cohen-Macaulay) $A$-modules, which generalizes J. Rickard's result \cite{Ri} on self-injective algebras. It is worth noting that Gorenstein
algebras, especially $1$-Gorenstein algebras are playing an important role in representation theory of finite-dimensional algebras, which include  some important classes of algebras, for examples, the cluster-tilted algebras \cite{BMR1,BMR2}, $2$-CY-tilted algebras \cite{KR, Rei}, or more general the endomorphism algebras of cluster tilting objects in triangulated categories \cite{KZ}, the class of $1$-Gorenstein algebras defined by  C. Geiss, B. Leclerc and J. Schr\"{o}er via quivers with relations associated with symmetrizable Cartan matrices \cite{GLS}.

In general, it is difficult to describe the singularity categories and the Gorenstein projective modules. Many people are trying to describe them for some special classes of algebras; see e.g. \cite{Chen,CSZ,CGLu,IO,Ka,Ki1,Ki2,Ki3,Lu,Rin,Shen1,Shen,Ya}. In this paper, we focus on describing the (graded) singularity categories of Gorenstein monomial algebras. For a monomial algebra $A$, X-W. Chen, D. Shen and G. Zhou in \cite{CSZ} give an explicit classification of indecomposable Gorenstein projective $A$-modules, which unifies the results in \cite{Rin} and \cite{Ka} to some extent. For some special monomial algebras, their singularity categories are also described explicitly; see \cite{Rin} for Nakayama algebras, \cite{Ka} for gentle algebras and \cite{Chen} for quadratic monomial algebras.

Recently K. Yamaura \cite{Ya} proved that for a finite-dimensional positively graded self-injective algebra $A=\bigoplus_{i\geq0}A_i$ with $\gldim A_0<\infty$, its stable category of the $\Z$-graded modules admits a tilting object. Inspired by this result, we consider the $\Z$-graded singularity category $D_{sg}(\mod^\Z A)$ for a positively graded Gorenstein algebra $A=\bigoplus_{i\geq0}A_i$. To state our results, we need some notations.

Following \cite{Ya}, we define the truncation functors $$(-)_{\geq i}:\mod^\Z A\rightarrow\mod^\Z A,\quad (-)_{\leq i}:\mod^\Z A\rightarrow \mod^\Z A$$
as follows. For a $\Z$-graded $A$-module $X$, $X_{\geq i}$ is a $\Z$-graded sub $A$-module of $X$
defined by
$$(X_{\geq i})_j:=\left\{ \begin{array}{ccc} 0& \mbox{ if }j<i \\ X_j& \mbox{ if }j\geq i,\end{array} \right.$$
and $X_{\leq i}$ is a $\Z$-graded factor $A$-module $X/X_{\geq i+1}$ of $X$.
Now we define a $\Z$-graded $A$-module by
\begin{equation}\label{def:T}
T:=\bigoplus_{i\geq0} A(i)_{\leq0},
\end{equation}
where $(i)$ is the degree shift functor.

First, if $A=\bigoplus_{i\geq0}A_i$ is a finite-dimensional positively graded $1$-Gorenstein algebra with $\gldim A_0<\infty$, then $T$ is a silting object in $D_{sg}(\mod^\Z A)$; see Theorem \ref{theorem silting object}. Recall that for a triangulated category $\cc$, an object $T$ is called to be a \emph{silting object}, if $\Hom_\cc(T,T[m])=0$ for any $m>0$, and $T$ generates $\cc$; see \cite{KV,AI}. Second, for any positively graded Gorenstein algebra $A=\bigoplus_{i\geq0}A_i$ with $\gldim A_0<\infty$, if $T=\bigoplus_{i\geq0} A(i)_{\leq0}$ is a Gorenstein projective $A$-module, then $T$ is a tilting object in $D_{sg}(\mod^\Z A)$; see Proposition \ref{corollary for T CM}. It is worth noting that the first author uses this result to describe the singularity categories of quiver representations over local rings $K[X]/(X^k)$, $k>1$ (see \cite{Lu}), which generalises the results in \cite{RS,RZ}.

A natural question is that when does $D_{sg}(\mod^\Z A)$ admit a tilting object for a positively graded algebra $A=\bigoplus_{i\geq0} A_i$.  We refer to \cite{HIMO,IO,IT,Ki1,Ki2,Ki3,Lu,MU,U,Ya} for recent results related to this question. In this case, $A_0$ should satisfy that $\gldim A_0<\infty$; see Lemma \ref{lemma finite global dimension}. For Gorenstein monomial algebras, we prove that it is true, i.e., their graded singularity categories admit tilting objects; see Theorem \ref{main theorem}. As a consequence, $D_{sg}(\mod^\Z A)$ is triangulated equivalent to $D^b(H)$, where $H$ is a hereditary algebra of finite representation type; see Proposition \ref{proposition equivalent to hereditary algebra}. As corollaries, for $A$ being a Gorenstein quadratic monomial algebra, there exists a hereditary algebra $B=KQ$ with $Q$ a union of quivers of type $\A_1$ such that $D_{sg}(\mod^\Z A)\simeq D^b(\mod B)$; for $A$ being a Gorenstein Nakayama algebra, then there exists a hereditary algebra $B=KQ$ with $Q$ a union of quivers of type $\A$ such that $D_{sg}(\mod^\Z A)\simeq D^b(\mod B)$.

Finally, we consider the $1$-Gorenstein monomial algebra, which is a special kind of monomial algebras. First, we characterize $1$-Gorenstein monomial algebra $A=KQ/I$ by using the minimal generators of the two-sided ideal $I$; see Theorem \ref{proposition characterize of 1 Gorenstein monomial algebras}, which generalises \cite[Proposition 3.1]{CL} for gentle algebras. Second, there exists a hereditary algebra $B=KQ^B$ with $Q^B$ a union of quivers of type $\A$ such that $D_{sg}(\mod^\Z A)\simeq D^b(\mod B)$; see Theorem \ref{singularity category of 1-Gorenstein algebras}. Third, we describe explicitly its ordinary singularity category $D_{sg}(\mod A)$ by using the triangulated orbit categories of type $\A$ in the sense of \cite{Ke}; see Theorem \ref{singularity category of 1-gorenstein monomial algebras}. In fact, in order to prove this result, we consider a kind of gluing algebras, which is defined by T. Br\"{u}stle in \cite{Br}, and called the \emph{Br\"{u}stle's gluing algebras}. We prove that singularity categories, and also Gorenstein properties are invariant under Br\"{u}stle's gluing process; see Theorem \ref{theorem singularity equivalence of gluing vertices} and Proposition \ref{proposition of gorenstein properties of gluiing algebras}.

The paper is organized as follows. In Section \ref{section 2}, we collect some materials on positively graded algebras, Gorenstein algebras, (graded) Gorenstein projective modules and (graded) singularity categories. In Section \ref{section 3}, we prove that there is a silting object in $D_{sg}(\mod^\Z A)$ for a $1$-Gorenstein positively graded algebra $A= \bigoplus_{i\geq0}A_i$ if $\gldim A_0<\infty$. In Section \ref{section 4}, we prove that $D_{sg}(\mod^\Z A)$ admits a tilting object for a Gorenstein monomial algebra $A$. In Section \ref{section 5}, we characterize $1$-Gorenstein monomial algebras. In Section \ref{section 6}, we describe the singularity categories for $1$-Gorenstein monomial algebras.

\vspace{0.2cm} \noindent{\bf Acknowledgments.}
The work was done during the stay of the first author at the Department of Mathematics,
University of Bielefeld. He is deeply indebted to Professor Henning Krause for his kind
hospitality, inspiration and continuous encouragement. Both authors deeply thank Professor Xiao-Wu Chen for helpful discussions. We thank the anonymous referee for his/her careful readings and suggestions which help make this paper more readable.

After finishing a first version of this paper appeared in arXiv, we were informed kindly from Professor Osamu Iyama and Professor Kota Yamaura that H. Minamoto, Y. Kimura and K. Yamaura \cite{MKY} has also obtained the same result with Theorem \ref{theorem silting object}. Kota Yamaura pointed out that $A$ should be of Gorenstein parameter $l$ to make the statements in Theorem \ref{theorem endomorphism ring of gorenstein projective modules} hold.  We are deeply indebted to them for these and for their helpful comments!

\section{Preliminaries}\label{section 2}
Throughout this paper $K$ is an algebraically closed field and algebras are finite-dimensional $K$-algebras. % unless otherwise specified.
We denote by $D$ the $K$-dual, i.e., $D(-)=\Hom_K(-,K)$.

Let $A$ be a $K$-algebra. We denote by $\mod A$ the category of finitely generated (left) $A$-modules, by $\proj A$ the category of finitely generated projective $A$-modules.

For an additive category $\ca$, we use $\Ind\ca$ to denote the set of all non-isomorphic indecomposable objects in $\ca$.

Let $\ca$ be an abelian category, $\cx$ a full additive subcategory of $\ca$. For any $M\in\ca$, a \emph{right $\cx$-approximation} of $M$ is a morphism $f:X\rightarrow M$ such that $X\in\cx$ and any morphism $X'\rightarrow M$ from an object $X'\in\cx$ factors through $f$. Dually one has the notion of \emph{left $\cx$-approximation}. The subcategory $\cx\subseteq \ca$ is said to be \emph{contravariantly finite} (resp. \emph{covariantly finite}) provided that each object in $\ca$ has a right (resp. left) $\cx$-approximation. The subcategory $\cs\subseteq \ca$ is said to be \emph{functorially finite} provided it is both contravariantly finite and covariantly finite.

\subsection{Positively graded algebras}
Let $A$ be a $\Z$-graded algebra, i.e., $A=\bigoplus_{i\in\Z} A_i$, where $A_i$ is the degree $i$ part of $A$.
A $\Z$-graded $A$-module $X$ is of the form $\bigoplus_{i\in \Z}X_i$, where $X_i$ is the degree $i$ part of $X$.
The category $\mod ^\Z A$ of (finitely generated) $\Z$-graded $A$-modules is defined as follows.
\begin{itemize}
\item The objects are graded $A$-modules,
\item For graded $A$-modules $X$ and $Y$, the morphism space from $X$ to $Y$ in $\mod^\Z A$ is $$\Hom_{\mod^\Z A}(X,Y):=\Hom_A(X,Y)_0:=\{ f\in \Hom_A(X,Y)\mid f(X_i)\subseteq Y_i\mbox{ for any }i\in \Z\}.$$
\end{itemize}
We denote by $\proj^\Z A$ the full subcategory of $\mod^\Z A$ consisting of projective objects.

For $i\in \Z$, we denote by $(i):\mod^\Z A\rightarrow \mod^\Z A$ the \emph{degree shift functor}.
Then for any two graded $A$-modules $X,Y$,
\begin{equation}\label{equation morphism}
\Hom_A(X,Y)=\bigoplus_{i\in \Z}\Hom_A(X,Y(i))_0.
\end{equation}

Denote by $J_A$ the \emph{Jacobson radical} of $A$.
\begin{proposition}[\text{\cite[Proposition 2.16]{Ya}}]\label{proposition characterize of simple porjective injective modules of graded algebras}
Assume that $J_A=J_{A_0}\oplus (\bigoplus_{i\neq0}A_i)$. We take a set $\overline{PI}$ of idempotents of $A_0$ such that $\{A_0e|e\in\overline{PI}\}$ is a complete list of indecomposable projective $A_0$-modules. Then the following assertions hold.
\begin{itemize}
\item[(i)] Any complete set of orthogonal primitive idempotents of $A_0$ is that of $A$.
\item[(ii)] A complete list of simple objects in $\mod^\Z A$ is given by
$$\{ S(i)\mid i\in \Z,S \mbox{ is a simple }A_0\mbox{-module}\}.$$
\item[(iii)] A complete list of indecomposable projective objects in $\mod^\Z A$ is given by
$$\{ A(i)e\mid i\in \Z,e\in\overline{PI}\}.$$
\item[(iv)] A complete list of indecomposable injective objects in $\mod^\Z A$ is given by
$$\{ D(eA)(i)\mid i\in \Z,e\in\overline{PI}\}.$$
\end{itemize}
\end{proposition}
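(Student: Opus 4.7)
My plan is to derive everything from two consequences of the hypothesis on the Jacobson radical. First, the composite $A_0 \hookrightarrow A \twoheadrightarrow A/J_A$ has kernel $A_0 \cap J_A = J_{A_0}$ and is surjective (since $J_A$ already contains every $A_i$ with $i \neq 0$), so it induces a ring isomorphism $A_0/J_{A_0} \xrightarrow{\sim} A/J_A$; in particular $A/J_A$ is semisimple and concentrated in degree $0$. Second, the two-sided ideal $\bigoplus_{i \neq 0} A_i$ is contained in $J_A$, so it annihilates every simple $A$-module.

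For (a) I would combine the first consequence with the classical fact that a complete set of orthogonal primitive idempotents of an Artinian ring lifts uniquely, up to conjugation, along the projection modulo the Jacobson radical. Since $A/J_A = A_0/J_{A_0}$, a complete set of orthogonal primitive idempotents of $A_0$ projects to such a set in $A/J_A$, and hence is already a complete set of orthogonal primitive idempotents of $A$.

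The crux is (b). For any graded $A$-module $M$, the truncation $M_{\geq i} = \bigoplus_{j \geq i} M_j$ is a graded $A$-submodule, because the positive grading gives $A_\ell \cdot M_j \subseteq M_{j+\ell}$ with $\ell \geq 0$. Applied to a graded simple $S$, this forces $S$ to be concentrated in a single degree $i$. Replacing $S$ by $S(-i)$ we may take $i = 0$; the second consequence above then tells us that $A_\ell$ acts as zero on $S$ for every $\ell \neq 0$, and $S$ reduces to a simple $A_0$-module in degree $0$. Conversely, any simple $A_0$-module placed in degree $0$ and extended by zero is a graded simple $A$-module. Since morphisms in $\mod^\Z A$ are required to preserve degree, different shifts $S(i)$ of different $A_0$-simples are pairwise non-isomorphic in $\mod^\Z A$, which gives the stated complete list.

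Parts (c) and (d) follow by routine bookkeeping. For (c), each $e \in \overline{PI}$ is primitive in $A$ by (a), so the indecomposable direct summand of $A$ corresponding to $e$, with its natural grading, is indecomposable projective in $\mod^\Z A$; its top is the grade-$0$ simple associated to $e$ in (b), and grade-shifting produces projective covers of all the simples $S(i)$. Part (d) is the $k$-dual statement, using that $D$ sends indecomposable projective right $A$-modules to indecomposable injective left $A$-modules and exchanges top with socle. The main obstacle I anticipate is the graded-simple reduction in (b); once that is in place, the other three assertions are standard Artinian-ring bookkeeping.
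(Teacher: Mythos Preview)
The paper does not prove this proposition at all; it is quoted verbatim from Yamaura \cite{Ya} and used as a black box, so there is no proof in the paper to compare your argument against.

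Your argument is essentially sound, but there is one slip in the proof of (b). You justify that the truncation $M_{\geq i}$ is a graded submodule ``because the positive grading gives $A_\ell \cdot M_j \subseteq M_{j+\ell}$ with $\ell \geq 0$''. Positive grading is not part of the hypothesis: the proposition is stated for an arbitrary $\Z$-graded $A$ satisfying only the Jacobson-radical condition $J_A = J_{A_0} \oplus \bigoplus_{i \neq 0} A_i$, and the paper only afterwards remarks that positively graded algebras happen to satisfy this condition. Fortunately the truncation step is unnecessary. Your own ``second consequence''---that every $A_\ell$ with $\ell \neq 0$ lies in $J_A$ and therefore annihilates every simple $A$-module---already forces concentration in a single degree: for a graded simple $S$ each homogeneous piece $S_j$ is then a graded $A$-submodule (since $A_0 S_j \subseteq S_j$ and $A_\ell S_j = 0$ for $\ell \neq 0$), and simplicity gives $S = S_j$ for exactly one $j$. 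A minor aside: $\bigoplus_{i \neq 0} A_i$ need not be a two-sided ideal (for instance $A_{-1} A_1 \subseteq A_0$), but this does not matter since containment in $J_A$ is all you actually use. With this small rearrangement parts (a)--(d) go through as you outline.
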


Let $A$ be a positively graded algebra, i.e., $A=\bigoplus_{i\geq0} A_i$.
Note that for a positively graded algebra $A$, the equation
$$J_A=J_{A_0}\oplus (\bigoplus_{i\neq0}A_i)$$
always holds. So $A$ satisfies the assumption of Proposition \ref{proposition characterize of simple porjective injective modules of graded algebras}; see \cite[Proposition 2.18]{Ya}.

\subsection{Gorenstein algebras and singularity categories}
Let $A$ be an algebra.
A complex $$P^\bullet:\cdots\rightarrow P^{-1}\rightarrow P^0\xrightarrow{d^0}P^1\rightarrow \cdots$$ of finitely generated projective $A$-modules is said to be \emph{totally acyclic} provided it is acyclic and the Hom complex $\Hom_A(P^\bullet,A)$ is also acyclic \cite[Page 7]{AM}.
An $A$-module $M$ is said to be (finitely generated) \emph{Gorenstein projective} provided that there is a totally acyclic complex $P^\bullet$ of projective $A$-modules such that $M\cong \Ker d^0$; see \cite[Definition 10.2.1]{EJ}. We denote by $\Gproj A$ the full subcategory of $\mod A$ consisting of Gorenstein projective modules.

Let $\cx$ be a subcategory of $\mod A$. Define $^\bot\cx:=\{M\mid\Ext^i(M,X)=0, \mbox{ for all } X\in\cx, i\geq1\}$. Dually, we can define $\cx^\bot$. In particular, we denote by $^\bot A:={}^\bot
(\proj A)$.
The following lemma follows from the definition of Gorenstein projective modules.

\begin{lemma}
[\text{\cite[Proposition 10.2.6, Remark 10.2.9]{EJ}}]\label{lemma property of gorenstein projective modules}
Let $A$ be an algebra.
\begin{itemize}
\item[(i)]
For any $M\in\mod A$, $M$ is Gorenstein projective if and only if there exists an exact sequence $0\rightarrow M\rightarrow P^0\xrightarrow{d^0}P^1\xrightarrow{d^1}\cdots$, such that $P^i\in\proj A,\ker d^i\in{}^\bot A$, for any $i\geq0$.
\item[(ii)] If $M$ is Gorenstein projective, then $\Ext^i_A(M,L)=0$, for any $L$ of finite projective dimension or of finite injective dimension, and $i>0$.
\item[(iii)] If $P^\bullet$ is a totally acyclic complex, then all $\Im d^i$ are Gorenstein projective; and any
truncations
$$\cdots\rightarrow P^i\rightarrow\Im d^i\rightarrow0,\quad 0\rightarrow\Im d^i\rightarrow P^{i+1}\rightarrow\cdots$$
and
$$0\rightarrow\Im d^i\rightarrow P^{i+1}\rightarrow\cdots\rightarrow P^j\rightarrow \Im d^j\rightarrow0,\text{ }i<j$$
are $\Hom_A(-,\proj A)$-exact.
\end{itemize}
\end{lemma}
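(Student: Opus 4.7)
The plan is to prove the three parts in sequence, in each case exploiting the right half of the totally acyclic complex that witnesses Gorenstein projectivity.

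For part (i), the forward direction is immediate from the definition: if $P^\bullet$ is totally acyclic with $M\cong\Ker d^0$, then the right truncation $0\to M\to P^0\to P^1\to\cdots$ is exact, and each $\Ker d^i=\Im d^{i-1}$ lies in ${}^\bot A$ by dimension-shifting along this right half, using $\Ext^{>0}_A(P^j,A)=0$ together with the exactness of $\Hom_A(P^\bullet,A)$. For the converse, I would splice the given right-acyclic complex with an ordinary projective resolution $\cdots\to P^{-2}\to P^{-1}\to M\to 0$ and verify that the resulting two-sided complex is totally acyclic. Acyclicity is automatic; for $\Hom_A(-,A)$-exactness the hypothesis $\Ker d^i\in{}^\bot A$ handles every position on the right, while $M\in{}^\bot A$ combined with the projectivity of each $P^{-j}$ handles the splicing point and all positions on the left via the usual dimension-shift.

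For part (ii), I would again dimension-shift along the right half. When $\id_A L=d<\infty$, extract from $P^\bullet$ the finite exact sequence $0\to M\to P^0\to\cdots\to P^{d-1}\to N\to 0$ with $N:=\Im d^{d-1}$, split it into short exact sequences, and use projectivity of each $P^j$ to obtain $\Ext^i_A(M,L)\cong\Ext^{i+d}_A(N,L)$ for $i\geq 1$; the right side vanishes because $\Ext^{>d}_A(-,L)=0$. When $\pd_A L<\infty$, take a finite projective resolution of $L$ and dimension-shift on that side instead, using $\Ext^{>0}_A(M,A)=0$, which is immediate from the exactness of $\Hom_A(P^\bullet,A)$.

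For part (iii), each $\Im d^i$ is the degree-$0$ kernel of the shifted totally acyclic complex $P^\bullet[i+1]$, hence Gorenstein projective by definition. For the $\Hom_A(-,\proj A)$-exactness of the listed truncations, observe that any $Q\in\proj A$ is a direct summand of $A^n$ for some $n$, so $\Hom_A(-,Q)$ is a direct summand of $\Hom_A(-,A)^n$; exactness of $\Hom_A(P^\bullet,A)$ therefore transfers to $\Hom_A(P^\bullet,Q)$, and restricting to any truncation preserves this exactness (the boundary exactness at $\Im d^i$ follows from the surjection $P^i\twoheadrightarrow\Im d^i$ together with acyclicity of $P^\bullet$). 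The main obstacle I anticipate is the converse direction of (i): one must check carefully that splicing the right-acyclic complex to a standard projective resolution of $M$ introduces no new cohomology in $\Hom_A(-,A)$, which is precisely where the full strength of the ${}^\bot A$ condition at every degree is needed.
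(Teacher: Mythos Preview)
Your proposal is correct and follows the standard route. Note, however, that the paper does not actually supply a proof of this lemma: it is introduced with the sentence ``The following lemma follows from the definition of Gorenstein projective module easily,'' and part (i) is attributed to \cite{Be}. What you have written is precisely the kind of routine verification the authors are alluding to---splicing a projective resolution onto the right-acyclic complex and checking $\Hom_A(-,A)$-exactness via the ${}^\bot A$ hypothesis for (i), dimension-shifting in the two possible directions for (ii), and reading off (iii) from the definition of total acyclicity together with the fact that $\Hom_A(-,Q)$ is a summand of $\Hom_A(-,A)^n$ for $Q\in\proj A$. There is nothing to compare against in the paper itself, and your argument has no gaps; the only place requiring a moment's care is, as you note, the splicing point in (i), and you handle it correctly by invoking $M=\Ker d^0\in{}^\bot A$.
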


\begin{definition}
[\text{\cite[Page 150]{AR}, \cite[Page 389]{Ha3}}]
An algebra $A$ is called a Gorenstein (or Iwanaga-Gorenstein) algebra if $A$ satisfies $\id\,_A A<\infty$ and $\id A_A <\infty$.
\end{definition}
Then a $K$-algebra $A$ is Gorenstein if and only if $\id A_A<\infty$ and $\pd D(_AA)<\infty$; see  \cite[Page 389]{Ha3}.
Observe that for a Gorenstein algebra $A$, we have $\id _AA=\id A_A$; see \cite[Lemma A]{Za}, \cite[Lemma 6.9]{AR}. A Gorenstein algebra $A$ with $\id_AA$ at most $d$ is called \emph{$d$-Gorenstein}; see \cite[Definition 9.1.9]{EJ}. In the literature there is a different notion of $d$-Gorenstein algebra; see \cite[Page 11]{AR3}.% , and the common value is denoted by $\Gd A$. If $\Gd A\leq d$, we say that $A$ is a \emph{$d$-Gorenstein} algebra.

\begin{definition}
[\text{\cite[Definition 4.2.1]{Bu}; see also \cite[Page 223]{AR2}, \cite[Definition 3.2]{Be}}]
Let $A$ be a Gorenstein algebra. A finitely generated $A$-module $M$ is called (maximal) Cohen-Macaulay if
$$\Ext^i_A(M,A)=0\mbox{ for }i\neq0.$$
\end{definition}

%The full subcategory of Cohen-Macaulay modules in $\mod A$ is denoted by $\CM(A)$.

For a module $M$ take a short exact sequence $0\rightarrow\Omega(M)\rightarrow P\rightarrow M\rightarrow0$ with $P$ projective. The module $\Omega(M)$ is called a \emph{syzygy module} of $M$. Note that syzygy modules of $M$ are not uniquely determined, while they are isomorphic to each other in $\underline{\mod}A$. To avoid confusions, unless otherwise specified, $\Omega(M)$ always means the kernel of the projective cover $P\rightarrow M$, which is uniquely determined (up to isomorphism) in $\mod A$.

\begin{theorem}[\text{\cite[Theorem 3.2]{AM}}]\label{theorem characterize of gorenstein property}
Let $A$ be an algebra and let $d\geq0$. Then the following statements are equivalent:
\begin{itemize}
\item[(i)] the algebra $A$ is $d$-Gorenstein;

\item[(ii)] $\Gproj(A)=\Omega^d(\mod A)$.
\end{itemize}
In this case, A module $G$ is Gorenstein projective if and only if there is an exact sequence $0\rightarrow G\rightarrow P^0\rightarrow P^1\rightarrow \cdots$ with each
$P^i$ projective.
\end{theorem}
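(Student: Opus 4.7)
The plan is to establish both implications together with the final characterization by combining Buchweitz's identification $\Gproj A = {}^\bot A$ for Gorenstein algebras with routine dimension-shifting in $\Ext$. Lemma \ref{lemma property of gorenstein projective modules} will be the main workhorse throughout.

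For (a) $\Rightarrow$ (b), I would first show $\Gproj A \subseteq \Omega^d(\mod A)$: take the complete projective resolution $\cdots \to P^{-1} \to P^0 \xrightarrow{d^0} P^1 \to \cdots$ of a Gorenstein projective module $M$, set $N := \Im d^{d-1}$, and read off the exact sequence $0 \to M \to P^0 \to \cdots \to P^{d-1} \to N \to 0$, which exhibits $M$ as a $d$-th syzygy of $N$ (up to projective summands via Schanuel's lemma). For the reverse inclusion $\Omega^d(\mod A) \subseteq \Gproj A$, given $M = \Omega^d(N)$, dimension-shifting with $\id {}_AA \leq d$ yields $\Ext^i(M,A) = \Ext^{i+d}(N,A) = 0$ for all $i > 0$, so $M \in {}^\bot A$. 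The remaining step ${}^\bot A \subseteq \Gproj A$ is the heart of the argument: starting from $M \in {}^\bot A$, I would iteratively construct left $\add(A)$-approximations $M \to P^0$, $\Coker \to P^1$, and so on; the finite injective dimension $\id A_A \leq d$ (equivalently $\pd D(A) \leq d$) is used decisively to guarantee these approximations are injective, while the long exact $\Ext$-sequence together with $\Ext^1(-,A) = 0$ shows each successive cokernel remains in ${}^\bot A$. Splicing the resulting right resolution with a projective resolution of $M$ yields the totally acyclic complex required by Lemma \ref{lemma property of gorenstein projective modules}(i).

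For (b) $\Rightarrow$ (a), any $M \in \mod A$ satisfies $\Omega^d(M) \in \Gproj A$ by hypothesis, so Lemma \ref{lemma property of gorenstein projective modules}(ii) gives $\Ext^i(\Omega^d M, A) = 0$ for $i > 0$; dimension-shifting yields $\Ext^i(M,A) = 0$ for $i > d$, and hence $\id {}_AA \leq d$. To promote this to the symmetric bound $\id A_A \leq d$, I would invoke the Enochs--Jenda symmetry of left and right Gorenstein global dimensions: the hypothesis bounds the left Gorenstein global dimension by $d$, which forces the right side to be bounded by $d$ as well, and applying the same dimension-shifting argument over $A^{op}$ yields $\id A_A \leq d$. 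The final ``in this case'' assertion is then formal: given any exact sequence $0 \to G \to P^0 \to P^1 \to \cdots$ with the $P^i$ projective, truncating at degree $d$ shows $G \in \Omega^d(\mod A) = \Gproj A$; conversely, the right half of a complete resolution of a Gorenstein projective $G$ provides such a coresolution.

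The main obstacle is the construction of the right half of the complete resolution for $M \in {}^\bot A$, which is essentially Buchweitz's theorem in the Gorenstein setting: the finiteness of $\id A_A$ is what forces the left $\add(A)$-approximations to be injective, and keeping track of the orthogonality of successive cokernels requires a careful induction. A secondary delicate point is the passage from the one-sided bound $\id {}_AA \leq d$ to the two-sided $d$-Gorenstein property in (b) $\Rightarrow$ (a), which rests on the nontrivial symmetry of left and right Gorenstein global dimensions.
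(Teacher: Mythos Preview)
The paper does not prove this theorem; it is quoted from \cite{AM,EJ} without argument, so there is no in-paper proof to compare against. Your outline is correct and follows the standard route in those references.

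Two comments on the details. For $(a)\Rightarrow(b)$, the step you single out as the heart---injectivity of the left $\add(A)$-approximation $f\colon M\to P^0$ for $M\in{}^\bot A$---does rest on $\id A_A\le d$, but the mechanism deserves to be named: one shows $M$ is torsionless via the Auslander--Bridger exact sequence $0\to\Ext^1_{A^{\mathrm{op}}}(\operatorname{Tr}M,A)\to M\to M^{**}$, where the $\Ext^1$ term vanishes after dimension-shifting along the dualized projective resolution of $M$, using $\id A_A\le d$. Once $M$ is known to embed in \emph{some} projective, any left $\add(A)$-approximation is automatically injective, and your cokernel argument then iterates cleanly.

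For $(b)\Rightarrow(a)$, your appeal to the left/right symmetry of the Gorenstein global dimension is legitimate but heavier than needed. A direct argument using only Lemma~\ref{lemma property of gorenstein projective modules}(ii): having established $\id\,_AA\le d$, set $G:=\Omega^d(D(A))$, which lies in $\Gproj A$ by hypothesis and has $\id G<\infty$ (iterate $\id\Omega X\le\max(\id X+1,d)$ starting from $\id D(A)=0$). The complete resolution of $G$ yields $0\to G\to P\to G'\to 0$ with $G'\in\Gproj A$; since $\id G<\infty$, Lemma~\ref{lemma property of gorenstein projective modules}(ii) gives $\Ext^1_A(G',G)=0$, so the sequence splits and $G$ is projective. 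Hence $\pd_A D(A)\le d$, i.e.\ $\id A_A\le d$.
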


From Theorem \ref{theorem characterize of gorenstein property}, it is easy to see that for a Gorenstein algebra, the concept of Gorenstein projective modules coincides with that of Cohen-Macaulay modules.

For an algebra $A$ and $n\geq0$, denote by $\cp^{\leq n}(\mod A)$ the full subcategory of $\mod A$ consisting of modules having projective dimension at most $n$. Denote by $\cp^{<\infty}(\mod A)$ the union of these categories. It is well known that for a $d$-Gorenstein algebra $A$, we have $\cp^{<\infty}(\mod A)=\cp^{\leq d}(\mod A)$; see e.g. \cite[Proposition 9.1.2]{EJ}.

\begin{theorem}\label{theorem cotorsion pair}
Let $A$ be a $d$-Gorenstein algebra. Then we have the following.
\begin{itemize}
%\item[(i)] %\text{\cite[Proposition 9.1.2]{EJ}}
\item[(i)]\text{\cite[Corollary 5.10(1)]{AR}} $\Gproj A$ and $\cp^{\leq d}(\mod A)$ are functorially finite in $\mod A$;
\item[(ii)]\text{\cite[Theorem 1.1]{AB}} For any $M\in\mod A$, there exist the following exact sequences
\begin{align}
0\rightarrow Y_M\rightarrow X_M\rightarrow M\rightarrow0,\qquad
0\rightarrow M\rightarrow Y^M\rightarrow X^M\rightarrow0,
\end{align}
such that $X_M,X^M\in\Gproj A$, $Y_M\in\cp^{\leq d-1}(\mod A)$, $Y^M\in \cp^{\leq d}(\mod A)$.
\end{itemize}
\end{theorem}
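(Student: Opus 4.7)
The plan is to establish the three parts in the order (i), (iii), (ii), using (iii) as the main input for (ii).

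For (i), the inclusion $\cp^{\leq d}(\mod A)\subseteq\cp^{<\infty}(\mod A)$ is tautological, so I would concentrate on the reverse. Given $M$ with $\pd M<\infty$, Theorem \ref{theorem characterize of gorenstein property}(b) puts $\Omega^d M\in\Gproj A$, so it suffices to show the auxiliary claim that any Gorenstein projective module of finite projective dimension is projective. If $G\in\Gproj A$ had a minimal projective resolution of length $n\geq 1$, then $\Omega^{n-1}G$ is again Gorenstein projective (being a syzygy of a Gorenstein projective), and Lemma \ref{lemma property of gorenstein projective modules}(ii) yields $\Ext^1_A(\Omega^{n-1}G,P_n)=0$; the final short exact sequence of the resolution therefore splits, contradicting minimality. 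Hence $\Omega^d M$ is projective and $\pd M\leq d$.

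For (iii), I would construct the first sequence by splicing a coresolution of $K=\Omega^d M\in\Gproj A$ into the $d$-th syzygy sequence $0\to K\to P_{d-1}\to\cdots\to P_0\to M\to 0$. By Lemma \ref{lemma property of gorenstein projective modules}(i), $K$ admits a coresolution $0\to K\to Q^0\to Q^1\to\cdots$ whose cokernels stay in $\Gproj A$; truncating after $d$ steps and performing iterated pushouts along the syzygies of $M$ produces a short exact sequence $0\to Y_M\to X_M\to M\to 0$ in which $Y_M$ is manifestly of projective dimension at most $d$ (it is assembled out of the $Q^i\in\proj A$) and $X_M$ is shown to be Gorenstein projective using the $\Hom_A(-,\proj A)$-exactness supplied by Lemma \ref{lemma property of gorenstein projective modules}(iii) together with closure of $\Gproj A$ under extensions. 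The second sequence arises by a dual construction, moving in the opposite direction along the totally acyclic complex of $K$, where the chosen truncation yields a middle term of projective dimension at most $d-1$.

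For (ii), the first sequence is automatically a right $\Gproj A$-approximation of $M$: for any $G\in\Gproj A$, Lemma \ref{lemma property of gorenstein projective modules}(ii) gives $\Ext^1_A(G,Y_M)=0$ because $Y_M\in\cp^{\leq d}(\mod A)$, so every morphism $G\to M$ lifts through $X_M\to M$. The second sequence is a left $\cp^{\leq d-1}(\mod A)$-approximation of $M$, hence a fortiori a left $\cp^{\leq d}(\mod A)$-approximation. The remaining halves of functorial finiteness follow by applying the Auslander-Reiten duality for resolving subcategories to the cotorsion pair $(\Gproj A,\cp^{\leq d}(\mod A))$ produced above.

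The main obstacle is the pushout construction in (iii). One must arrange the coresolution of $K$ compatibly with the $d$-step syzygy sequence of $M$ and verify at each step that the middle term remains in $\Gproj A$ rather than only in the extension closure of $\Gproj A$ by $\cp^{<\infty}(\mod A)$; the key ingredient here is Lemma \ref{lemma property of gorenstein projective modules}(iii), which ensures the relevant truncations stay $\Hom_A(-,\proj A)$-exact so that each pushout preserves Gorenstein projectivity of the middle term throughout the iteration.
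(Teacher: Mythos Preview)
The paper does not prove this theorem; it is quoted from the literature (Auslander--Buchweitz \cite{AB} and Beligiannis \cite{Be}) and stated without argument. So there is no in-paper proof to compare against. Your outline follows the standard Auslander--Buchweitz construction, and parts (i) and the first sequence in (iii) are essentially correct as written.

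Two places deserve tightening. For the second sequence in (iii), your phrase ``dual construction, moving in the opposite direction along the totally acyclic complex'' hides the actual step: one takes the already-constructed first sequence $0\to Y_M\to X_M\to M\to 0$ together with a short exact sequence $0\to X_M\to Q\to G\to 0$ (with $Q$ projective, $G\in\Gproj A$) and forms the pushout along $X_M\to M$ and $X_M\to Q$; the resulting $0\to M\to Y^M\to G\to 0$ has $Y^M$ sitting in $0\to Y_M\to Q\to Y^M\to 0$, which controls $\pd Y^M$. You should make this explicit and track the bound carefully; depending on how sharp a bound you obtain for $Y_M$ in the iterated pushout, the bound for $Y^M$ may come out as $\leq d$ rather than $\leq d-1$, so you should check the arithmetic of the iteration against the precise claim.

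For (ii), your appeal to ``Auslander--Reiten duality for resolving subcategories'' for the remaining two halves of functorial finiteness is too vague to count as a proof step. Over a finite-dimensional algebra one can argue as follows: $\cp^{\leq d}(\mod A)=\cp^{<\infty}(\mod A)$ coincides with the modules of finite injective dimension (this uses the Gorenstein hypothesis), and applying the constructions of (iii) to $A^{\mathrm{op}}$ and dualising gives right $\cp^{\leq d}$-approximations; similarly one obtains left $\Gproj A$-approximations. Alternatively one may invoke the Auslander--Smal{\o}/Krause--Solberg results on extension-closed contravariantly finite subcategories of $\mod A$, but then you should cite the specific statement rather than a slogan.
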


%In fact, for a $d$-Gorenstein algebra $A$, $(\Gproj A,\cp^{\leq d}(\mod A))$ is a \emph{(complete) cotorsion pair} in $\mod A$. Here the notion of cotorsion pairs is defined in \cite{Sa}, for complete cotorsion pairs, see \cite{EJ,GT}.

Similarly, for graded algebras, one can define the notions of \emph{graded Gorenstein algebras}, \emph{graded Gorenstein projective modules}.
We denote by $\Gproj^\Z A$ the full subcategory of $\mod^\Z A$ formed by all $\Z$-graded Gorenstein projective modules.

For a positively graded algebra $A$, every (finitely generated) projective modules and injective modules are \emph{gradable}; see \cite[Corollary 3.4]{GG1}. Let $F:\mod^\Z A\rightarrow \mod A$ be the forgetful functor, then for any $M\in\mod^\Z A$, $M$ is graded projective (resp. graded injective) if and only if $F(M)$ is projective (resp. injective) as $A$-module; see \cite[Proposition 1.3, Proposition 1.4]{GG}. In fact,
one has $\gid_A A=\id_A A$ (see \cite[Lemma 3.4]{Le}), where $\gid_A M$ denotes the injective dimension of $M$ in $\mod^\Z A$. %, which is called the \emph{graded injective dimension }of $M$.
Therefore, the positively graded algebra $A$ is graded Gorenstein if and only if it is Gorenstein as an ungraded algebra.
%Certainly, for a finite-dimensional positively graded algebra $A=\bigoplus_{i\geq0} A_i$ which we mainly focus on, $A$ is $d$-graded Gorenstein if and only if $A$ is $d$-Gorenstein.
So we do not distinguish them in the following.

Let $A$ be a graded Gorenstein algebra. A graded $A$-module $M$ is called \emph{graded (maximal) Cohen-Macaulay} if $M$ satisfies that
\begin{align}
\Ext^i_{\mod^\Z A}(M,A(j))=0\mbox{ for }i\neq0,j\in \Z.
\end{align}
We denote by $\CM^\Z(A)$ the full subcategory of $\mod^\Z A$ formed by all $\Z$-graded Cohen-Macaulay modules.
Since for arbitrary graded algebra, every \emph{graded projective module} is a direct summand of a \emph{graded free module}, we get that $\CM^\Z(A)=\,^\bot\proj^\Z (A)$.
It is worth noting that the graded version of Theorem \ref{theorem characterize of gorenstein property} and Theorem \ref{theorem cotorsion pair} are also valid. In particular, $\Gproj^\Z A=\CM^\Z (A)$.%= \,^\bot\proj^\Z (A)$

\begin{lemma}\label{lemma forgetful functor}
Let $A=\bigoplus_{i\geq0} A_i$ be a graded algebra. Then the forgetful functor $F:\mod^\Z A\rightarrow \mod A$ induces a functor from $\Gproj^\Z A$ to $\Gproj A$, which is also denoted by $F$.
\end{lemma}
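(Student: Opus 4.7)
The plan is to lift the totally acyclic complex witnessing that $M\in\Gproj^\Z A$ to one witnessing that $F(M)\in\Gproj A$, using the forgetful functor termwise. Functoriality will then come for free, because any graded morphism $f\colon M\to N$ of degree zero is, after forgetting the grading, an honest $A$-linear map $F(f)\colon F(M)\to F(N)$, and composition is preserved.

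First I fix $M\in\Gproj^\Z A$ and choose a totally acyclic complex
$$P^\bullet:\cdots\rightarrow P^{-1}\rightarrow P^0\xrightarrow{d^0} P^1\rightarrow\cdots$$
of finitely generated graded projective $A$-modules with $M\cong \Ker d^0$ in $\mod^\Z A$. By \cite[Proposition 1.3, Proposition 1.4]{GG} every graded projective module is projective as an ungraded $A$-module, so each $F(P^i)$ lies in $\proj A$. Since the forgetful functor $F\colon\mod^\Z A\to\mod A$ is exact, $F(P^\bullet)$ is an acyclic complex of finitely generated projective $A$-modules, and
$$\Ker F(d^0)=F(\Ker d^0)\cong F(M).$$

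The only real point to check is that $\Hom_A(F(P^\bullet),A)$ is acyclic. Here I invoke the decomposition (\ref{equation morphism}): because each $P^i$ is finitely generated as a graded module, for every $i$ one has a natural isomorphism of complexes of $k$-vector spaces
$$\Hom_A(F(P^\bullet),A)\;\cong\;\bigoplus_{j\in\Z}\Hom_{\mod^\Z A}\bigl(P^\bullet,A(j)\bigr).$$
Since $A(j)\in\proj^\Z A$ for every $j\in\Z$ and $P^\bullet$ is totally acyclic in the graded sense, each summand $\Hom_{\mod^\Z A}(P^\bullet,A(j))$ is acyclic; a direct sum of acyclic complexes of $k$-vector spaces is again acyclic, hence so is the left hand side. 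Therefore $F(P^\bullet)$ is totally acyclic in $\mod A$, and $F(M)\in\Gproj A$ as required.

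I expect no serious obstacle: the argument is essentially an unwinding of the definition of (total) acyclicity. The only mildly delicate step is the $\Hom$-decomposition above, where one must remember that the direct sum (rather than direct product) description is valid precisely because the source modules $P^i$ are finitely generated, so any $A$-linear map out of $P^i$ has bounded spread in degree.
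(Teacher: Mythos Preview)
Your proof is correct and follows essentially the same idea as the paper: both arguments hinge on the decomposition $\Hom_A(X,Y)=\bigoplus_{j\in\Z}\Hom_{\mod^\Z A}(X,Y(j))$ from (\ref{equation morphism}). The only difference in packaging is that the paper applies this decomposition to a graded projective resolution of $M$ to derive $\Ext^i_A(M,N)\cong\bigoplus_{j\in\Z}\Ext^i_{\mod^\Z A}(M,N(j))$ and then uses the $^\perp A$-characterization of Gorenstein projectives (Lemma~\ref{lemma property of gorenstein projective modules}(i)), whereas you apply the decomposition directly to the totally acyclic complex itself to verify the definition; the content is the same.
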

\begin{proof}
Since $F$ is exact and maps graded projective modules to projective modules, we only need to prove that $F$ induces a functor from $\,^\bot\proj^\Z (A)$ to $\,^\bot\proj A$.

For any $M,N\in \mod^\Z A$, take a projective resolution of $M$:
$$\cdots\xrightarrow{f_{i+1}} P_i\xrightarrow{f_i} \cdots \xrightarrow{f_2} P_1\xrightarrow{f_1}P_0\xrightarrow{f_0}M\rightarrow 0$$ in $\mod^\Z A$ with $P_i$ graded projective. Then for any $j\in\Z$, we get an exact sequence:
$$\Hom_{\mod^\Z A}(P_0,N(j))\rightarrow\Hom_{\mod^\Z A}( \ker f_{0},N(j))\rightarrow\Ext^1_{\mod^\Z A}(M,N(j))\rightarrow0.$$
Proposition \ref{proposition characterize of simple porjective injective modules of graded algebras} implies that $P_i$ is projective in $\mod A$ for any $i$, and then there exists an exact sequence:
$$\Hom_{A}(P_0,N(j))\rightarrow\Hom_{A}( \ker f_{0},N(j))\rightarrow\Ext^1_{A}(M,N(j))\rightarrow0.$$
From (\ref{equation morphism}), we get that
$\Hom_{A}(P_0,N(j))=\bigoplus_{i\in\Z}\Hom_{\mod^\Z A}(P_0,N(i+j))$ and
$$\Hom_{A}(\ker f_0,N(j))=\bigoplus_{i\in\Z}\Hom_{\mod^\Z A}(\ker f_0,N(i+j)).$$
Then
$$\Ext^1_{A}(M,N(j))\cong  \bigoplus_{i\in\Z}\Ext^1_{\mod^\Z A}(M,N(i+j)).$$
By induction, one can check that $$\Ext^k_{A}(M,N(j))\cong  \bigoplus_{i\in\Z}\Ext^k_{\mod^\Z A}(M,N(i+j)),\forall k.$$

Our desired result follows from the above computations.
\end{proof}

\begin{definition}[\text{\cite[Definition 1.2.2]{Bu}; see also \cite[Subsection 4.1]{Ha3}, \cite[Definition 1.8]{Or1}}]
Let $A=\bigoplus_{i\geq0}A_i$ be a graded algebra. The singularity category is defined to be the Verdier localization
$$D_{sg}(A):=D_{sg}(\mod A):=D^b(\mod A)/K^b(\proj A),$$
the $\Z$-graded singularity category is
$$D_{sg}(\mod^\Z A):=D^b(\mod^\Z A)/K^b(\proj^\Z A),$$
We denote by $\pi:D^b(\mod A)\rightarrow D_{sg}(\mod A)$ and $\pi^\Z:D^b(\mod^\Z A)\rightarrow D_{sg}(\mod ^\Z A)$
the localization functor.
\end{definition}

\begin{theorem}[Buchweitz's Theorem, \text{\cite[Theorem 4.4.1]{Bu}}] %see also \cite{KV2} for a more general version]
\label{theorem Buchweitz theorem}
Let $A$ be a graded algebra. Then
\begin{itemize}
\item[(i)] $\Gproj A$ and $\Gproj^\Z A$ are Frobenius categories with the projective modules and $\Z$-graded projective modules as the projective-injective objects respectively.

\item[(ii)] There is an exact embedding $\Phi:\underline{\Gproj}A\rightarrow D_{sg}(\mod A)$ given by $\Phi(M)=M$, where the second $M$ is the corresponding stalk complex at degree $0$, and $\Phi$ is an equivalence if and only if $A$ is Gorenstein.

\item[(iii)] There is an exact embedding $\Phi^\Z:\underline{\Gproj}^\Z A\rightarrow D_{sg}(\mod^\Z A)$ given by $\Phi^\Z(M)=M$, where the second $M$ is the corresponding stalk complex at degree $0$, and $\Phi^\Z$ is an equivalence if and only if $A$ is (graded) Gorenstein.
\end{itemize}
\end{theorem}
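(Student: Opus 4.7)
The plan is to verify the three items in turn, leaning on the syzygy and cotorsion structure already recorded in Lemma \ref{lemma property of gorenstein projective modules} and Theorem \ref{theorem cotorsion pair}. For part (i), I would check the three axioms of a Frobenius exact category on $\Gproj A$. Closure under extensions and kernels of epimorphisms follows directly from the Ext-vanishing characterisation $\Gproj A = {}^\bot A$. For any $M\in\Gproj A$, Lemma \ref{lemma property of gorenstein projective modules}(i) provides a short exact sequence $0\to M\to P\to M'\to 0$ with $P$ projective and $M'\in\Gproj A$, while an ordinary syzygy sequence $0\to \Omega M\to Q\to M\to 0$ with $Q$ projective forces $\Omega M\in\Gproj A$ by the Ext-criterion applied to a long exact sequence. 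Thus $\Gproj A$ has enough projectives and enough injectives, and Lemma \ref{lemma property of gorenstein projective modules}(ii) identifies both classes with $\proj A$, yielding the Frobenius structure. The same argument, replacing $\mod A$ by $\mod^\Z A$ and using the graded version of Lemma \ref{lemma property of gorenstein projective modules}, handles $\Gproj^\Z A$.

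For part (ii), I would first check that $\Phi$ is well-defined: a map $f\colon M\to N$ in $\Gproj A$ that factors through a projective $P$ becomes zero in $D_{sg}(\mod A)$ because $P\in K^b(\proj A)$ is killed by the Verdier quotient. It is a triangle functor because conflations in the Frobenius category $\Gproj A$ yield distinguished triangles under $\Phi$: each such sequence is a short exact sequence in $\mod A$, hence a triangle in $D^b(\mod A)$, hence in $D_{sg}(\mod A)$, and the suspension in $\underline{\Gproj}A$ is given by the cosyzygy, which matches the shift $[1]$ in $D_{sg}$ (as the conflation $0\to M\to P\to \Sigma M\to 0$ becomes an iso $\Sigma M\simeq M[1]$ after killing $P$). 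Full faithfulness is the main technical point. The idea is to use that every $M\in\Gproj A$ admits a totally acyclic resolution $P^\bullet$, so that any roof $M\leftarrow X\to N[n]$ in the calculus of fractions defining $D_{sg}$ can be lifted to a chain map from a hard-truncation of $P^\bullet$ to $N$. Two such chain maps differ by a chain homotopy up to a map factoring through a projective, yielding a natural isomorphism
\[
\Hom_{D_{sg}(\mod A)}(M,N[n])\;\cong\;\underline{\Ext}{}^{\,n}_A(M,N):=H^n\,\underline{\Hom}_A(P^\bullet,N)
\]
for $M,N\in\Gproj A$, which for $n=0$ reduces to $\underline{\Hom}_A(M,N)$.

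Essential surjectivity is where the Gorenstein hypothesis enters. Assuming $A$ is $d$-Gorenstein, any object in $D_{sg}(\mod A)$ is isomorphic to some $M[n]$ for $M\in\mod A$, and Theorem \ref{theorem cotorsion pair}(iii) gives a conflation $0\to Y_M\to X_M\to M\to 0$ with $X_M\in\Gproj A$ and $Y_M\in\cp^{\leq d}(\mod A)$; since $Y_M\in K^b(\proj A)$ it dies in the quotient, so $M\simeq X_M=\Phi(X_M)$. Conversely, if $\Phi$ is an equivalence, then every simple $A$-module $S$ is isomorphic in $D_{sg}(\mod A)$ to a Gorenstein projective $X_S$; the cone of the representing morphism lies in $K^b(\proj A)$, giving a finite projective resolution of the syzygy of $S$ relative to $X_S$, which forces $\pd_A D(A_A)<\infty$ and $\id A_A<\infty$ by a standard dualisation. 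Hence $A$ is Gorenstein. Part (iii) is proved by the same scheme applied verbatim inside $\mod^\Z A$, using the graded enhancements of Lemma \ref{lemma property of gorenstein projective modules} and Theorem \ref{theorem cotorsion pair}; the forgetful compatibility supplied by Lemma \ref{lemma forgetful functor} guarantees that the graded and ungraded notions of Gorenstein projectivity agree, so no separate argument is required. The principal obstacle is the full-faithfulness calculation; the rest is bookkeeping around the Frobenius structure and the cotorsion pair.
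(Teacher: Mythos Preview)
The paper does not supply its own proof of this theorem: it is quoted as Buchweitz's Theorem with a reference to \cite{Bu} (and \cite{KV2}) and used as background input for the rest of the article. So there is no in-paper argument to compare your proposal against.

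That said, your outline is essentially the standard Buchweitz--Happel argument and would serve as a correct sketch for (i) and the ``if'' direction of (ii)/(iii). The one place where your write-up is too thin is the converse in (ii): from ``every simple $S$ is isomorphic in $D_{sg}$ to some $X_S\in\Gproj A$'' you jump to ``$\pd_A D(A_A)<\infty$ and $\id A_A<\infty$ by a standard dualisation''. An isomorphism in $D_{sg}$ only gives a roof whose cones are perfect; turning that into the statement that every module has finite Gorenstein dimension (equivalently, that $\Omega^n S\in\Gproj A$ for some $n$) requires an additional argument, and the passage from there to $\id A_A<\infty$ is not a mere ``dualisation'' --- one typically shows that if every module has Gorenstein dimension $\le d$ then $A$ is $d$-Gorenstein (see e.g.\ \cite{Be} or \cite{Ha3}). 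If you intend to include a proof, that step should be made explicit; otherwise, citing \cite{Bu,Ha3} as the paper does is the honest route.
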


\section{Existence of silting objects in $D_{sg}(\mod^\Z A)$%for $1$-Gorenstein positively graded algebras
}\label{section 3}
In this section, we always assume that $A$ is a positively graded Gorenstein algebra.

First, we recall the definition of tilting objects and silting objects for a triangulated category.
Let $\ct$ be a triangulated category. An object $T\in\ct$ is called \emph{tilting} if it satisfies the following conditions:
\begin{itemize}
\item[(a)] $\Hom_\ct(T,T[i])=0$ for any $i\neq0$.
\item[(b)] $\ct=\thick_\ct T$.
\end{itemize}

Recall that an object $T$ of $\ct$ is called a \emph{silting object}  if it generates $\ct$ and
$\Hom_\ct(T,T[i])=0$ for any $i>0$; see \cite[Subsection 5.1]{KV} and \cite[Definition 2.1]{AI}.

Let $\ct$ be an algebraic triangulated Krull-Schmidt category. If $\ct$ has a tilting object $T$, then there exists a triangle equivalence $\ct\simeq K^b(\proj(\End_\ct(T)^{op}))$; see \cite[Theorem 8.7]{Ke1}.

In this section, we prove that there exists a silting object in $D_{sg}(\mod^\Z A)$ for a positively graded $1$-Gorenstein algebra $A$. First, similar to \cite[Theorem 3.1]{Ya}, we get the following lemma.
\begin{lemma}\label{lemma finite global dimension}
Let $A$ be a positively graded Gorenstein algebra.
If $D_{sg}(\mod^\Z A)$ has tilting objects, then $A_0$ has finite global dimension.
\end{lemma}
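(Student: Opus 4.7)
My plan is to prove the contrapositive: assuming $\gldim A_0 = \infty$, I will show that $D_{sg}(\mod^\Z A)$ admits no tilting object. The approach is inspired by Yamaura's argument \cite{Ya} in the self-injective case, adapted to the $d$-Gorenstein setting via the Buchweitz equivalence $\Phi^\Z : \underline{\Gproj}^\Z A \xrightarrow{\sim} D_{sg}(\mod^\Z A)$ of Theorem \ref{theorem Buchweitz theorem}(iii).

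First, I would produce infinitely many pairwise non-isomorphic non-zero objects in $D_{sg}(\mod^\Z A)$ from the hypothesis $\gldim A_0 = \infty$. Pick a simple $A_0$-module $S$ with $\pd_{A_0} S = \infty$, and view $S$ as a graded $A$-module concentrated in degree $0$ via the surjection $A \twoheadrightarrow A/A_{\geq 1} \cong A_0$. The claim is that $\pd_{\mod^\Z A} S = \infty$, hence by Theorem \ref{theorem characterize of gorenstein property} the syzygy $\Omega^d S \in \Gproj^\Z A$ is non-projective, so $S \neq 0$ in $D_{sg}(\mod^\Z A)$. To see the claim, inspect the minimal graded projective resolution of $S$: by Proposition \ref{proposition characterize of simple porjective injective modules of graded algebras}, its $0$-th term is $Ae$ (where $e \in \overline{PI}$ gives $S$), and a direct computation shows that the degree-$0$ component of the graded top of each successive syzygy $\Omega^i_{\mod^\Z A} S$ recovers the $A_0$-top of $\Omega^i_{A_0} S$. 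Hence the graded resolution cannot terminate once the $A_0$-resolution does not. The grade-shifts $\{S(i)\}_{i\in\Z}$ then yield pairwise non-isomorphic non-zero objects of $D_{sg}(\mod^\Z A)$, distinguished by the degree in which their unique composition factor lives.

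Second, I would use the hypothesized tilting object $T$. By \cite{Ke1} it yields a triangle equivalence $D_{sg}(\mod^\Z A) \simeq K^b(\proj \End_{D_{sg}}(T))$, where $\End_{D_{sg}}(T)$ is finite-dimensional with finite global dimension and only finitely many indecomposable projectives up to isomorphism. After fixing graded-module representatives, the indecomposable summands of $T$ lie in some bounded interval of degrees. Positivity of the grading together with the $d$-Gorenstein property then forces, by a direct $\Hom$-computation in $\underline{\Gproj}^\Z A$, the vanishing $\Hom_{D_{sg}}(T, T(i)[j]) = 0$ for every $j$ whenever $|i|$ is sufficiently large.

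The main obstacle, and the heart of the argument, is to combine these two ingredients into a contradiction. Because $T$ generates $D_{sg}(\mod^\Z A)$ and $\End_{D_{sg}}(T)$ has finite global dimension, each $S(i)$ admits a finite-length description in terms of shifts of $T$, with ``multiplicities'' recorded by the finite-dimensional $\Hom$-spaces $\Hom_{D_{sg}}(T, S(i)[j])$. The plan is to show, by tracking how the degree-shift autoequivalence $(1)$ interacts with the tilting-generated description and invoking the Yamaura-type vanishing above, that these $\Hom$-spaces must vanish for all $j$ once $i$ is sufficiently large. This would force $S(i) = 0$ in $D_{sg}(\mod^\Z A)$ for $i \gg 0$, contradicting the non-vanishing established in the first step. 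The delicate point, and what I expect to be the technical heart of the proof, is the precise bookkeeping of how the degree-shift autoequivalence propagates through the triangulated description of $S(i)$, so that the vanishing at the level of $T$ can be transferred to the vanishing at the level of $S(i)$.
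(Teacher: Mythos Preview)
Your first step---producing a graded $A$-module of infinite projective dimension from a simple $A_0$-module $S$ with $\pd_{A_0} S = \infty$, hence a nonzero object of $D_{sg}(\mod^\Z A)$---is correct and essentially matches the paper (which uses $A_0/J_{A_0}$ rather than a single simple).

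The genuine gap is in your second and third steps, where you work with the \emph{grade shift} $(i)$ rather than the \emph{triangulated shift} $\langle i\rangle$. Your claimed vanishing $\Hom_{D_{sg}}(T, T(i)[j]) = 0$ for all $j$ once $|i| \gg 0$ is false: since $(i)$ is an autoequivalence of $D_{sg}(\mod^\Z A)$, this would force $T(i) = 0$ and hence $T = 0$. Concretely, take $A = k[x]/(x^2)$ with $\deg x = 1$; then $\underline{\Gproj}^\Z A \simeq D^b(k)$ with tilting object $T = k$, and $\Omega(k) \cong k(-1)$, so $k(i) \cong k[-i]$ in the stable category and $\Hom(T, T(i)[i]) = \Hom(k,k) \neq 0$ for every $i$. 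The point is that as $j$ varies, the cosyzygies $\Omega^{-j}(T)$ sweep through arbitrary degree ranges, so no grade shift can make $T(i)$ orthogonal to $T$ in all triangulated shifts simultaneously. Your third step then cannot go through either: grade-shifting a nonzero object never makes it zero.

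The paper instead exploits the \emph{triangulated} shift. If $D_{sg}(\mod^\Z A) \simeq K^b(\proj B)$ via a tilting object, then for any fixed objects $X, Y$ one has $\Hom(X, Y\langle i\rangle) = 0$ for $|i| \gg 0$, simply because bounded complexes of projectives have bounded amplitude. The paper shows that $X = A_0/J_{A_0}$ violates this: positivity of the grading gives $\Ext^i_{\mod^\Z A}(A_0/J_{A_0}, A_0/J_{A_0}) = \Ext^i_{A_0}(A_0/J_{A_0}, A_0/J_{A_0})$, and after passing to $\Omega^d(A_0/J_{A_0}) \in \Gproj^\Z A$ and dimension-shifting, one obtains $\Hom_{D_{sg}}(X, X\langle i\rangle) \cong \Ext^i_{A_0}(A_0/J_{A_0}, A_0/J_{A_0}) \neq 0$ for all $i > d$. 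No infinite family of grade-shifted objects is needed---a single object with non-vanishing self-extensions in arbitrarily high degree already contradicts the existence of a tilting object.
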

\begin{proof}
We assume that $A$ is a $d$-Gorenstein algebra.

Suppose for a contradiction that $D_{sg}(\mod^\Z A)$ has a tilting object $T$ and $\gldim A_0=\infty$.
As $A$ is positively graded, for any $A_0$-module $X$, the degree $0$ part of a projective resolution of $X$ in $\mod^\Z A$ gives a projective resolution of $X$ in $\mod A_0$.
So we have $$\Ext_{\mod^\Z A}^i(A_0/J_{A_0},A_0/J_{A_0})=\Ext_{A_0}^i(A_0/J_{A_0},A_0/J_{A_0})$$ for any $i>0$.
Because $\gldim A_0=\infty$, the $A_0$-module $A_0/J_{A_0}$ has infinite projective dimension. Thus we have
$$\Ext_{A_0}^i(A_0/J_{A_0},A_0/J_{A_0})\neq0\mbox{ for any }i>0.$$

Denote by $\Omega$ the syzygy translation of $\mod^\Z A$. As $A$ is a $d$-Gorenstein algebra, it follows from the graded version of Theorem \ref{theorem characterize of gorenstein property} that $\Omega^j( A_0/J_{A_0})$ is a graded Gorenstein projective $A$-module for any $j\geq d$. Let
$$\cdots\rightarrow Q_{d} \xrightarrow{f_d} Q_{d-1} \xrightarrow{f_{d-1}} \cdots\xrightarrow{f_1} Q_0\xrightarrow{f_0} A_0/J_{A_0}\rightarrow0$$
be a projective resolution of $A_0/J_{A_0}$ as graded $A$-module. Let $\Sigma$ be the shift functor of $D^b(\mod^\Z A)$, and $\langle1\rangle$ be the shift functor of $D_{sg}(\mod^\Z A)$ which is induced by $\Sigma$.
It is not hard to see that $A_0/J_{A_0}\simeq \Omega^j(A_0/J_{A_0})\langle j\rangle$ for any $j\geq0$ in $D_{sg}(\mod^\Z A)$. In particular, $A_0/J_{A_0}\simeq \Omega^d(A_0/J_{A_0})\langle d\rangle$. So
\begin{align*}
 \Hom_{D_{sg}(\mod^\Z A)}(A_0/J_{A_0}, (A_0/J_{A_0})\langle i\rangle)
=& \Hom_{D_{sg}(\mod^\Z A)}(\Omega^d(A_0/J_{A_0})\langle d\rangle, \Omega^d(A_0/J_{A_0})\langle d+i\rangle)\\
=& \Hom_{D_{sg}(\mod^\Z A)}(\Omega^d(A_0/J_{A_0}), \Omega^d(A_0/J_{A_0})\langle i\rangle),
\end{align*}
for any $i$.
Buchweitz's Theorem shows that $\underline{\Gproj}^\Z(A) \simeq D_{sg}(\mod^\Z A)$. As $\underline{\Gproj}^\Z(A)$ is a full subcategory of $\underline{\mod}^\Z(A)$, we have
\begin{align*}
\Hom_{D_{sg}(\mod^\Z A)}(\Omega^d(A_0/J_{A_0}), \Omega^d(A_0/J_{A_0})\langle i\rangle)
&=\Hom_{\underline{\Gproj}^\Z(A)}(\Omega^d(A_0/J_{A_0}), \Omega^d(A_0/J_{A_0})[i])\\
&=\Hom_{\underline{\mod}^\Z(A)}(\Omega^d(A_0/J_{A_0}), \Omega^d(A_0/J_{A_0})[i]),
\end{align*}
where $[1]$ is the shift functor of $\underline{\Gproj}^\Z(A)$ given by the cosyzygy functor.
Since $\Omega^d(A_0/J_{A_0})$ is a graded Gorenstein projective $A$-module, there exists an exact sequence
$$0\rightarrow \Omega^d(A_0/J_{A_0}) \xrightarrow{g_0} P_0 \xrightarrow{g_1} P_1\xrightarrow{g_2}\cdots\rightarrow P_i\rightarrow\cdots$$
with $P_i$ graded projective and $\coker (g_i)$ graded Gorenstein projective for any $i\geq0$.
From $$\Ext^i_{\mod^\Z A}(\Omega^d(A_0/J_{A_0}),P)=0$$ for any projective module $P$ and $i>0$, we deduce that
\begin{eqnarray*}
&&\Hom_{\underline{\mod}^\Z(A)}(\Omega^d(A_0/J_{A_0}), \Omega^d(A_0/J_{A_0})[i])=\Ext^i_{\mod^\Z A}(\Omega^d(A_0/J_{A_0}), \Omega^d(A_0/J_{A_0}) ).
\end{eqnarray*}

Similarly, for any $i>d$,
\begin{align*}
\Ext^i_{\mod^\Z A}(\Omega^d(A_0/J_{A_0}), \Omega^d(A_0/J_{A_0}) )
=&\Ext^{i-1}_{\mod^\Z A}(\Omega^d(A_0/J_{A_0}), \Omega^{d-1}(A_0/J_{A_0}) )\\
=&\Ext^{i-d}_{\mod^\Z A}(\Omega^d(A_0/J_{A_0}), A_0/J_{A_0})\\
=&\Ext^{i-d+1}_{\mod^\Z A}(\Omega^{d-1}(A_0/J_{A_0}), A_0/J_{A_0})\\
=&\Ext^{i}_{\mod^\Z A}(A_0/J_{A_0}, A_0/J_{A_0}).
\end{align*}
So
\begin{align*}
\Hom_{D_{sg}(\mod^\Z A)}(A_0/J_{A_0}, (A_0/J_{A_0})\langle i\rangle)
=&\Hom_{D_{sg}(\mod^\Z A)}(\Omega^d (A_0/J_{A_0})\langle d\rangle, \Omega^d(A_0/J_{A_0})\langle d+ i\rangle)\\
=&\Hom_{D_{sg}(\mod^\Z A)}(\Omega^d (A_0/J_{A_0}), \Omega^d(A_0/J_{A_0})\langle i\rangle)\\
=&\Hom_{ \underline{\Gproj}^\Z(A)}(\Omega^d (A_0/J_{A_0}), \Omega^d(A_0/J_{A_0})[i])\\
=&\Ext^{i}_{\mod^\Z A}(A_0/J_{A_0}, A_0/J_{A_0})\\
=&\Ext_{A_0}^i(A_0/J_{A_0},A_0/J_{A_0})\neq0
\end{align*}
for any $i>d$.
However, as $D_{sg}(\mod^\Z A)$ has a tilting object, \cite[Proposition 2.4]{AI} shows that
$\Hom_{D_{sg}(\mod^\Z A)}(X,Y\langle i\rangle)=0$ holds for any $X,Y\in D_{sg}(\mod^\Z A)$ and $|i|\gg0$. This gives a contradiction. Then  $A_0$ has finite global dimension.
\end{proof}

Let $T$ be the graded $A$-module $T:=\bigoplus_{i\geq0} A(i)_{\leq0}$ defined as in (\ref{def:T}).
Since $A(i)_{\leq 0}=A(i)$ is zero in $D_{sg}(\mod^\Z A)$ for sufficiently large $i$, we can regard $T$ as an object in $D_{sg}(\mod^\Z A)$.

\begin{lemma}\label{lemma T is a generator}
Let $A$ be a positively graded Gorenstein algebra.
If $A_0$ has finite global dimension, then we have $D_{sg}(\mod^\Z A)=\thick T$, where $T= \bigoplus_{i\geq0} A(i)_{\leq0}$.
\end{lemma}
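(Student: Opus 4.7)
The plan is to show that $\thick T$ contains every $A_0$-module placed in any fixed graded degree. Since any finitely generated graded $A$-module is a finite iterated extension of such concentrated-in-one-degree modules and $D_{sg}(\mod^\Z A)$ is generated as a thick subcategory by $\mod^\Z A$, this will give the equality $\thick T=D_{sg}(\mod^\Z A)$. Using $\gldim A_0<\infty$, the task reduces further to showing $A_0(j)\in\thick T$ for every $j\in\Z$, where $A_0(j)$ denotes $A_0$ concentrated in graded degree $-j$ with $A_{\geq 1}$ acting trivially: indeed, for an arbitrary $A_0$-module $M$, a finite $A_0$-projective resolution placed in degree $-j$ gives a bounded complex in $\add A_0(j)\subset\thick T$ quasi-isomorphic to $M$ placed in that degree.

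For $j\geq 0$ I induct on $j$, with base case $T_0=A_0(0)\in T$. For the inductive step I analyze $T_{j+1}=A(j+1)_{\leq 0}$ via the short exact sequence
$$0\to (T_{j+1})_{\geq -j}\to T_{j+1}\to A_0(j+1)\to 0$$
in $\mod^\Z A$. The submodule $(T_{j+1})_{\geq -j}$ has a degree-wise filtration whose successive quotients are $A_k$, viewed as an $A_0$-module, placed in degree $k-j-1$ for $k=1,\ldots,j+1$. These degrees all lie in $\{-j,\ldots,0\}$, so by the inductive hypothesis combined with the reduction of the first paragraph each quotient belongs to $\thick T$; hence $(T_{j+1})_{\geq -j}\in\thick T$ and the displayed sequence forces $A_0(j+1)\in\thick T$.

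For $j<0$, write $n=-j\geq 1$; here the Gorenstein hypothesis enters decisively. For each primitive idempotent $e\in A_0$ the indecomposable graded injective $I_e$ corresponding to $e$ has support in degrees $\leq 0$, and from $\Gd A\leq d$ one has $\pd I_e\leq d$ in $\mod^\Z A$, so $I_e(-n)\in K^b(\proj^\Z A)$ and $I_e(-n)=0$ in $D_{sg}(\mod^\Z A)$; in particular it lies trivially in $\thick T$. I induct on $n\geq 1$. The degree-wise filtration of $I_e(-n)$ has successive quotients that are $A_0$-modules concentrated in degrees strictly less than $n$; each such degree is either $\leq 0$ (handled by the previous paragraph) or in $\{1,\ldots,n-1\}$ (handled by the inductive hypothesis together with the reduction step). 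Thus all successive quotients lie in $\thick T$, and since $I_e(-n)$ itself is $0$ in $D_{sg}$, the remaining top piece---the socle of $I_e(-n)$, which is a simple $A_0$-module placed in degree $n$---must lie in $\thick T$ as well. Letting $e$ range over all primitive idempotents of $A_0$ exhausts the simple $A_0$-modules in degree $n$, and the reduction step then gives $A_0(-n)\in\thick T$, closing the induction.

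The main obstacle is the positive-degree step, since each $T_i$ is supported only in non-positive degrees and no direct filtration argument from $T$ alone can reach $A_0$ concentrated in a positive degree. The key idea is to use the vanishing of shifted indecomposable graded injectives in $D_{sg}(\mod^\Z A)$---available precisely because $A$ is Gorenstein---as the source of the triangles that express a positive-degree simple in terms of $A_0$-modules in smaller degrees already placed in $\thick T$ by the induction.
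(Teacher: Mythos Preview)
Your argument is essentially correct and follows the same strategy as the paper: reduce to showing $A_0(j)\in\thick T$ for all $j\in\Z$, handle $j\geq 0$ by induction using the degree filtration of $A(j)_{\leq 0}$, and handle $j<0$ by exploiting that graded injectives vanish in $D_{sg}$ thanks to the Gorenstein hypothesis. The paper's treatment of the non-negative case is identical to yours.

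There is one small inaccuracy in your negative-degree step. You call the top-degree piece of $I_e(-n)$ ``the socle of $I_e(-n)$, which is a simple $A_0$-module placed in degree $n$''. In fact, $(I_e(-n))_{\geq n}=(D(eA))_0(-n)=D(eA_0)(-n)$, which is the indecomposable \emph{injective} $A_0$-module corresponding to $e$, not the simple one; the socle agrees with this only when $A_0$ is semisimple. The argument survives unchanged: ranging over primitive idempotents $e$ you obtain every indecomposable injective $A_0$-module concentrated in degree $n$, and since $\gldim A_0<\infty$ every $A_0$-module has a finite injective resolution, so $A_0(-n)\in\thick T$ follows.

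For comparison, the paper organises the negative-degree case a little differently. Rather than filtering each $I_e(-n)$ directly, it first builds a single finite coresolution of $A_0$ by graded injectives in $\mod^\Z A$ (obtained by dualising a partial projective resolution of $D(A_0)$ in $\mod^\Z A^{op}$) with remainder $D(X)$ satisfying $D(X)_{\geq 0}=0$; then $A_0(-n)\cong D(X)(-n)\langle -m\rangle$ in $D_{sg}$ for a suitable $m$, and $D(X)(-n)$ is supported entirely in degrees $\leq n-1$, hence lies in $\thick T$ by induction. Your version is slightly more direct since it avoids constructing that coresolution, at the cost of invoking the reduction step once more to pass from injective $A_0$-modules to arbitrary ones.
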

\begin{proof}
The proof is based on that of \cite[Lemma 3.5]{Ya}.

For any object $N$ in $D_{sg}(\mod^\Z A)$, there exists a graded Gorenstein projective $A$-module such that it is isomorphic to $N$ in $D_{sg}(\mod^\Z A)$. So it is enough to show that all the graded $A$-modules are in the subcategory $\thick T$.

We regard $A_0$ as the natural $\Z$-graded factor $A$-module, i.e., $A_0(0)=A(0)_{\leq0}$. Any object in $\mod^\Z(A)$ has a finite filtration by simple objects in $\mod^\Z(A)$ which are given by simple $A_0$-modules concentrated in some degree
by Proposition \ref{proposition characterize of simple porjective injective modules of graded algebras}. Every short exact sequence in $\mod^\Z A$ gives a triangle in $D^b(\mod^\Z A)$, and then a triangle in $D_{sg}(\mod^\Z A)$, so we only need to check that $S(i)\in \thick T$ for any simple $A_0$-module $S$, and any $i\in\Z$.
As $A_0$ has finite global dimension, it is enough to show that $A_0(i)\in \thick T$ for any $i\in\Z$.
We divide the proof into two parts.

(i) We show that $A_0(i)\in \thick T$ for any $i\geq 0$ by induction on $i$. Obviously we have $A_0(0)=A(0)_{\leq0}\in \thick T$.
We assume $A_0(0),\dots,A_0(i-1)\in \thick T$. This implies that $S(0),\dots,S(i-1)\in \thick T$ for any simple $A_0$-module $S$ since $A_0$ has finite global dimension. For any graded $A$-module $N$, it is easy to see that if $N_j=0$ for any $j>0$ and $j<1-i$ then $N\in \thick T$.

Obviously, there exists a short exact sequence
\begin{equation}\label{eqn:short exact sequence}
0\rightarrow (A(i)_{\leq0})_{\geq 1-i} \rightarrow A(i)_{\leq0}\rightarrow A_0(i)\rightarrow0,
\end{equation}
in $\mod^\Z A$. By the above, we have $(A(i)_{\leq0})_{\geq 1-i}\in\thick T$, and then  $A_0(i)\in \thick T$ by (\ref{eqn:short exact sequence}).

(ii) We show that $A_0(-i)\in\thick T$ for any $i\geq1$. Assume $A_0(-j)\in\thick T$ for any $1\leq j<i$. Together with (i), we get that $S(-j)\in \thick T$ for any simple $A_0$-module $S$ and $j<i$ since $A_0$ has finite global dimension. It implies that for any $N\in\mod^\Z A$, if $N_{\geq i}=0$ then $N\in \thick T$.
Denote by $n:=\pd_{A_0^{op}} D(A_0)+1$.
There exists an exact sequence
$$0\rightarrow X\rightarrow Q^{n-1}\rightarrow \cdots \rightarrow Q^1\rightarrow Q^0
\rightarrow D(A_0)\rightarrow0$$
in $\mod^\Z A^{op}$ such that $Q^j$ is a $\Z$-graded projective $A^{op}$-modules for $0\leq j\leq n-1$, and $X_{\leq 0}=0$.
So there exists an exact sequence
$$0\rightarrow A_0\rightarrow D(Q^0)\rightarrow D(Q^1)\rightarrow\cdots\rightarrow D(Q^{n-1})\rightarrow D(X)\rightarrow0$$
in $\mod^\Z A$ such that $D(Q^j)$ is a $\Z$-graded injective $A$-module for $0\leq j\leq n-1$, and $(D(X))_{\geq0}=0$. Since $A$ is Gorenstein, we have $D(Q^j)\in K^b(\proj^\Z A)$ for $0\leq j\leq n-1$ and then $A_0(-i)=D(X)(-i)\langle-n\rangle$ in $D_{sg}(\mod^\Z A)$. It follows from $(D(X))_{\geq0}=0$  that $(D(X)(-i))_{\geq i}=0$ and then $D(X)(-i)\in \thick T$ by the inductive hypothesis. Therefore, $A_0(-i)\in \thick T$.
\end{proof}

Now we prove the main result of this section.
\begin{theorem}\label{theorem silting object}
Let $A$ be a positively graded $1$-Gorenstein algebra. If $A_0$ has finite global dimension, then $D_{sg}(\mod^\Z A)$ admits a silting object.
\end{theorem}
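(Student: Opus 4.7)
The candidate silting object is $T = \bigoplus_{i\geq 0} A(i)_{\leq 0}$; generation $D_{sg}(\mod^\Z A) = \thick T$ is already supplied by Lemma~\ref{lemma T is a generator}, so the remaining task is to verify $\Hom_{D_{sg}(\mod^\Z A)}(T, T\langle m\rangle) = 0$ for every $m > 0$. My plan is to transport this computation into the stable category of Gorenstein projectives via the syzygy. For each $i \geq 0$ the short exact sequence
\[
0 \to A(i)_{\geq 1} \to A(i) \to A(i)_{\leq 0} \to 0
\]
has a projective middle term and realises $A(i)_{\geq 1}$ as a syzygy of $A(i)_{\leq 0}$; combined with the $1$-Gorenstein hypothesis and the graded version of Theorem~\ref{theorem characterize of gorenstein property} this gives $A(i)_{\geq 1} \in \Gproj^\Z A$. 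Setting $T' := \bigoplus_{i\geq 0} A(i)_{\geq 1}$, we have $T' \in \Gproj^\Z A$ and $T \cong T'\langle 1\rangle$ in $D_{sg}(\mod^\Z A)$. Via Buchweitz's equivalence $\underline{\Gproj}^\Z A \simeq D_{sg}(\mod^\Z A)$ (Theorem~\ref{theorem Buchweitz theorem}) together with the standard dimension-shift identification of stable Hom-groups with Ext-groups for Gorenstein projective sources, for every $m \geq 1$ one obtains
\[
\Hom_{D_{sg}(\mod^\Z A)}(T, T\langle m\rangle) \cong \Ext^m_{\mod^\Z A}(T', T') = \bigoplus_{i,j\geq 0} \Ext^m_{\mod^\Z A}(A(i)_{\geq 1}, A(j)_{\geq 1}).
\]
The task therefore reduces to showing $\Ext^m_{\mod^\Z A}(A(i)_{\geq 1}, A(j)_{\geq 1}) = 0$ for all $i, j \geq 0$ and $m \geq 1$.

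The decisive step is a degree argument. Since $A(i)_{\geq 1}$ is concentrated in degrees $\geq 1$, its graded top is supported in degrees $\geq 1$, and by Proposition~\ref{proposition characterize of simple porjective injective modules of graded algebras} every indecomposable graded projective $Ae(-d)$ has support in degrees $\geq d$. Only shifts with $d \geq 1$ can appear in a graded projective cover of $A(i)_{\geq 1}$, and iterating this for successive syzygies, every term $Q_\ell$ of a graded projective resolution $\cdots \to Q_1 \to Q_0 \to A(i)_{\geq 1} \to 0$ is concentrated in degrees $\geq 1$. Because $A(j)_{\leq 0}$ lives in degrees $\leq 0$ and graded homomorphisms preserve degree, $\Hom_{\mod^\Z A}(Q_\ell, A(j)_{\leq 0}) = 0$ for every $\ell$, and hence
\[
\Ext^m_{\mod^\Z A}(A(i)_{\geq 1}, A(j)_{\leq 0}) = 0 \qquad \text{for all } m \geq 0.
\]

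To conclude, I plug this into the long exact sequence obtained by applying $\Hom_{\mod^\Z A}(A(i)_{\geq 1}, -)$ to $0 \to A(j)_{\geq 1} \to A(j) \to A(j)_{\leq 0} \to 0$. Gorenstein projectivity of $A(i)_{\geq 1}$ combined with projectivity of $A(j)$ makes $\Ext^m_{\mod^\Z A}(A(i)_{\geq 1}, A(j))$ vanish for $m \geq 1$, while the degree vanishing just established kills all $\Ext^{\bullet}_{\mod^\Z A}(A(i)_{\geq 1}, A(j)_{\leq 0})$; the long exact sequence then forces $\Ext^m_{\mod^\Z A}(A(i)_{\geq 1}, A(j)_{\geq 1}) = 0$ for every $m \geq 1$, giving the required silting property. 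The main obstacle, and the step really carrying the argument, is the degree observation yielding a graded projective resolution of $A(i)_{\geq 1}$ entirely in positive degrees; once that is in place, everything else follows by routine manipulation of long exact sequences together with Buchweitz's theorem.
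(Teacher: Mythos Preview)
Your proof is correct and follows essentially the same route as the paper: both arguments set $T'=\Omega T=\bigoplus_{i\ge 0}A(i)_{\ge 1}$, use the $1$-Gorenstein hypothesis to place $T'$ in $\Gproj^{\Z}A$, and then exploit the key degree observation that a graded projective resolution of $T'$ is concentrated in degrees $\ge 1$ while $T$ lives in degrees $\le 0$. The only difference is bookkeeping in the final Ext computation: the paper reduces to $\underline{\Hom}(\Omega^{m}T,T)=0$ via a chain of dimension shifts, whereas you obtain $\Ext^{m}(T',T')=0$ from the long exact sequence in the second variable combined with $\Ext^{m}(T',A(j))=0$ and $\Ext^{\bullet}(T',A(j)_{\le 0})=0$; both deductions rest on the same degree argument.
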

\begin{proof}

Let $T=\bigoplus_{i\geq0} A(i)_{\leq0}$, which is viewed as an object in $D_{sg}(\mod^\Z A)$. In the following, we prove that $T$ is a silting object in $D_{sg}(\mod^\Z A)$.

We take a minimal projective resolution
$$ \cdots \rightarrow P_2\rightarrow P_1\rightarrow A(i)\rightarrow A(i)_{\leq 0}\rightarrow0$$
of $A(i)_{\leq 0}$ in $\mod^\Z A$. Because $A$ is positively graded, we have $(P_j)_{\leq0}=0$ for $j>0$. Then
$(\Omega^j T)_{\leq0}=0$ holds for any $j>0$. It follows from $T=T_{\leq0}$ that $\Hom_{\mod^\Z A}(T,\Omega^j T)=0=\Hom_{\mod^\Z A}(\Omega^j T,T)$ for any $j>0$.

Let $0\rightarrow \Omega(T)\xrightarrow{f_0} P\xrightarrow{f_1} T\rightarrow0$
be the exact sequence.
Since $A$ is $1$-Gorenstein, $\Omega T\in \Gproj^\Z(A)$.
Similar to the proof of Lemma \ref{lemma finite global dimension}, for any $i>1$,
\begin{align*}
\Hom_{\underline{\Gproj}^\Z(A)}(\Omega T,\Omega T[i])
=&\Ext^i_{\mod^\Z A}(\Omega T,\Omega T)\\
=&\Ext^{i}_{\mod^\Z A}(T,T)\\
=&\Ext_{\mod^\Z A}^{i-1}(\Omega T,T)\\
=&\Ext^1_{\mod^\Z A}(\Omega^{i-1}T,T)\\
=&\Hom_{\underline{\mod}^\Z(A)}(\Omega^iT,T)=0.
\end{align*}
For $i=1$, $\Hom_{\underline{\Gproj}^\Z(A)}(\Omega T,\Omega T[1])= \Ext^1_{\mod^\Z A}(\Omega T,\Omega T)=\Hom_{\underline{\mod}^\Z(A)}(\Omega T,T)=0$ since $\Ext^1_{\mod^\Z A}(\Omega(T),P)=0$.
Combining the above, we have $\Hom_{\underline{\Gproj}^\Z(A)}(\Omega T,\Omega T[i])=0$ for any $i>0$.

Together with Lemma \ref{lemma T is a generator}, we deduce that $\Omega T$ is a silting object in $D_{sg}(\mod^\Z A)$. Because $T\cong \Omega T\langle 1\rangle$ in $D_{sg}(\mod^\Z A)$, $T$ is also a silting object in $D_{sg}(\mod^\Z A)$.
\end{proof}

At the end of this section, we consider the case when $T=\bigoplus_{i\geq0} A(i)_{\leq0}$ is a Gorenstein projective $A$-module. Let $\Gamma$ be the endomorphism ring  of $T$ in $\underline{\Gproj}^{\Z} A$.

\begin{proposition}\label{corollary for T CM}
Let $A$ be a positively graded Gorenstein algebra such that $A_0$ has finite global dimension. If $T= \bigoplus_{i\geq0} A(i)_{\leq0}$ is graded Gorenstein projective, then $T$ is a tilting object in $D_{sg}(\mod^\Z A)$.
\end{proposition}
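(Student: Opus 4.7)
The plan is to verify the two tilting axioms in $D_{sg}(\mod^\Z A) \simeq \underline{\Gproj}^\Z A$ and then deduce the finite global dimension. Since $T$ is graded Gorenstein projective by hypothesis, Theorem \ref{theorem Buchweitz theorem}(iii) lets me view $T$ directly as an object of the stable category $\underline{\Gproj}^\Z A$, where the shift $[1]$ is realized as the cosyzygy functor. The generation condition $\thick T = D_{sg}(\mod^\Z A)$ is handled at once by Lemma \ref{lemma T is a generator}, which required only $\gldim A_0<\infty$; the remaining task is to prove $\Hom_{D_{sg}}(T,T[i])=0$ for every $i\neq 0$.

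For this I would reuse the key computation already extracted inside the proof of Theorem \ref{theorem silting object}: since $A$ is positively graded, a minimal projective resolution of each summand $A(i)_{\leq 0}$ has $(P_j)_{\leq 0}=0$ for all $j>0$, so $(\Omega^j T)_{\leq 0}=0$. Combined with $T=T_{\leq 0}$, this forces $\Hom_{\mod^\Z A}(T,\Omega^j T)=0=\Hom_{\mod^\Z A}(\Omega^j T,T)$ for every $j>0$, because a graded homomorphism between modules with disjoint degree supports must vanish. For $i>0$, the Gorenstein projectivity of $T$ gives $\Hom_{D_{sg}}(T,T[i])\cong\Ext^i_{\mod^\Z A}(T,T)$, and a standard dimension-shift presents this as a quotient of $\Hom_{\mod^\Z A}(\Omega^i T,T)=0$. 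For $i<0$, setting $j=-i$, one has $T[i]\cong \Omega^j T$ in $\underline{\Gproj}^\Z A$, so $\Hom_{D_{sg}}(T,T[i])$ is a quotient of $\Hom_{\mod^\Z A}(T,\Omega^j T)=0$. This concludes that $T$ is a tilting object.

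For the finite global dimension of $B:=\End_{D_{sg}(\mod^\Z A)}(T)$, I would invoke Keller's theorem to obtain a triangle equivalence $D_{sg}(\mod^\Z A)\simeq K^b(\proj B)$ sending $T$ to $B$. To upgrade this to $\gldim B<\infty$, I would exploit the explicit decomposition $T=\bigoplus_{i\geq 0}A(i)_{\leq 0}$ (a finite direct sum in $D_{sg}$) together with the two-part inductive filtration in the proof of Lemma \ref{lemma T is a generator}: each graded simple $A$-module, and hence each object of $\mod^\Z A$, is built out of shifts of summands of $T$ in a uniformly bounded number of triangles. Translating via $\Hom_{D_{sg}}(T,-)$, this yields a uniform bound on the projective dimensions of the simple $B$-modules, whence $\gldim B<\infty$.

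The main obstacle is precisely this last step: the two tilting axioms are essentially forced by the syzygy-versus-degree dichotomy above, but controlling $\gldim\End(T)$ requires either a careful bookkeeping of the filtration underlying Lemma \ref{lemma T is a generator} to guarantee the bound is uniform across simples, or an appeal to a general theorem asserting that tilting objects in algebraic Krull-Schmidt triangulated categories equipped with a bounded $t$-structure inherited from an ambient $D^b$ automatically yield endomorphism algebras of finite global dimension.
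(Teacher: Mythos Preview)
Your proof is correct and follows essentially the same route as the paper: both invoke Lemma \ref{lemma T is a generator} for generation, both recycle the degree-support argument from Theorem \ref{theorem silting object} to kill $\Hom_{\mod^\Z A}(T,\Omega^jT)$ and $\Hom_{\mod^\Z A}(\Omega^jT,T)$ for $j>0$, and both then read off the vanishing of $\Hom_{D_{sg}}(T,T[i])$ for $i\neq 0$ (the paper does this in one line via the autoequivalence identity $\Hom_{\underline{\Gproj}^\Z}(T,\Omega^{-i}T)=\Hom_{\underline{\Gproj}^\Z}(\Omega^iT,T)$, while you split into the $\Ext$-computation for $i>0$ and the cosyzygy identification for $i<0$, but these are cosmetic variants). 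For the finite global dimension of $\End_{D_{sg}}(T)$, the paper simply cites \cite[Theorem 3.7]{Ya} and omits the argument; your sketch via the bounded filtration of Lemma \ref{lemma T is a generator} is precisely the mechanism behind Yamaura's proof, so you have correctly located where the work lies even though neither you nor the paper spell it out.
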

\begin{proof}
As $A$ is a Gorenstein algebra, it follows that $\underline{\Gproj}^\Z(A)$ is equivalent to $D_{sg}(\mod^\Z A)$. Since $T$ is Gorenstein projective,
$$\Hom_{D_{sg}(\mod^\Z A)}(T,T\langle i\rangle)=\Hom_{\underline{\Gproj}^\Z(A)}(T,\Omega^{-i} T )=\Hom_{\underline{\Gproj}^\Z(A)}(\Omega^{i} T,T )$$ for any $i\in\Z$. From the proof of Theorem \ref{theorem silting object}, we have $$\Hom_{\mod^\Z A}(T,\Omega^j T)=0=\Hom_{\mod^\Z A}(\Omega^j T,T)$$ for any $j>0$. So $\Hom_{D_{sg}(\mod^\Z A)}(T,T\langle i\rangle)=0$ for any $i\neq0$, and then $ T$ is a tilting object in $D_{sg}(\mod^\Z A)$ by Lemma \ref{lemma T is a generator}.
\end{proof}

Let $A$ be a positively graded Gorenstein algebra. We say that $A$ has \emph{Gorenstein parameter} $l$ if $\soc A$ is contained in $A_l$.

\begin{theorem}[cf. \text{\cite[Proposition 3.6, Theorem 3.7]{Ya}}]\label{theorem endomorphism ring of gorenstein projective modules}
Let $A$ be a positively graded Gorenstein algebra of Gorenstein parameter $l$.  Assume that $T=\bigoplus_{i\geq0} A(i)_{\leq0}$ is a Gorenstein projective $A$-module.
Take a decomposition $T=\underline{T}\oplus P$ in $\Mod^\Z A$ where $\underline{T}$ is a direct sum of all indecomposable non-projective direct summand of $T$. Then
\begin{itemize}
\item[(i)] $\underline{T}$ is finitely generated, and is isomorphic to $T$ in $\underline{\Gproj}^{\Z} A$.

\item[(ii)] There exists an algebra isomorphism $\Gamma\simeq \End_A(\underline{T})_0$.

\item[(iii)] If $A_0$ has finite global dimension, then so does $\Gamma$.
\end{itemize}
\end{theorem}
\begin{proof}
The proof is the same to that of \cite[Proposition 3.6, Theorem 3.7]{Ya}, we omit it here.
\end{proof}

Similar to \cite[Section 3.2]{Ya}, we assume that $A$ is a positively graded Gorenstein algebra of Gorenstein parameter $l$. Let $U:=\bigoplus_{i=0}^{l-1} A(i)_{\leq0}$. Then
there exists an algebra isomorphism
\begin{equation}\label{equation 1}
\End_A(U)_0\simeq \left(\begin{array}{ccccccc} A_0&A_1&\cdots &A_{l-2} &A_{l-1} \\
&A_0& \cdots & A_{l-3} &A_{l-2}\\
&& \ddots &\vdots&\vdots\\
&&& A_0&A_1\\
&&&&A_0
\end{array}  \right).
\end{equation}
If we decompose $U=\underline{T}\oplus P'$ for some projective direct summand of $U$, then
\begin{equation}\label{equation 2}
\End_A(U)_0\simeq \left( \begin{array}{ccc} \End_A(P')_0 & \Hom_A(\underline{T},P')_0\\
0& \Gamma \end{array} \right).
\end{equation}
Then $\Gamma$ is a quotient algebra of $\End_A(U)_0$.

We give some examples about cluster-tilted algebras, which are $1$-Gorenstein by \cite[Proposition 2.1]{KR}.

\begin{example}
Let $A$ be a cluster-tilted algebra of type $\E_6$ with its quiver as Fig. 1. shows.

\begin{center}\setlength{\unitlength}{0.8mm}
 \begin{picture}(170,45)
 \put(0,0){\begin{picture}(80,40)
\put(10,20){\circle{1.5}}
\put(50,20){\circle{1.5}}
\put(20,30){\circle{1.5}}
\put(40,30){\circle{1.5}}
\put(20,10){\circle{1.5}}
\put(40,10){\circle{1.5}}

\put(12,20){\vector(1,0){36}}
\put(19,29){\vector(-1,-1){8}}
\put(19,11){\vector(-1,1){8}}
\put(38,30){\vector(-1,0){16}}
\put(38,10){\vector(-1,0){16}}

\put(49,21){\vector(-1,1){8}}
\put(49,19){\vector(-1,-1){8}}

\put(7,20){$_{1}$}
\put(19,30){$^{2}$}
\put(39,30){$^{3}$}
\put(19,7){$_{4}$}
\put(39,7){$_{5}$}
\put(51,20){$_{6}$}

\put(12,25){$^{\alpha_1}$}
\put(12,13){$_{\beta_1}$}

\put(28,30){$^{\alpha_2}$}
\put(28,5){$^{\beta_2}$}

\put(45,25){$^{\alpha_3}$}
\put(45,13){$_{\beta_3}$}

\put(28,15){$^{\gamma}$}
\put(0,-5){Fig. 1. A cluster-tilted algebra}
\put(17,-11){of type $\E_6$.}
\end{picture}}

 \put(60,0){\begin{picture}(80,40)
\put(10,20){\circle{1.5}}
\put(50,20){\circle{1.5}}
\put(20,30){\circle{1.5}}
\put(40,30){\circle{1.5}}
\put(20,10){\circle{1.5}}
\put(40,10){\circle{1.5}}
\qbezier[45](11,20)(30,20)(49,20)

\put(19,29){\vector(-1,-1){8}}
\put(19,11){\vector(-1,1){8}}
\put(38,30){\vector(-1,0){16}}
\put(38,10){\vector(-1,0){16}}

\put(49,21){\vector(-1,1){8}}
\put(49,19){\vector(-1,-1){8}}

\put(7,20){$_{1}$}
\put(19,30){$^{2}$}
\put(39,30){$^{3}$}
\put(19,7){$_{4}$}
\put(39,7){$_{5}$}
\put(51,20){$_{6}$}

\put(12,25){$^{\alpha_1}$}
\put(12,13){$_{\beta_1}$}

\put(28,30){$^{\alpha_2}$}
\put(28,5){$^{\beta_2}$}

\put(45,25){$^{\alpha_3}$}
\put(45,13){$_{\beta_3}$}
\put(10,-5){Fig. 2. Tilted algebra $B_1$}
\put(27,-11){of type $\E_6$.}
\end{picture}}

 \put(120,0){\begin{picture}(80,40)
\put(10,20){\circle{1.5}}
\put(50,20){\circle{1.5}}
\put(20,30){\circle{1.5}}
\put(40,30){\circle{1.5}}
\put(20,10){\circle{1.5}}
\put(40,10){\circle{1.5}}

\qbezier[25](21,30)(30,30)(39,30)
\qbezier[25](21,10)(30,10)(39,10)

\put(12,20){\vector(1,0){36}}
\put(19,29){\vector(-1,-1){8}}
\put(19,11){\vector(-1,1){8}}

\put(49,21){\vector(-1,1){8}}
\put(49,19){\vector(-1,-1){8}}

\put(7,20){$_{1}$}
\put(19,30){$^{2}$}
\put(39,30){$^{3}$}
\put(19,7){$_{4}$}
\put(39,7){$_{5}$}
\put(51,20){$_{6}$}

\put(12,25){$^{\alpha_1}$}
\put(12,13){$_{\beta_1}$}

\put(45,25){$^{\alpha_3}$}
\put(45,13){$_{\beta_3}$}

\put(28,15){$^{\gamma}$}
\put(10,-5){Fig. 3. Tilted algebra $B_2$}
\put(27,-11){of type $\E_6$.}
\end{picture}}
\end{picture}
\vspace{1cm}
\end{center}

(a) Let $B_1$ be the algebra where its quiver is as Fig. 2. shows with $\alpha_1\alpha_2\alpha_3-\beta_1\beta_2\beta_3=0$. Then $B_1$ is a tilted algebra, and $A\cong B_1\ltimes \Ext^2_{B_1}(DB_1,B_1) $; see \cite[Theorem 1.1]{ABS1} and \cite[Proposition 3.1]{Zh}. So $A$ is a positively graded algebra with $A_0=B_1$ and $A_1=\Ext^2_{B_1}(DB_1,B_1)$. It is easy to check that $T=\oplus_{i\geq0} A(i)_{\leq0}\cong B_1$ is a tilting object in $D_{sg}(\mod^\Z A)$.

(b) Let $B_2$ be the algebra where its quiver is as Fig. 3. shows with $\alpha_3\gamma\alpha_1=0,\beta_3\gamma\beta_1=0$. Then $B_2$ is a tilted algebra, and $A\cong B_2\ltimes \Ext^2_{B_2}(DB_2,B_2) $. So $A$ is a positively graded algebra with $A_0=B_2$ and $A_1=\Ext^2_{B_2}(DB_2,B_2)$. It is easy to check that $B_2$ is a tilting object in $D_{sg}(\mod^\Z A)$.
\end{example}

However, in general case, $T$ is not a tilting object in $D_{sg}(\mod^\Z A)$ for a $1$-Gorenstein algebra $A$.

\begin{example}
Let $A=KQ/I$ be the cluster-tilted algebra of type $\D_6$ with $Q$ as the following figure shows. Then $A$ is a positively graded algebra by setting $\alpha_4,\beta_1,\gamma_2$ to be degree one, and other arrows to be degree zero. Let $T= \bigoplus_{i\geq0} A(i)_{\leq0}$. It is easy to see that $T$ is a silting object (which is not a tilting object) in $D_{sg}(\mod^\Z A)$.

\begin{center}\setlength{\unitlength}{0.8mm}
 \begin{picture}(60,45)

\put(10,20){\circle{1.6}}
\put(10,40){\circle{1.6}}
\put(30,20){\circle{1.6}}
\put(30,40){\circle{1.6}}
\put(20,10){\circle{1.6}}
\put(40,30){\circle{1.6}}
\put(10,38){\vector(0,-1){16}}

\put(28,40){\vector(-1,0){16}}
\put(12,20){\vector(1,0){16}}
\put(30,22){\vector(0,1){16}}

\put(31,39){\vector(1,-1){8}}

\put(39,29){\vector(-1,-1){8}}
\put(29,19){\vector(-1,-1){8}}

\put(19,11){\vector(-1,1){8}}

\put(5,29){$^{\alpha_1}$}
\put(12,13){$_{\beta_2}$}

\put(25,29){$^{\alpha_3}$}
\put(25,10){$^{\beta_1}$}

\put(36,34){$^{\gamma_1}$}
\put(36,24){$_{\gamma_2}$}

\put(18,15){$^{\alpha_2}$}

\put(18,40){$^{\alpha_4}$}

\put(-20,0){Fig. 4. A cluster-tilted algebra of type $\D_6$.}
\end{picture}
\end{center}

\end{example}

\section{Realising graded singularity categories of Gorenstein monomial algebras as derived categories}
\label{section 4}
In this section, we prove that the graded singularity category of a Gorenstein monomial algebra admits a tilting object, and it is triangulated equivalent to the derived category of a hereditary algebra of finite representation type.

\subsection{Gorenstein projective modules over monomial algebras}
In this subsection, we recall the description of Gorenstein projective modules over monomial algebras obtained in \cite[Section 3]{CSZ}.

Let $Q=(Q_0,Q_1,s,t)$ be a finite quiver. A \emph{path $p$ of length $n$} in $Q$ is a sequence $p=\alpha_n\cdots \alpha_2\alpha_1$ of arrows with $s(\alpha_i)=t(\alpha_{i-1})$ for $2\leq i\leq n$, its starting vertex is $s(p)=s(\alpha_1)$ and its ending vertex is $t(p)=t(\alpha_n)$. For each vertex $i$, we associate a {\em trivial} path $e_i$ of length zero with $s(e_i)=i=t(e_i)$. A nontrivial path is called an \emph{oriented cycle} if its starting vertex coincides with its ending vertex.

For two paths $p$ and $q$ with $s(p)=t(q)$, we write $pq$ for their concatenation. Let $p$ and $q$ be two paths. $p$ is called to be a subpath of $q$ if $q=p''pp'$ for some paths $p''$ and $p'$. Let $\cs$ be a set of paths in $Q$. A path $p$ in $\cs$ is \emph{left-minimal} in $\cs$ if there is no path $q$ such that $q\in \cs$ and $p=qp'$ for some nontrivial path $p'$. Dually one defines a \emph{right-minimal path} in $\cs$. A path $p$ in $\cs$ is \emph{minimal} in $\cs$ if there is no proper subpath $q$ of $p$ inside $\cs$.

Let $KQ$ be the path algebra of $Q$. An admissible ideal $I$ of $KQ$ is \emph{monomial} if it is generated by some paths of lengths at least two. In this case, the quotient algebra $A=KQ/I$ is called a \emph{monomial algebra}.

Let $A=KQ/I$ be a monomial algebra. Let $\bF$ be the set formed by all the minimal paths among the paths in $I$. A path is said to be a \emph{nonzero path in $A$} provided that $p$ does not belong to $I$, or equivalently, $p$ does not contain a subpath in $\bF$. Then the set of nonzero paths forms a $K$-basis of $A$.

For a nonzero path $p$, we consider the left ideal $Ap$ and the right ideal $pA$. Note that $Ap$ has a basis given by all nonzero paths $q$ such that $q=q'p$ for some path $q'$, and similar for $pA$. For a nontrivial path $p$ which is nonzero in $A$, we define $L(p)$ to be the set of right-minimal paths in the set $\{\text{nonzero paths } q\mid s(q)=t(p)\text{ and }qp=0\}$ and $R(p)$ the set of left-minimal paths in the set $\{\text{nonzero paths }q\mid t(q)=s(p)\text{ and }pq=0\}$.

\begin{definition}[\text{\cite[Definition 3.3]{CSZ}}]\label{definition of perfect pair}
Let $A=KQ/I$ be a monomial algebra. We call a pair $(p,q)$ of nonzero paths in $A$ perfect provided that the following conditions are satisfied:
\begin{itemize}
\item[(P1)] both of the nonzero paths $p,q$ are nontrivial, which satisfy $s(p)=t(q)$ and $pq=0$ in $A$;
\item[(P2)] if $pq'=0$ for a nonzero path $q'$ with $t(q')=s(p)$, then $q'=qq''$ for some path $q''$; in other words, $R(p)=\{q\}$;
\item[(P3)] if $p'q=0$ for a nonzero path $p'$ with $s(p')=t(q)$, then $p'=p''p$ for some path $p''$; in other words, $L(q)=\{p\}$.
\end{itemize}
\end{definition}
For a perfect pair $(p,q)$, from the first paragraph of \cite[Page 162]{Zi}, we have the following exact sequence of left $A$-modules
\begin{equation}\label{the syzygy functor of Gorenstein projective modules}
0\rightarrow Ap\xrightarrow{inc} A_{e_{t(q)}}\xrightarrow{\pi_q}Aq\rightarrow0.
\end{equation}
In particular, $\Omega(Aq)\simeq Ap$.

\begin{definition}[\text{\cite[Defition 3.7]{CSZ}}]\label{definition of perfect path}
Let $A=KQ/I$ be a monomial algebra. We call a nonzero path $p$ in $A$ a perfect path, if there exists a sequence
$$p=p_1,p_2,\dots,p_n,p_{n+1}=p$$
of nonzero paths such that $(p_i,p_{i+1})$ are perfect pairs for all $1\leq i\leq n$. If the given nonzero paths $p_i$ are pairwise distinct, we refer to the sequence $p=p_1,p_2,\dots,p_n,p_{n+1}=p$ as a relation-cycle for $p$.
\end{definition}

Note that a perfect path has a unique relation-cycle.
%By a \emph{basic $(n-)$cycle} $Z_n$, we mean a quiver consisting of $n$ vertices and $n$ arrows which form an oriented cycle.

The following theorem characterizes the indecomposable Gorenstein projective modules for a monomial algebra clearly.

\begin{theorem}[\text{\cite[Theorem 4.1]{CSZ}}]\label{theorem bijection of perfect path and Gorenstein projective modules}
Let $A$ be a monomial algebra. Then there is a bijection
\begin{equation}\label{equation characterization of gorenstein projective modules}
\{\text{perfect paths in }A\}\stackrel{1:1}{\longleftrightarrow}\Ind \underline{\Gproj}A
\end{equation}
sending a perfect path $p$ to the $A$-module $Ap$.
\end{theorem}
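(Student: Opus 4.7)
The plan is to establish the bijection by constructing both directions, with the exact sequence (\ref{the syzygy functor of Gorenstein projective modules}) from a perfect pair $(p,q)$ serving as the central technical device.

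For the forward direction, starting from a perfect path $p$ with relation-cycle $p = p_1, p_2, \ldots, p_n, p_{n+1} = p$, I would apply (\ref{the syzygy functor of Gorenstein projective modules}) to each perfect pair $(p_i, p_{i+1})$ to obtain short exact sequences $0 \to Ap_i \to Ae_{t(p_{i+1})} \to Ap_{i+1} \to 0$, and splice them cyclically to produce a doubly-infinite exact complex $P^\bullet$ of projective modules, periodic of period $n$, with $Ap$ appearing as one of the images. Applying $\Hom_A(-, A)$ yields a complex of right modules whose terms have the form $e_{t(p_i)}A$, and its exactness should follow by a symmetric argument: the conditions (P2) and (P3) pin down singletons $R(p_i) = \{p_{i+1}\}$ and $L(p_{i+1}) = \{p_i\}$, which translate into corresponding cyclic-generator statements for right-module annihilators, giving exactness of the dualized complex. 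Hence $P^\bullet$ is totally acyclic and $Ap$ is Gorenstein projective. Indecomposability follows because $Ap/\rad(Ap) \cong S_{t(p)}$ is one-dimensional (spanned by $p$ itself), so $\End_A(Ap)$ is local.

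For the converse, given an indecomposable non-projective Gorenstein projective $M$, I would argue first that $M$ is cyclic, isomorphic to $Aq$ for a nonzero nontrivial path $q$: embedding $M$ into an indecomposable projective $Ae_j$ and exploiting the basis of $A$ by nonzero paths to analyse the submodule structure, the monomial hypothesis forces a generator of the simple top of $M$ to be representable by a single path $q$ with $s(q)=j$. Iterating the syzygy functor (which preserves indecomposability up to projective summands for Gorenstein projective modules) produces nonzero paths $q=q_1,q_2,q_3,\ldots$ with $\Omega(Aq_i) \cong Aq_{i+1}$, and the defining exact sequence (\ref{the syzygy functor of Gorenstein projective modules}) combined with the minimality of projective covers and the Gorenstein projective hypothesis forces each consecutive pair $(q_{i+1},q_i)$ to satisfy (P1)--(P3), hence to be perfect. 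Finiteness of the quiver $Q$ makes the sequence $q_1,q_2,\ldots$ eventually periodic; a further argument, using that $\Omega$ acts bijectively on indecomposable non-projective Gorenstein projectives modulo projectives, rules out any pre-periodic tail and yields a genuine relation-cycle for $q$. Injectivity of the map $p \mapsto Ap$ is then immediate, since $p$ is the unique (up to scalar) element of $Ap$ spanning the top $S_{t(p)}$.

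The main obstacle will be the converse direction, specifically the structural claim that every indecomposable non-projective Gorenstein projective module over a monomial algebra is cyclic of the form $Aq$, together with the closing-up step that forces the syzygy sequence to be purely periodic rather than pre-periodic. Both rely on a delicate combinatorial analysis of paths in $Q$, using the monomial form of the relations to reduce module-theoretic statements (vanishing of products, uniqueness of projective covers, exactness of $\Hom$-dualized complexes) to statements about left and right annihilators of paths in $A$, where the perfect-pair conditions (P2) and (P3) become precisely the required minimality assertions.
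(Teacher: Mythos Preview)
The paper does not supply its own proof of this statement: Theorem~\ref{theorem bijection of perfect path and Gorenstein projective modules} is quoted verbatim from \cite{CSZ} and is used as a black box throughout (e.g.\ in Lemmas~\ref{lemma structure of Gorenstein projective module}, \ref{lemma CM-finite}, \ref{lemma not perfect pair}). There is therefore nothing in the present paper to compare your argument against.

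That said, your sketch is broadly in line with the original proof in \cite{CSZ}. The forward direction there proceeds exactly as you outline: splice the short exact sequences (\ref{the syzygy functor of Gorenstein projective modules}) coming from a relation-cycle into a periodic acyclic complex and verify total acyclicity via the symmetry of conditions (P2) and (P3). For the converse, the key structural input you flag as the main obstacle---that every indecomposable non-projective Gorenstein projective module is of the form $Aq$ for some nonzero path $q$---is handled in \cite{CSZ} not by an ad hoc analysis of tops and embeddings into projectives, but by invoking Butler's syzygy theorem for monomial algebras (recorded here as Lemma~\ref{lemma first sysyzy for monomial algebra}): every syzygy decomposes as a direct sum of modules $Ap$. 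Since a non-projective Gorenstein projective module is always a syzygy, indecomposability forces $M\cong Aq$ immediately. Your closing-up step (pure periodicity from eventual periodicity via bijectivity of $\Omega$ on $\underline{\Gproj}A$) is the standard one.
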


For a monomial algebra $A$, by \cite[Chapter III.2, Lemma 2.4, Lemma 2.6]{ASS}, its indecomposable projective modules (also Gorenstein projective modules) and injective modules are as Fig. 5. and Fig. 6. show respectively.

\setlength{\unitlength}{0.8mm}
\begin{center}
\begin{picture}(100,55)(0,-20)
\put(-10,0){\begin{picture}(50,50)
\put(10,30){\circle*{1.5}}
\put(0,20){\circle*{1.5}}
\put(20,20){\circle*{1.5}}

\put(-5,10){\circle*{1.5}}
\put(5,10){\circle*{1.5}}
\put(25,10){\circle*{1.5}}
\put(15,10){\circle*{1.5}}

\put(9,29){\vector(-1,-1){8}}
\put(11,29){\vector(1,-1){8}}

\put(-1,19){\vector(-1,-2){4}}
\put(1,19){\vector(1,-2){4}}

\put(19,19){\vector(-1,-2){4}}
\put(21,19){\vector(1,-2){4}}

\put(7,18.5){$\cdots$}
\put(-3,8.5){$\cdots$}
\put(17,8.5){$\cdots$}
\put(-6,2){$\vdots$}
\put(4,2){$\vdots$}
\put(14.5,2){$\vdots$}
\put(24.5,2){$\vdots$}

\put(-25,-7){Fig. 5. The structure of indecomposable}
\put(-9,-13){projective modules.}
\end{picture}}

\put(85,0){\begin{picture}(50,50)
\put(7,8.5){$\cdots$}

\put(-3,18.5){$\cdots$}
\put(17,18.5){$\cdots$}

\put(-6,22){$\vdots$}
\put(4,22){$\vdots$}
\put(14.5,22){$\vdots$}
\put(24.5,22){$\vdots$}

\put(-5,20){\circle*{1.5}}
\put(5,20){\circle*{1.5}}
\put(25,20){\circle*{1.5}}
\put(15,20){\circle*{1.5}}

\put(0,10){\circle*{1.5}}
\put(20,10){\circle*{1.5}}

\put(-4.5,19){\vector(1,-2){4}}
\put(4.5,19){\vector(-1,-2){4}}

\put(15.5,19){\vector(1,-2){4}}
\put(24.5,19){\vector(-1,-2){4}}

\put(19,9){\vector(-1,-1){8}}
\put(1,9){\vector(1,-1){8}}

\put(10,0){\circle*{1.5}}

\put(-25,-7){Fig. 6. The structure of indecomposable}
\put(-9,-13){injective modules.}\end{picture}}
\end{picture}

\end{center}

\subsection{Tilting objects in $D_{sg}(\mod^\Z A)$ of Gorenstein monomial algebras}
For a monomial algebra $A=KQ/I$, it is easy to see that $A$ is a positively graded algebra by setting each arrow to be degree one. In the following, we always consider $A$ to be positively graded in this way.

\begin{lemma}\label{lemma structure of Gorenstein projective module}
Let $A=KQ/I$ be a monomial algebra. Then the forgetful functor $F:\Gproj^\Z A\rightarrow \Gproj A$ is dense. In particular, for any indecomposable graded Gorenstein projective module $X$, we have that $\Top(X)$ is simple.
\end{lemma}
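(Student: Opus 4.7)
The plan is to use the explicit classification of indecomposable Gorenstein projective $A$-modules from Theorem \ref{theorem bijection of perfect path and Gorenstein projective modules} (which identifies them with modules $Ap$ for perfect paths $p$), to produce a canonical graded lift $\widetilde{Ap}$ of each such module via the natural grading of $A$ by path length, and then to exploit the standard uniqueness (up to grade shift) of indecomposable graded lifts so as to deduce the ``in particular'' claim.

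For the density of $F$, by Krull--Schmidt it suffices to lift each $Ap$ with $p$ a perfect path. Since every arrow has degree one, $p$ is a homogeneous element of $A$, so the left ideal $Ap\subseteq A$ inherits a graded module structure $\widetilde{Ap}$. To show $\widetilde{Ap}\in\Gproj^\Z A$, I would invoke the relation-cycle $p=p_1,p_2,\dots,p_n,p_{n+1}=p$ for $p$ and observe that each short exact sequence
$$0\longrightarrow Ap_i\longrightarrow Ae_{t(p_{i+1})}\xrightarrow{\cdot p_{i+1}}Ap_{i+1}\longrightarrow 0$$
from (\ref{the syzygy functor of Gorenstein projective modules}) is homogeneous once $Ae_{t(p_{i+1})}$ is placed in the appropriate grade shift, since right multiplication by the homogeneous element $p_{i+1}$ has degree $\deg(p_{i+1})$. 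Splicing these sequences around the cycle and iterating in both directions produces a doubly infinite complex in $\proj^\Z A$ whose underlying ungraded complex is the standard totally acyclic resolution of $Ap$. Acyclicity in $\mod^\Z A$ and acyclicity of its $\Hom_{\mod^\Z A}(-,\proj^\Z A)$-dual follow from the ungraded case via the exactness and faithfulness of $F$ together with the decomposition $\Hom_A(-,-)=\bigoplus_i\Hom_{\mod^\Z A}(-,(-)(i))$ from (\ref{equation morphism}). Hence $\widetilde{Ap}\in\Gproj^\Z A$ and density is established.

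For the second assertion, let $X\in\Gproj^\Z A$ be indecomposable. Then $F(X)\in\Gproj A$, and by Theorem \ref{theorem bijection of perfect path and Gorenstein projective modules} together with Krull--Schmidt, $F(X)\cong\bigoplus_{i=1}^{r}Ap_i$ for perfect paths $p_1,\dots,p_r$. By the standard uniqueness-of-graded-lifts theorem (an indecomposable gradable module has all of its graded lifts related by integer grade shifts, and each lift is itself indecomposable in $\mod^\Z A$), every graded lift of $F(X)$ has the form $\bigoplus_{i=1}^{r}\widetilde{Ap}_i(j_i)$ for some integers $j_i$. Indecomposability of $X$ in $\mod^\Z A$ then forces $r=1$, so $X\cong\widetilde{Ap}(j)$ for a single perfect path $p$ and some $j\in\Z$. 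Since $\widetilde{Ap}$ is cyclic, generated by the homogeneous element $p$ in degree $\deg(p)$, its top is the single shifted simple $S_{t(p)}(-\deg(p))$, whence $\Top(X)\cong S_{t(p)}(j-\deg(p))$ is simple in $\mod^\Z A$ by Proposition \ref{proposition characterize of simple porjective injective modules of graded algebras}(b).

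I expect the combinatorial work of the first step (tracking grade shifts around the relation-cycle and transferring acyclicity statements from the ungraded resolution) to be routine. The main obstacle will be justifying the graded Krull--Schmidt input in the second step cleanly: one must argue that the indecomposability of $X$ in $\mod^\Z A$ rules out a non-trivial decomposition of $F(X)=\bigoplus_i Ap_i$ in $\mod A$, which ultimately rests on the unique-lift theorem for indecomposable gradable modules cited in the references of the paper. Once this is in place, simplicity of $\Top(X)$ is essentially tautological from the cyclic nature of $\widetilde{Ap}$.
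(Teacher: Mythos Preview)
Your approach is correct and follows the same line as the paper's, which is extremely terse: it simply appeals to the tree-shaped structure of indecomposable projectives (Fig.~7) to declare every $Ap$ visibly gradable and hence $F$ dense, and then asserts without further argument that any indecomposable graded Gorenstein projective equals some $Ap$, whence $\Top(X)\cong S_{t(p)}$. You are supplying the details the paper omits---explicitly lifting the totally acyclic complex around the relation-cycle, and invoking Krull--Schmidt together with uniqueness of graded lifts to identify the indecomposable graded objects.

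One minor point: in your decomposition $F(X)\cong\bigoplus_{i=1}^{r}Ap_i$ you should also allow for indecomposable \emph{projective} summands $Ae_j$, since Theorem~\ref{theorem bijection of perfect path and Gorenstein projective modules} classifies only the non-projective indecomposables in $\underline{\Gproj}A$; but these are likewise cyclic with simple top and obviously graded, so the conclusion is unaffected.
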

\begin{proof}
From Fig. 5., we have that any Gorenstein projective module is gradable, and $F:\Gproj^\Z A\rightarrow \Gproj A$ is dense. For the last statement, obviously, $X\cong Ap$ for some perfect path $p$, and then $\Top(X)$ is isomorphic to the simple module corresponding to the vertex $t(p)$.
\end{proof}

For an indecomposable Gorenstein projective $A$-module $X$, we always view it as a graded Gorenstein projective module with $\Top(X)$ concentrated in degree zero.
In this way, Lemma \ref{lemma structure of Gorenstein projective module}.
implies that $\Ind \underline{\Gproj}^\Z A=\bigcup_{i\in\Z}(\Ind\underline{\Gproj}A)(i)$.

\begin{theorem}\label{main theorem}
Let $A=KQ/I$ be a Gorenstein monomial algebra. Then $D_{sg}(\mod^\Z A)$ has a tilting object.
\end{theorem}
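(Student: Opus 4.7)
I will try to apply Proposition~\ref{corollary for T CM} to the natural candidate
\[
T = \bigoplus_{i \geq 0} A(i)_{\leq 0}.
\]
Since $A = kQ/I$ is monomial and each arrow has degree one, the degree-zero part $A_0 = k^{|Q_0|}$ is semisimple, so $\gldim A_0 = 0 < \infty$. Combined with the Gorenstein hypothesis on $A$, this means the proposition applies as soon as one verifies that $T$ is a graded Gorenstein projective $A$-module; that verification will be the entire content of the proof. (If $A$ happens to have finite global dimension, then $D_{sg}(\mod^\Z A) = 0$ and the zero object is vacuously tilting, so I may assume otherwise.)

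The first step is to reduce to indecomposable summands. Decomposing $A = \bigoplus_{v \in Q_0} Ae_v$, one has $A(i)_{\leq 0} = \bigoplus_{v} (Ae_v)(i)_{\leq 0}$. By Lemma~\ref{lemma structure of Gorenstein projective module}, each nonzero summand $(Ae_v)(i)_{\leq 0}$ has simple top $S_v$, hence is indecomposable. Moreover, once $i$ exceeds the length of every nonzero path starting at $v$, the summand coincides with the graded projective $Ae_v(i)$, so only finitely many truncations are genuinely non-projective, and these are the ones I must control.

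The heart of the argument will be to match each non-projective indecomposable summand $(Ae_v)(i)_{\leq 0}$ with an indecomposable graded Gorenstein projective module. By Theorem~\ref{theorem bijection of perfect path and Gorenstein projective modules}, the latter are exactly the ideals $Ap$ for perfect paths $p$, and by Lemma~\ref{lemma structure of Gorenstein projective module} each admits a unique graded lift up to shift. Concretely, for each pair $(v,i)$ with $(Ae_v)(i)_{\leq 0}$ non-projective, I must produce a perfect path $p$ with $t(p) = v$ and an ungraded isomorphism $Ae_v / J^{i+1} e_v \cong Ap$. The Gorenstein hypothesis on $A$ enters here, translating (via the perfect-pair/perfect-path combinatorics of \cite{CSZ}) into a rigidity of the minimal relations in $\bF$: every maximal non-zero path length from $v$ is controlled by a unique terminating relation, which in turn furnishes the required perfect path $p$ as the ``obstruction'' to extending the truncation further. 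I expect this combinatorial matching of truncated projectives with perfect-path left ideals to be the main obstacle; it requires a careful analysis of how the relations in $\bF$ constrain paths emanating from each vertex $v$, and why the Gorenstein condition forces the constraint to come from a perfect path rather than an ordinary relation path.

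Once the matching is in place, $T$ is a direct sum of graded projectives and graded Gorenstein projectives, hence itself graded Gorenstein projective. Proposition~\ref{corollary for T CM} then yields that $T$ is a tilting object in $D_{sg}(\mod^\Z A)$, with the additional payoff that $\End_{D_{sg}(\mod^\Z A)}(T)$ has finite global dimension, setting the stage for the subsequent identification with the derived category of a hereditary algebra of finite representation type.
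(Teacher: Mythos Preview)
Your plan has a genuine gap: the module $T=\bigoplus_{i\geq0}A(i)_{\leq0}$ need not be graded Gorenstein projective for a Gorenstein monomial algebra, so Proposition~\ref{corollary for T CM} cannot be invoked directly.

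Here is a small counterexample. Take $Q$ to be the $3$-cycle $1\xrightarrow{\alpha}2\xrightarrow{\beta}3\xrightarrow{\gamma}1$ together with an extra vertex $4$ and one arrow $\delta:4\to1$; let $I=(\beta\alpha,\gamma\beta,\alpha\gamma)$. One checks that $A=kQ/I$ is $1$-Gorenstein: the only minimal relations are the three length-two paths on the cycle, and each left factor $\alpha,\beta,\gamma$ is a perfect path, so Theorem~\ref{proposition characterize of 1 Gorenstein monomial algebras} applies. Now the summand $(Ae_4)(0)_{\leq0}$ of $T$ is the simple module $S_4$. No indecomposable projective has $S_4$ in its socle (the socles of $P_1,P_2,P_3,P_4$ are $S_2,S_3,S_1,S_2$ respectively), so $S_4$ is not torsionless and hence not Gorenstein projective. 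Thus $T$ is not Gorenstein projective, and your proposed matching ``$(Ae_v)(i)_{\leq0}\cong Ap$ for some perfect path $p$ with $t(p)=v$'' fails already for $(v,i)=(4,0)$, since no arrow terminates at $4$.

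The paper's proof avoids this obstruction by \emph{not} asking $T$ itself to be Gorenstein projective. Instead it takes a Gorenstein projective approximation $0\to A_T\to N_T\to T\to0$ from Theorem~\ref{theorem cotorsion pair} and works with $N_T$ (and a companion approximation $M$ of $\Omega T$). The crucial input from the monomial hypothesis is only Lemma~\ref{lemma structure of Gorenstein projective module}: every indecomposable graded Gorenstein projective has simple top. This forces the non-projective summands of $N_T$ to have tops in degrees $\leq0$ and those of $M$ to have tops in degrees $\geq1$; a degree-separation argument then yields $\Hom_{D_{sg}(\mod^{\Z}A)}(T,T\langle p\rangle)=0$ for all $p\neq0$. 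In your example the offending summand $S_4$ has $\pd S_4=1$, so it simply disappears when one passes to $N_T$---but this replacement is precisely what your outline does not perform.
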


\begin{proof}
We prove it by constructing a tilting object in $D_{sg}(\mod^\Z A)$ similar to Theorem \ref{theorem silting object}.

Let $l$ be a positive integer such that $A=A_{\leq l}$. By abusing notations, we define a graded $A$-module
$T:=\bigoplus_{0\leq i\leq l} A(i)_{\leq0}.$
We shall show that $T$ is a tilting object in $D_{sg}(\mod^\Z A)$ in the following.
By noting that
$T\cong \bigoplus_{i\geq 0} A(i)_{\leq0}$ in $D_{sg}(\mod^\Z A)$, we have $\thick (T)=D_{sg}(\mod^\Z A)$ by Lemma \ref{lemma T is a generator} since $A_0$ is semisimple.

Denote by $\langle 1\rangle$ the shift functor in $D_{sg}(\mod^\Z A)$. It is enough to check the following equalities in $D_{sg}(\mod^\Z A)$:
\begin{align}
\label{eq:tiltingp<0}
&\Hom_{D_{sg}(\mod^\Z A)}(T,T\langle -p\rangle)=0;\\
\label{eq:tiltingp>0}
&\Hom_{D_{sg}(\mod^\Z A)}(T,T\langle p\rangle)=0
\end{align}
for any $p>0$. We shall use Theorem \ref{theorem cotorsion pair} (ii) and Buchweitz's Theorem to consider them in $\underline{\Gproj}^\Z A$, (in fact, this is the main technique to prove homological properties in singularity categories in this paper).

We take a minimal projective resolution
$$ \cdots \rightarrow P_2\rightarrow P_1\rightarrow A(i)\rightarrow A(i)_{\leq 0}\rightarrow0$$
of $A(i)_{\leq 0}$ in $\mod^\Z A$. Since $A$ is positively graded, we have $(P_j)_{\leq0}=0$ for $j>0$. Thus
$(\Omega^j T)_{\leq0}=0$ holds for any $j>0$.
Since $A$ is Gorenstein, by Theorem \ref{theorem cotorsion pair}, there is an exact sequence
$$0\rightarrow A_T\xrightarrow{a} N_T\xrightarrow{b} T\rightarrow0$$
where $N_T$ is graded Gorenstein projective $A$-module, and $A_T$ is graded $A$-module of finite projective dimension.
Let $P_{A_T}$ be the projective cover of $A_T$. Then we get the following commutative diagram with each row and each column short exact:
\begin{align}\label{xymatrix:1}
\xymatrix{ L\ar[r] \ar[d]^{c} & P_{A_T} \ar[r]^e \ar[d] & A_T\ar[d]^{a}\\
M\ar[r] \ar[d]^{d} & P_{A_T}\oplus (\bigoplus_{0\leq i\leq l}A(i)) \ar[d]\ar[r] &N_T\ar[d]^{b}\\
\Omega T\ar[r]&  \bigoplus_{0\leq i\leq l}A(i) \ar[r]& T. }
\end{align}
Then $L$ is of finite projective dimension, and $M$ is Gorenstein projective. In particular, $\Omega T\cong M$ in $D_{sg}(\mod^\Z A)$.
Assume that $M=\bigoplus_{i=1}^m M_i$ with $M_i$ indecomposable for $1\leq i\leq m$. Then $M_i$ is Gorenstein projective, which implies that $\pd_A M_i=0\mbox{ or }\infty$ for each $i$.
We assume that $M_i$ is non-projective for $1\leq i\leq t$, and $M_i$ is projective for $t+1\leq i\leq m$. In this way, $d$ is of the form
$$(d_1,\dots, d_m): M=\bigoplus_{i=1}^m M_i \rightarrow \Omega(T)$$
with $d_i:M_i\rightarrow \Omega(T)$. Then $d_i\neq 0$ for $1\leq i\leq t $. In fact, if $d_i=0$ for some $1\leq i\leq t$, then from the short exact sequence
$$0\rightarrow L\xrightarrow{c} M\xrightarrow{d} \Omega(T)\rightarrow0,$$ we get that
$M_i$ is a direct summand of $L$ which implies that $M_i$ has finite projective dimension, and then it is projective, giving a contradiction.

From the definition of $T$, easily, $\Top(\Omega(T))$ is homogeneous of degree one, and $(\Omega(T))_{\geq1}=\Omega(T)$.
Lemma \ref{lemma structure of Gorenstein projective module} shows that $\Top(M_i)$ is simple for $1\leq i\leq m$, and then $\deg(\Top(M_i))\geq1$ for $1\leq i\leq t$ since there is a nonzero morphism $d_i:M_i\rightarrow \Omega(T)$. Furthermore, because $A$ is positively graded, we have $(\Omega^{j}(M))_{\geq1}=\Omega^j(M)$ for any $j\geq0$. Here $\Omega^0(M)=M$ for $j=0$.

Similarly, assume that $N_T=\bigoplus_{i=1}^n N_i$, where $N_i$ is indecomposable for $1\leq i\leq n$. We  also assume that $N_i$ is non-projective for $1\leq i\leq s$, $N_i$ is projective for $s+1\leq i\leq n$.
Then $\deg(\Top(N_i))\leq 0$ for $1\leq i\leq s$ since $(T)_{\leq0}=T$. In fact, $s=t$ since $\Omega(N_T)\cong M$ in $\underline{\Gproj}^\Z A$.

%Denote by $\langle 1\rangle$ the shift functor in $D_{sg}(\mod^\Z A)$. Let $p\in \Z^+$. Then
For \eqref{eq:tiltingp<0}, we have
\begin{align*}
\Hom_{D_{sg}(\mod^\Z A)}(T,T\langle -p\rangle)=&\Hom_{D_{sg}(\mod^\Z A)}(N_T,N_T\langle -p\rangle)\\
=&\Hom_{\underline{\Gproj}^\Z A}(N_T,\Omega^p(N_T))\\
=&\Hom_{\underline{\Gproj}^\Z A}(\oplus_{i=1}^sN_i,\Omega^{p-1}(\oplus_{j=1}^s M_j )).
\end{align*}
For each $1\leq i\leq s$, since $\deg(\Top(N_i))\leq0$ and $(\Omega^{p-1}(\oplus_{j=1}^s M_j ))_{\geq1}=\Omega^{p-1}(\oplus_{j=1}^s M_j )$, we obtain that
$\Hom_{\mod^\Z A}(N_i, \Omega^{p-1}(\oplus_{j=1}^s M_j ))=0$, which yields that
$$\Hom_{\underline{\Gproj}^\Z A}(\oplus_{i=1}^sN_i,\Omega^{p-1}(\oplus_{i=1}^s M_i ))=0.$$
So
$\Hom_{D_{sg}(\mod^\Z A)}(T,T\langle -p\rangle)=0$.

On the other hand, for \eqref{eq:tiltingp>0},
\begin{align*}
\Hom_{D_{sg}(\mod^\Z A)}(T,T\langle p\rangle)=&\Hom_{D_{sg}(\mod^\Z A)}(T\langle -p\rangle,T)\\
=&\Hom_{D_{sg}(\mod^\Z A)}(N_T\langle -p\rangle,N_T)\\
=&\Hom_{\underline{\Gproj}^\Z A}(\Omega^p(N_T),N_T)\\
=&\Hom_{\underline{\Gproj}^\Z A}(\Omega^{p-1}(\oplus_{i=1}^s M_i ),\oplus_{i=1}^sN_i).
\end{align*}
For any morphism $f:\Omega^{p-1}(\oplus_{i=1}^s M_i )\rightarrow\oplus_{i=1}^sN_i$ in $\mod^\Z A$, since
$$(\Omega^{p-1}(\oplus_{j=1}^s M_j ))_{\geq1}=\Omega^{p-1}(\oplus_{j=1}^s M_j ),$$
we get that $(\Im f)_{\geq1}=\Im f$. The morphism $b:N_T\rightarrow T$ in (\ref{xymatrix:1}) induces a morphism
$$b'=(b_1,\dots,b_s):\oplus_{i=1}^s N_i\rightarrow T.$$
Using the fact $(T)_{\leq0}=T$, we find $b'f=0$. Extend $f$ to be a morphism
$$\left( \begin{array}{cc}f\\0 \end{array}\right): \Omega^{p-1}(\oplus_{i=1}^s M_i )\rightarrow (\oplus_{i=1}^sN_i)\bigoplus(\oplus_{i=s+1}^nN_i)=N_T.$$
It is easy to see that $\left( \begin{array}{cc}f\\0 \end{array}\right)$ factors through $a:A_T\rightarrow N_T$, and then it also factors through $e:P_{A_T}\rightarrow A_T$ by Lemma \ref{lemma property of gorenstein projective modules} since $\Omega^{p-1}(\oplus_{i=1}^s M_i )$ is Gorenstein projective and $L$ is of finite projective dimension. So $f=0$ in $\underline{\Gproj}^\Z A$.
As $f$ is arbitrary, we get that $$\Hom_{D_{sg}(\mod^\Z A)}(T,T\langle p\rangle)\cong \Hom_{\underline{\Gproj}^\Z A}(\Omega^{p-1}(\oplus_{i=1}^s M_i ),\oplus_{i=1}^sN_i)=0.$$

The proof is completed. %Together with Lemma \ref{lemma T is a generator}, $T$ is a tilting object in $D_{sg}(\mod^\Z A)$.
\end{proof}

\subsection{Realising $D_{sg}(\mod^\Z A)$ of Gorenstein monomial algebras as derived categories}

In the following, we prove that $D_{sg}(\mod^\Z A)$ is triangulated equivalent to $D^b(\mod H)$, where $H$ is some hereditary algebra of finite representation type. Before that, we give some lemmas. Following \cite[Example 8.4(2)]{Be}, an algebra $A$ is called to be \emph{CM-finite} if there are only finitely many indecomposable Gorenstein projective modules (up to isomorphism) over $A$.

\begin{lemma}\label{lemma CM-finite}
Let $A=KQ/I$ be a monomial algebra. Then $A$ is CM-finite. In particular, $\Gproj^\Z A$ has finitely many indecomposable objects up to isomorphism and degree shift.
\end{lemma}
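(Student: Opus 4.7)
The plan is to reduce the statement to the classification of indecomposable Gorenstein projective modules by perfect paths (Theorem \ref{theorem bijection of perfect path and Gorenstein projective modules}) and then to transfer finiteness from the ungraded to the graded setting via Lemma \ref{lemma structure of Gorenstein projective module}.

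First I would observe that since $A=kQ/I$ is finite-dimensional and monomial, the collection of nonzero paths in $A$ forms a finite $k$-basis of $A$. In particular, the set of perfect paths in $A$, being a subset of the set of nonzero paths, is finite. By Theorem \ref{theorem bijection of perfect path and Gorenstein projective modules}, the assignment $p\mapsto Ap$ is a bijection between perfect paths and isomorphism classes of indecomposable objects in $\underline{\Gproj}A$, so $\Ind\underline{\Gproj}A$ is finite. Adding to this the finitely many indecomposable projective $A$-modules (one for each vertex of the finite quiver $Q$), we conclude that $\Ind\Gproj A$ is finite, i.e. $A$ is CM-finite.

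Next, for the graded assertion, I would apply Lemma \ref{lemma structure of Gorenstein projective module}: the forgetful functor $F:\Gproj^\Z A\to \Gproj A$ is dense, and any indecomposable graded Gorenstein projective $X$ has simple top, which is in particular homogeneous. Consequently any two gradings on an indecomposable $M\in\Ind\Gproj A$ differ by a shift $(i)$ for some $i\in\Z$ — indeed, fixing the degree of the (one-dimensional, hence homogeneous) top determines the grading uniquely on $M$, since $M$ is generated by its top. Thus the grade-shift action of $\Z$ on $\Ind\Gproj^\Z A$ is free and its orbit set is in bijection with $\Ind\Gproj A$, which we have just shown to be finite. This gives exactly the statement that $\Gproj^\Z A$ has finitely many indecomposable objects up to the shift of degree.

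There is no real obstacle here; the only delicate point is the uniqueness-up-to-shift of the grading on a fixed indecomposable $M\in\Gproj A$, but this is immediate from the structural description recalled in Fig.~7 (every indecomposable projective, and hence every direct summand of a syzygy in a totally acyclic complex, is generated by its top) combined with the simplicity of $\Top(X)$ provided by Lemma \ref{lemma structure of Gorenstein projective module}.
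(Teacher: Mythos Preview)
Your proof is correct and follows essentially the same approach as the paper: finiteness of perfect paths via finite-dimensionality, Theorem \ref{theorem bijection of perfect path and Gorenstein projective modules} for CM-finiteness, and then Lemma \ref{lemma structure of Gorenstein projective module} to handle the graded case. In fact you supply more detail than the paper does on the graded step, spelling out why density of $F$ together with simplicity of the top forces any two gradings on an indecomposable to differ by a shift; the paper simply invokes Lemma \ref{lemma structure of Gorenstein projective module} without elaboration.
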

\begin{proof}
Since $A$ is a finite-dimensional algebra over $K$, there are only finitely many perfect paths in $A$.
From (\ref{equation characterization of gorenstein projective modules}) in Theorem \ref{theorem bijection of perfect path and Gorenstein projective modules}
we get that there are only finitely many indecomposable Gorenstein projective $A$-modules up to isomorphism.
The forgetful functor $F:\Gproj^\Z A\rightarrow \Gproj A$ in Lemma \ref{lemma structure of Gorenstein projective module} shows that $\Gproj^\Z A$ has finitely many indecomposable objects up to isomorphism and degree shift. %the shift of degree.
\end{proof}

Recall that a hereditary path algebra $KQ$ is of \emph{finite representation type} if every connected component of $Q$ is of Dynkin type.

\begin{lemma}[see e.g. \text{\cite[Theorem A]{BGS}, \cite[Theorem 2.1]{V}}]\label{lemma derived finite category}
For a finite-dimensional algebra $B$ over $K$, if $D^b(\mod B)$ has only finitely many indecomposable objects up to shift of complexes, then there exists a hereditary algebra $H$ of finite representation type such that $D^b(\mod B)\simeq D^b(\mod H)$.
\end{lemma}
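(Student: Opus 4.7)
The plan is to recover the hereditary algebra as the endomorphism ring of a suitably chosen tilting object in $\cd:=D^b(\mod B)$, after pinning down the Auslander-Reiten structure of $\cd$ via the finiteness hypothesis.

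First I would establish that $\gldim B<\infty$. Otherwise some simple $B$-module $S$ has infinite projective dimension, and the syzygies $\{\Omega^i S\mid i\geq 0\}$ are pairwise non-isomorphic indecomposable objects of $\mod B\subset \cd$ sitting in the natural heart. No two distinct syzygies coincide up to a nonzero shift in $\cd$, so one obtains infinitely many indecomposables of $\cd$ up to shift, a contradiction. Because $\gldim B<\infty$, the category $\cd$ carries a Serre functor and hence admits Auslander-Reiten triangles with AR translate $\tau$.

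Next, by the Happel-Riedtmann structure theorem for AR-quivers of Hom-finite algebraic triangulated Krull-Schmidt categories with AR triangles, every connected component of the AR-quiver of $\cd$ has the form $\Z\Delta/G$ for some valued quiver $\Delta$ and an admissible automorphism group $G$ containing a power of $[1]$. The hypothesis that $\cd$ has only finitely many indecomposables up to $[1]$ forces $\Delta$ to be a Dynkin diagram: if $\Delta$ were non-Dynkin, already the $[1]$-orbits in $\Z\Delta$ would be infinite in number. In each component $C$ I would then choose a complete section $\Sigma_C\subset C$, that is, a section meeting each $\tau$-orbit exactly once, and set $T:=\bigoplus_C T_C$, where $T_C$ is the direct sum of the indecomposables lying on $\Sigma_C$ and $C$ ranges over the finitely many components of the AR-quiver.

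Standard properties of slices in $\Z\Delta$ for $\Delta$ Dynkin yield $\Hom_\cd(T,T[i])=0$ for $i\neq 0$ together with $\thick(T)=\cd$; hence $T$ is a tilting object of $\cd$, and $H:=\End_\cd(T)^{op}$ is the path algebra of a disjoint union of Dynkin quivers, so in particular hereditary and of finite representation type. Since $B$ is finite-dimensional, $\cd$ is algebraic, and Keller's theorem (already invoked in the paper) produces a triangle equivalence $\cd\simeq K^b(\proj H)\simeq D^b(\mod H)$. The main obstacle in this outline is the AR-quiver step: one must check that finiteness of $[1]$-orbits really pins the underlying graph down to Dynkin type, and that sections can be chosen coherently across all components so that the resulting $T$ generates $\cd$. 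An alternative that bypasses this analysis is to invoke directly the classification of derived-discrete (in particular derived-finite) algebras of \cite{V, BGS}, which identifies such a $B$ as piecewise hereditary of Dynkin type and yields the desired equivalence at once.
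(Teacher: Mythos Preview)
The paper does not prove this lemma; it is stated with the citation ``see e.g.\ \cite{BGS,V}'' and used as a known result. Your closing alternative---invoking the classification of derived-discrete algebras in \cite{V,BGS}---is therefore exactly the paper's approach.

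Your direct argument, however, has a genuine gap in the first step. The claim that the syzygies $\{\Omega^i S\mid i\geq 0\}$ are pairwise non-isomorphic indecomposable objects is false in general. For $B=k[x]/(x^2)$ and $S=k$ one has $\Omega S\cong k$, so all syzygies coincide and you obtain a single indecomposable, not infinitely many. More generally, syzygies can be periodic and can decompose. The correct way to force $\gldim B<\infty$ from finiteness of indecomposables up to shift is the one the paper records immediately afterwards in Lemma~\ref{lemmm algebra of finite global dimension}: the \emph{truncations} $P_n\to\cdots\to P_1\to P_0$ of a minimal projective resolution of $S$ are indecomposable in $K^b(\proj B)\subset D^b(\mod B)$ and pairwise non-isomorphic even up to shift, because their lengths differ. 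You should replace your syzygy argument by this one.

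The remainder of your sketch (Serre functor, AR-quiver components of shape $\Z\Delta/G$, Dynkin forcing, slice tilting) is a legitimate route and is essentially how one proves the cited result, but as you yourself note, the step from ``finitely many $[1]$-orbits'' to ``$\Delta$ Dynkin'' and the coherence of the chosen sections across components require nontrivial input (e.g.\ Happel's analysis of the derived category of a finite-dimensional algebra of finite global dimension, or the Riedtmann structure theorem together with additivity arguments). Citing \cite{V,BGS} is the cleaner option here, and it is what the paper does.
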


\begin{lemma}\label{lemmm algebra of finite global dimension}
Let $B$ be a finite-dimensional algebra over $K$, if $K^b(\proj B)$ has finitely many indecomposable objects up to isomorphism and shift of complexes, then $B$ is of finite global dimension. Furthermore, in this case, $D^b(\mod B)$ also has only finitely many indecomposable objects up to isomorphism and shift of complexes.
\end{lemma}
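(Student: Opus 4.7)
The lemma has two assertions, of which the second (that $D^b(\mod B)$ has finitely many indecomposables up to shift of complexes) follows at once from the first: when $\gldim B<\infty$, Happel's theorem identifies $K^b(\proj B)$ with $D^b(\mod B)$ as triangulated categories via the canonical inclusion, so the finite-type hypothesis transfers directly. So the substance is the first claim: $\gldim B<\infty$.

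I would argue by contradiction. Suppose $\gldim B=\infty$. Since the global dimension is attained on the finite set of simple $B$-modules, some simple $S$ has $\pd_B S=\infty$. Fix a minimal projective resolution
\[
\cdots \to P_2 \xrightarrow{d_1} P_1 \xrightarrow{d_0} P_0 \twoheadrightarrow S \to 0,
\]
whose differentials all lie in the Jacobson radical of $B$. For each $n\geq 0$ form the bounded complex of projectives
\[
X_n := (P_n \to P_{n-1} \to \cdots \to P_0) \in K^b(\proj B),
\]
concentrated in degrees $-n,\ldots,0$; each $X_n$ is itself a minimal complex, having only radical differentials.

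The key technical input is the standard theory of minimal complexes over a finite-dimensional algebra: every object of $K^b(\proj B)$ has a unique minimal model up to isomorphism of chain complexes, and two minimal complexes are isomorphic in $K^b(\proj B)$ if and only if they are already isomorphic as chain complexes. Consequently, the width of the supporting interval of a minimal complex is an isomorphism invariant in $K^b(\proj B)$, preserved by shifts. Since $X_n$ has supporting interval $[-n,0]$ of width $n+1$, the $X_n$ are pairwise shift-inequivalent in $K^b(\proj B)$. To close the argument I would decompose $X_n=\bigoplus_j Y_{n,j}$ into indecomposables (each $Y_{n,j}$ itself minimal, supported in a sub-interval of $[-n,0]$) and isolate the distinguished summand $Y_n$ whose degree-zero part contains $P_0$; such a summand is unique, as $P_0$, being the projective cover of the simple $S$, is indecomposable. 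The target is to show that the support width of $Y_n$ tends to infinity with $n$: if it were bounded by some constant $L$, then for $n>L$ the complementary summand of $X_n$ would absorb a full lower tail of the resolution, and the radical differential $d_L\colon P_{L+1}\to P_L$ would be forced to factor through a proper direct summand of $P_L$, contradicting the projective-cover property of $P_L$ over $\Omega^{L-1}S$ and hence the connectivity of the minimal resolution of $S$. This furnishes infinitely many pairwise shift-inequivalent indecomposables in $K^b(\proj B)$, contradicting the hypothesis.

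The principal obstacle is this last step: rigorously ruling out any direct-sum splitting of $X_n$ that isolates a short top piece around $P_0$ from the rest of the truncated resolution. The natural route is to propagate the indecomposability of $P_0$ downward through the minimal differentials, reducing to the structural fact that the minimal projective resolution of a simple module admits no nontrivial splitting into a bounded ``top'' and a complementary ``tail.'' The remaining ingredients --- minimality of truncated resolutions, invariance of support width under isomorphism in $K^b(\proj B)$, and the transfer of finite type to $D^b(\mod B)$ via Happel --- are entirely standard.
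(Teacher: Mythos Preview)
Your overall strategy matches the paper's: argue by contradiction, pick a simple $S$ with $\pd_B S=\infty$, and study the truncations $X_n=(P_n\to\cdots\to P_0)$ of a minimal projective resolution. The divergence is in the final step. The paper simply asserts that each $X_n$ is \emph{indecomposable} in $K^b(\proj B)$, so the $X_n$ themselves give infinitely many shift-inequivalent indecomposables. You instead decompose $X_n$, isolate the summand $Y_n$ containing $P_0$, and try to show its width is unbounded.

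This detour is unnecessary, and your sketched argument for it (that a bounded $Y_n$ would force $d_L$ to factor through a proper summand of $P_L$, contradicting a projective-cover property) is not quite the right mechanism. What actually happens is cleaner and is precisely what your last paragraph gestures toward: if $X_n=Y\oplus Z$ as minimal complexes with $Y^0=P_0$ and $Z^0=0$, then $H^{-1}(Z)=0$ forces the radical map $Z^{-2}\to Z^{-1}$ to be surjective, hence $Z^{-1}=0$; propagating, $Z^{-i}=0$ for $i<n$, and finally $Z^{-n}$ would have to be a projective summand of $P_n$ lying in $\ker(P_n\to P_{n-1})=\Omega^{n+1}S$, contradicting minimality. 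So $Z=0$ and $X_n$ is indecomposable. Once you have this, there is no need for the $Y_n$ machinery: the $X_n$ already have pairwise distinct support widths, and you are done. Your reduction of the second assertion to the first via the identification $K^b(\proj B)\simeq D^b(\mod B)$ when $\gldim B<\infty$ is fine and is exactly what the paper intends.
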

\begin{proof}
Suppose for a contradiction that $B$ has infinite global dimension. Because $\gldim B=\max\{\pd_B S\mid S\text{ is a simple module}\}$, there exists a simple module $S$ with infinite projective dimension. Let
%\begin{equation*}\label{projective resolution}
$\cdots \rightarrow P_n\rightarrow \cdots \to P_1\rightarrow P_0\rightarrow S\rightarrow0$
%\end{equation*}
be a minimal projective resolution of $S$. Then the complex $P_n\rightarrow \cdots P_1\rightarrow P_0$
is an indecomposable object in $K^b(\proj B)$ for any $n\geq0$. As $S$ has infinite projective dimension, we get that $P_i\neq0$ for any $i\geq0$, and then
the complexes $P_n\rightarrow \cdots \to P_1\rightarrow P_0$ are pairwise non-isomorphic even up to shift of complexes. So there are infinitely many indecomposable objects up to isomorphism and shift of complexes, giving a contradiction.
\end{proof}

\begin{proposition}\label{proposition equivalent to hereditary algebra}
Let $A$ be a Gorenstein monomial algebra. Then there exists a hereditary algebra $H$ of finite representation type such that $D_{sg}(\mod^\Z A)\simeq D^b(\mod H)$.
\end{proposition}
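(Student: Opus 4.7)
The plan is to combine Theorem \ref{main theorem} with the CM-finiteness from Lemma \ref{lemma CM-finite}, and then apply Lemmas \ref{lemmm algebra of finite global dimension} and \ref{lemma derived finite category}. First, since $A$ is Gorenstein, Theorem \ref{theorem Buchweitz theorem}(iii) identifies $\underline{\Gproj}^\Z A$ with $D_{sg}(\mod^\Z A)$. By Theorem \ref{main theorem} this category admits a tilting object $T$, and being the stable category of a Frobenius category it is algebraic. I therefore obtain a triangle equivalence $\underline{\Gproj}^\Z A \simeq K^b(\proj B)$, where $B := \End_{\underline{\Gproj}^\Z A}(T)$.

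The main step is to show that $\underline{\Gproj}^\Z A$ has only finitely many isomorphism classes of indecomposables up to the triangulated shift $[1]$. By Theorem \ref{theorem bijection of perfect path and Gorenstein projective modules} and Lemma \ref{lemma structure of Gorenstein projective module}, the indecomposables of $\Gproj^\Z A$ are precisely the graded modules $Ap(n)$ for a perfect path $p$ of $A$ and $n \in \Z$, and in $\underline{\Gproj}^\Z A$ one restricts to non-projective $p$. The shift $[1]$ agrees with the cosyzygy $\Omega^{-1}$, and a graded version of the short exact sequence (\ref{the syzygy functor of Gorenstein projective modules}) shows that $\Omega$ sends $Ap_{i+1}(n)$ to $Ap_i(n + d_i)$ for each perfect pair $(p_i, p_{i+1})$, with $d_i > 0$ determined by the length of $p_{i+1}$. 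Tracing once around a relation-cycle $p_1, \ldots, p_m, p_{m+1} = p_1$ one obtains $[1]^m(Ap_1(n)) \cong Ap_1(n - D)$, where $D := \sum_{i=1}^m d_i > 0$. Hence within this cycle the $[1]$-orbits of indecomposables are parameterised by $n$ modulo $D$, contributing only $D$ orbits. Since Lemma \ref{lemma CM-finite} supplies finitely many perfect paths, there are only finitely many relation-cycles, hence only finitely many $[1]$-orbits in $\underline{\Gproj}^\Z A$ in total.

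Transporting this across the equivalence, $K^b(\proj B)$ has finitely many indecomposables up to shift; Lemma \ref{lemmm algebra of finite global dimension} then forces $\gldim B < \infty$, yields $K^b(\proj B) \simeq D^b(\mod B)$, and ensures that $D^b(\mod B)$ still has finitely many indecomposables up to shift. Finally Lemma \ref{lemma derived finite category} produces a hereditary algebra $H$ of finite representation type with $D^b(\mod B) \simeq D^b(\mod H)$, and composing equivalences delivers $\underline{\Gproj}^\Z A \simeq D^b(\mod H)$, as required. The principal obstacle is the counting of $[1]$-orbits: the graded category carries infinitely many indecomposables $Ap(n)$ (one for each integer $n$), so raw CM-finiteness is insufficient, and one must exploit how $\Omega$ interacts with the degree shift so that iterating $[1]$ a bounded number of times produces a fixed, nonzero shift in degree, collapsing the countably many indecomposables into finitely many $[1]$-orbits.
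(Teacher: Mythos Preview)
Your proposal is correct and follows essentially the same route as the paper: obtain a tilting object from Theorem~\ref{main theorem}, pass to $K^b(\proj B)$, use the relation-cycle structure of perfect paths together with the graded exact sequence~(\ref{the syzygy functor of Gorenstein projective modules}) to show that iterating the syzygy returns each $Ap$ to itself up to a strictly positive degree shift, conclude that there are only finitely many indecomposables up to the triangulated shift, and then invoke Lemmas~\ref{lemmm algebra of finite global dimension} and~\ref{lemma derived finite category}. The only cosmetic difference is that the paper bounds the number of orbits by taking $m=\max_p m_p$ and working with the finite set $\bigcup_{0\le i<m}\Ind\underline{\Gproj}(A)(i)$, whereas you count the $[1]$-orbits cycle by cycle; both lead to the same finiteness statement.
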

\begin{proof}
Since $D_{sg}(\mod^\Z A)\simeq \underline{\Gproj}^\Z A$, Theorem \ref{main theorem} implies that there is a tilting object $T$ in $\underline{\Gproj}^\Z A$. Without loss of generality, we assume that $T$ is basic. Denote by $B=\End_{\underline{\Gproj}^\Z A}(T)^{op}$. Then $\underline{\Gproj}^\Z A\simeq K^b(\proj B)$.

In the following we prove that $\Gproj^\Z A$ has only finitely many indecomposable objects up to isomorphism and the syzygy functor $\Omega$.
For any indecomposable object in $\underline{\Gproj}^\Z A$,
there exists a perfect path $p$ such that it is isomorphic to $Ap$ up to some certain shift of degree. By Definition \ref{definition of perfect path}, there exists a sequence
$$p=p_1,p_2,\dots,p_{n},p_{n+1}=p$$
of nonzero paths such that $(p_i,p_{i+1})$ are perfect pairs for all $1\leq i\leq n$.
Then $A p_i$ is Gorenstein projective for $1\leq i\leq n$, and $\Omega (Ap_{i+1})$ is isomorphic to $Ap_{i}(-m_{i})$ by (\ref{the syzygy functor of Gorenstein projective modules}) for some $m_{i}$, and then
$\Omega^{n}(Ap)\cong Ap(-m_p)$ for some $m_p$ in $\underline{\Gproj}^\Z A$. Obviously, $m_p>0$.

For any indecomposable object $X$ in $\underline{\Gproj}^\Z A$, $X\cong Ap(i)$ for some perfect path $p$ and $i\in\Z$. There exists an integer $0\leq j<m_p$ satisfying $i\equiv j(\mod m_p)$, and then $X$ is isomorphic to $Ap(-j)$ up to $\Omega$. Let $m=\max\{m_p\mid p\text{ is a perfect path in } A\}$. Then $m<\infty$ since there are only finitely many perfect paths in $A$. By viewing each indecomposable $X\in\Gproj A$ as a graded Gorenstein projective module with $\Top(X)$ degree zero, let $\cx=\bigcup_{0\leq i<m}\Ind\underline{\Gproj}(A)(-i)$. Then $\cx$ is a finite set, and for any indecomposable object $X$ in $\underline{\Gproj}^\Z A$, there exists an object $Y\in\cx$ such that $X$ is isomorphic to $Y$ up to  $\Omega$. So there are only finitely many indecomposable objects in $\underline{\Gproj}^\Z A$ up to  $\Omega$.

For the triangle equivalence $\underline{\Gproj}^\Z A\simeq K^b(\proj B)$, the shift of complexes in $K^b(\proj B)$ corresponds to the cosyzygy functor $\Omega^{-1}$ in $\underline{\Gproj}^\Z A$. So there are only finitely many indecomposable objects in $K^b(\proj B)$ up to shift of complexes. Lemma \ref{lemmm algebra of finite global dimension} implies $K^b(\proj B)\simeq D^b(\mod B)$, together with Lemma \ref{lemma derived finite category}, there exists a hereditary algebra $H$ of finite representation type such that $\underline{\Gproj}^\Z A\simeq D^b(\mod H)$. Then our desired result follows.
\end{proof}

Note that the hereditary algebra $H$ is not unique in Proposition \ref{proposition equivalent to hereditary algebra}, however, it is unique up to derived equivalences.

Recall that a monomial algebra $A=KQ/I$ is called to be \emph{quadratic monomial} provided that the ideal $I$ is generated by paths of length two.
\begin{corollary}
Let $A$ be a Gorenstein quadratic monomial algebra. Then there exists a hereditary algebra $B=KQ$ with $Q$ a union of finitely many quivers of type $\A_1$ such that $D_{sg}(\mod^\Z A)\simeq D^b(\mod B)$.
\end{corollary}
\begin{proof}
Similar to the proof of Theorem \ref{main theorem}, we take a positive integer $l$ such that $A=A_{\leq l}$ and define $T:=\bigoplus_{0\leq i\leq l} A(i)_{\leq0}.$
Then there exists a short exact sequence
$$0\rightarrow A_T\xrightarrow{a} N_T\xrightarrow{b} T\rightarrow0$$
where $N_T$ is a graded Gorenstein projective $A$-module, and $A_T$ is a graded $A$-module of finite projective dimension. By the proof of Theorem \ref{main theorem}, $N_T$ is a tilting object in $\underline{\Gproj}^\Z A$.

Without loss of generality, we assume that $N_T$ is basic, and $N_T=Ap_1(r_1)\oplus \cdots\oplus Ap_n(r_n)$. By \cite[Theorem 5.7]{CSZ}, we get that $\underline{\Gproj}A\simeq \ct_{d_1}\times \ct_{d_2}\times\cdots\times \ct_{d_m}$, where
$\ct_{d_i}=D^b(\mod K)/[d_i]$ is the triangulated orbit category in the sense of \cite{Ke}. So $\Hom_{\underline{\Gproj} A}(Ap_i,Ap_j)=0$ for any $i\neq j$.
From the fact that $\Hom_{\underline{\Gproj}^\Z A}(Ap_i(r_i),Ap_j(r_j))$ is a subset of $\Hom_{\underline{\Gproj} A}(Ap_i,Ap_j)$,
we find that
$$\End_{\underline{\Gproj}^\Z A}(N_T)\cong \overbrace{K\times K\times \cdots K}^{n}.$$
Therefore,
there exists a hereditary algebra $B=KQ$ with $Q$ a union of finitely many quivers of type $\A_1$ such that $D_{sg}(\mod^\Z A)\simeq\underline{\Gproj}^\Z A\simeq D^b(\mod B)$.
\end{proof}

\begin{corollary}
Let $A$ be a Gorenstein Nakayama algebra. Then there exists a hereditary algebra $B=KQ$ with $Q$ a union of finitely many quivers of type $\A$ such that $D_{sg}(\mod^\Z A)\simeq D^b(\mod B)$.
\end{corollary}
\begin{proof}
From \cite[Proposition 1]{Rin}, we get that $\underline{\Gproj} A$ is triangulated equivalent to the stable category of a self-injective Nakayama algebra, which is also triangulated equivalent to the triangulated orbit category $D^b(\mod H)/[n]$ ($n\geq1$), where $H$ is a hereditary algebra of type $\A$ and $[1]$ is the shift functor of $D^b(\mod H)$. Theorem \ref{main theorem} shows that $\underline{\Gproj}^\Z A$ is triangulated equivalent to a hereditary algebra of finite representation type.

Suppose for a contradiction that there exists a connected component $\cc$ of $\underline{\Gproj}^\Z A$ such that it is equivalent to the derived category of a hereditary algebra of type $\D$ or $\E$. Then there exists an almost split triangle $$L\rightarrow M_1\oplus M_2\oplus M_3\rightarrow N\rightarrow \Sigma L$$
in $\underline{\Gproj}^\Z A$ for some nonzero indecomposable modules $L,M_1,M_2,M_3,N\in{\Gproj}^\Z A$. So there exists a projective module $P$ such that $L\rightarrow M_1\oplus M_2\oplus M_3 \oplus P\rightarrow N$ is an almost split sequence. As the forgetful functor $F:\Gproj^\Z A\rightarrow \Gproj A$ is dense, it is easy to see that it preserves almost split sequences, and then
$F(L)\rightarrow F(M_1)\oplus F(M_2)\oplus F(M_3) \oplus F(P)\rightarrow F(N)$ is an almost split sequence in $\Gproj A$, which implies that $F(L)\rightarrow F(M_1)\oplus F(M_2)\oplus F(M_3)\rightarrow F(N)\rightarrow  F(L)[1]$ is an almost split triangle in $\underline{\Gproj}A$, giving a contradiction to the fact that $\underline{\Gproj} A$ is triangulated equivalent to the stable category of a self-injective Nakayama algebra. So every connected component of $\underline{\Gproj}^\Z A$ is equivalent to the derived category of a hereditary algebra of type $\A$.
Thus our desired result follows from Buchweitz's Theorem.
\end{proof}

Note that C. M. Ringel gives a description of $\underline{\Gproj} A$ for any Nakayama algebras \cite{Rin}, recently, D. Shen gives a characterization of Gorenstein Nakayama algebras \cite{Shen}.

Recall that $A$ is a \emph{self-injective Nakayama algebra} if and only if $A=K$ or there exists an oriented cycle $Z_n$ with the vertex set $\{1,2,\dots,n\}$ and the arrow set $\{\alpha_1,\dots,\alpha_n\}$, where $s(\alpha_i)=i$ and $t(\alpha_i)=i+1$, such that $A$ is isomorphic to $KZ_n/J^d$ for some $d\geq2$, where $J$ denotes the two-sided ideal of $KZ_n$ generated by arrows.

The following remark may be known to some experts.
\begin{remark}
Let $A=KQ/I$ be a monomial algebra, where $Q$ is connected. Then $A$ is self-injective if and only if $A$ is a self-injective Nakayama algebra.
\end{remark}
\begin{proof}
We only need to prove the ``only if'' part. It is enough to prove that $Q$ is an oriented cycle, and the proof is to analyse the structures of projective and injective modules.

 For any vertex $i\in Q$, its corresponding indecomposable projective module $P_i$ is as Fig. 5. shows, and its corresponding indecomposable projective module $I_i$ is as Fig. 6. shows. Since $A$ is self-injective, $P_i$ is an indecomposable injective module as Fig. 6. shows, which implies that $P_i$ (and also $I_i$) is a string module with its string of the form
$\cdot\rightarrow \cdot\rightarrow\cdots\rightarrow \cdot\rightarrow\cdot.$
If $A\neq K$, then for any vertex $i$, there is at most one arrow starting from $i$, and at most one arrow ending at $i$. Because $Q$ is connected and $A$ is self-injective which is not isomorphic to $K$, we get that there is no sink vertex and no source vertex in $Q$. So for any vertex $i$, there is only one arrow starting from $i$, and only one arrow ending at $i$, and then $Q\cong Z_n$ for some $n$.
Therefore, $A\cong KZ_n/I$ which is a Nakayama algebra, and then $A$ is a self-injective Nakayama algebra.
\end{proof}

\begin{remark}
For a Gorenstein monomial algebra $A=KQ/I$, there are many ways to make it to be a positively graded algebra, not only by setting each arrow to be degree one. For any positively grading on $A$ such that its zero part $A_0$ satisfying $\gldim A_0<\infty$, similarly one can show that $\underline{\Gproj}^\Z A$ admits a tilting object by the same construction in Theorem \ref{main theorem}, furthermore, all the results in this section also hold. %In other words, the results in this section do not depend on the grading.
\end{remark}

\section{Characterization of $1$-Gorenstein monomial algebras}\label{section 5}

\subsection{Characterization of $1$-Gorenstein monomial algebras}
In this subsection, we give a characterization of $1$-Gorenstein monomial algebras $KQ/I$ by using the minimial paths in $I$, which is a generalization of the characterization of $1$-Gorenstein gentle algebras; see \cite[Proposition 3.1]{CL}.

First, we fix some notations. Let $A=KQ/I$ be an algebra. For any vertex $i$ in $Q$, we denote by $P_i$ the corresponding indecomposable projective module and $S_i$ the corresponding simple module.

\begin{lemma}\label{lemma not perfect pair}
Let $A=KQ/I$ be a monomial algebra. Let $\bF$ be the set formed by all the minimal paths among the paths in $I$. If there exist nontrivial paths $p,q$ such that
$pq\in\bF$ and $p$ is not a perfect path, then $A$ is not $1$-Gorenstein.
\end{lemma}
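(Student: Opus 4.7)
The plan is to argue by contradiction: suppose $A$ is $1$-Gorenstein. Then by Theorem~\ref{theorem characterize of gorenstein property} applied with $d=1$, we have $\Gproj A = \Omega^1(\mod A)$, so every first syzygy in $\mod A$ is Gorenstein projective.

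First, I would note that the minimality of $pq$ in $\bF$ forces $p \in L(q)$: no proper right factor $p'$ of $p$ can satisfy $p'q = 0$, for otherwise $p'q$ would be a proper subpath of $pq$ lying in $I$, contradicting $pq \in \bF$. Consider then the projective cover $\pi\colon A e_{t(q)} \twoheadrightarrow Aq$, $e_{t(q)} \mapsto q$, and let $K = \Omega(Aq) = \ker\pi$. By the assumption $K$ is Gorenstein projective, and since $pq=0$ the inclusion $Ap \hookrightarrow A e_{t(q)}$ factors through $K$, yielding $Ap \subseteq K$.

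The next step is to show that $Ap$ is a direct summand of $K$, so that $Ap$ itself is Gorenstein projective. When $L(q)=\{p\}$ this is immediate since then $K=Ap$; in general $K$ is generated by the submodules $\{Aa : a \in L(q)\}$, and one analyses their intersections to extract $Ap$ as a summand. Once $Ap$ is known to be Gorenstein projective, it is indecomposable (being cyclic with simple top $S_{t(p)}$), so by Theorem~\ref{theorem bijection of perfect path and Gorenstein projective modules} it is either projective or isomorphic to $Ap^*$ for some perfect path $p^*$. The target is then to deduce that $p$ itself must be perfect, contradicting the hypothesis; this last step would proceed by iterating the syzygy functor, noting that any non-projective $Ap^*$ satisfies $\Omega^n(Ap^*) \cong Ap^*$ in $\underline{\Gproj}\,A$ for $n$ the length of the relation-cycle of $p^*$, and tracing this periodicity back through the exact sequence above forces $p$ to sit in its own relation-cycle.

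The main obstacle I foresee is bridging the gap between the abstract module-theoretic statement ``$Ap$ is Gorenstein projective'' and the combinatorial statement ``$p$ is a perfect path.'' The cleanest way to handle both the summand issue and this identification is likely to reduce, by replacing $q$ with some $q' \in R(p)$ if necessary, to the case where $L(q)=\{p\}$ and $R(p)=\{q\}$; then $(p,q)$ already satisfies (P1)--(P3) of a perfect pair, and one can inductively build the required relation-cycle for $p$ by applying the same construction to $q$ and the subsequent paths, using that $Aq$ inherits the Gorenstein-projective-syzygy property from the $1$-Gorenstein assumption.
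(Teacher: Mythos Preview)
Your approach has a genuine gap: the module $Ap$ can be projective, in which case the bijection of Theorem~\ref{theorem bijection of perfect path and Gorenstein projective modules} says nothing and no contradiction is obtained. For a concrete instance, take $Q\colon 1 \xrightarrow{\alpha} 2 \xrightarrow{\beta} 3$ with $I=(\beta\alpha)$; here $p=\beta$, $q=\alpha$, $pq\in\bF$, and $A\beta=S_3=P_3$ is projective, while $\beta$ is not a perfect path (there is no arrow out of $3$ to continue a relation-cycle) and $A$ is genuinely not $1$-Gorenstein. Your proposed reduction to $L(q)=\{p\}$ and $R(p)=\{q\}$ and the inductive construction of a relation-cycle do not help in this situation, since there is simply nothing on the left of $p$ to iterate with. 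The direct-summand extraction of $Ap$ from $\Omega(Aq)$ is also left unsettled, but that is secondary compared to the projectivity issue.

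The paper sidesteps both problems by working with $Aq$ rather than with $Ap$ or $\Omega(Aq)$. The point is that $Aq$ sits inside $P_{s(q)}=Ae_{s(q)}$, hence is torsionless; under the $1$-Gorenstein hypothesis torsionless modules are exactly the Gorenstein projectives, so $Aq$ is Gorenstein projective with no summand argument needed. It has simple top $S_{t(q)}$, so it is indecomposable, and it is \emph{non}-projective because $pq=0$ shows the canonical surjection $Ae_{t(q)}\twoheadrightarrow Aq$ has nonzero kernel. Theorem~\ref{theorem bijection of perfect path and Gorenstein projective modules} then gives $Aq=Aq'$ for a perfect path $q'$, and one lets $(p',q')$ be the associated perfect pair. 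The finish is purely combinatorial: from $pq'=0$ and $L(q')=\{p'\}$ one gets $p=p''p'$; writing $p=\alpha_n\cdots\alpha_1$, minimality of $pq\in\bF$ forces $\alpha_{n-1}\cdots\alpha_1 q\neq 0$, hence $\alpha_{n-1}\cdots\alpha_1\in Aq=Aq'$, hence $\alpha_{n-1}\cdots\alpha_1 q'\neq 0$; since $\alpha_n(\alpha_{n-1}\cdots\alpha_1)q'=pq'=0$, the factor $p''$ must be trivial, i.e.\ $p'=p$. Thus $(p,q')$ is a perfect pair with $q'$ perfect, so $p$ is perfect, the desired contradiction.
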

\begin{proof}
Suppose for a contradiction that $A$ is $1$-Gorenstein.
For the indecomposable projective module $P_{s(q)}$, which has a basis formed by nonzero paths starting at $s(q)$, it has a submodule $M$ with a basis $\{u\mid uq\text{ is a nonzero path in } A\}$.
Obviously, $\Top(M)=S_{t(q)}$, and then $M$ is indecomposable. Furthermore, since $A$ is $1$-Gorenstein, it is well known that the Gorenstein pojective modules coincide with the
torsionless modules (see e.g. \cite[Remark 2.6]{CGLu}), and then $M$ is an indecomposable Gorenstein projective module.
In the following, we prove that $M$ can not be Gorenstein projective by using the notions of perfect pairs, perfect paths, and Theorem \ref{theorem bijection of perfect path and Gorenstein projective modules}, which gives a contradiction.

It is easy to see that the nonzero path $p\notin M$, and then $M$ is not projective.
Theorem \ref{theorem bijection of perfect path and Gorenstein projective modules} implies that there exists a perfect path $q'$ such that $M=Aq'$. Let $(p',q')$ be the perfect pair. Obviously, $s(p')=t(q')=t(q)$. It follows from $p\notin M$ that $pq'=0$ and then $p=p''p'$ for some nonzero path $p''$ by definition. Claim $p'=p$. If the claim holds, we have $(p,q')$ is a perfect pair. From the fact $q'$ is a perfect path, $p$ is also a perfect path, giving a contradiction. Therefore, $A$ is not $1$-Gorenstein.

For the claim, denote by $p=\alpha_n\cdots\alpha_1$ with $\alpha_i\in Q_1$ for $1\leq i\leq n$. By $pq=\alpha_n\cdots\alpha_1q\in \bF$, we find that $\alpha_{n-1}\cdots \alpha_1q$ (equals to $q$ if $n=1$) is nonzero, and then $\alpha_{n-1}\cdots \alpha_1$ is in $M$. So
$\alpha_{n-1}\cdots \alpha_1q'$ is nonzero. By noting that $pq'=\alpha_n\alpha_{n-1}\cdots \alpha_1q'$ is zero, we get that $p'=p$. The claim is proved.
\end{proof}

\begin{lemma}\label{lemma Gorenstein projective modules for 1-Gorenstein modules}
Let $A=KQ/I$ be a monomial algebra, $\bF$ be the set formed by all the minimal paths among the paths in $I$. Assume that every nonzero path $p$ with the property that there exists a nonzero path $q$ such that $qp\in\bF$ is a perfect path. Then $Ap$ is an indecomposable Gorenstein projective $A$-module for any nonzero path $p$.
\end{lemma}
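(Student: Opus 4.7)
The plan is to prove that $Ap$ is Gorenstein projective; indecomposability follows immediately from the simplicity of $\Top(Ap)\cong S_{t(p)}$. If $p=e_i$ is trivial or if $Ap$ happens to be projective, there is nothing to prove, so I focus on the case when $p$ is nontrivial and $Ap$ is non-projective, which forces $L(p)\neq\emptyset$. The strategy is to realise $Ap$ as isomorphic to $Aq'$ for some perfect path $q'$ and then apply Theorem \ref{theorem bijection of perfect path and Gorenstein projective modules}.

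First I show that every $q\in L(p)$ is a perfect path whose successor $q_2$ in its relation-cycle is a nontrivial prefix of $p$. Since $qp=0$, the path $qp$ contains some $\sigma\in\bF$ as a subpath, which must straddle the $q/p$-boundary; write $\sigma=\mu\nu$ with $\mu$ a nontrivial suffix of $q$ and $\nu$ a nontrivial prefix of $p$. Right-minimality of $q$ in $L(p)$ forces $\mu=q$, for otherwise $\mu$ would be a proper, shorter suffix of $q$ still annihilating $p$ (inheriting $\sigma$ as a subpath of $\mu p$), contradicting right-minimality. Thus $q\nu\in\bF$, so the hypothesis makes $q$ perfect. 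Writing $(q,q_2)$ for the resulting perfect pair, property (P2) combined with the minimality of $qq_2\in\bF$ pins $\nu$ down uniquely as $q_2$: any $\kappa$ with $q\kappa\in\bF$ factors as $\kappa=q_2\kappa'$ by (P2), and if $\kappa'$ were nontrivial then $q\kappa$ would contain $qq_2\in\bF$ as a proper subpath, contradicting $q\kappa\in\bF$.

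Now let $\mathcal{P}$ be the set of perfect paths $\tilde q$ whose successor $\tilde q_2$ is a prefix of $p$. The previous paragraph shows $\mathcal{P}\neq\emptyset$, and since $|\tilde q_2|\leq|p|$, I can choose $\tilde q\in\mathcal{P}$ with $|\tilde q_2|$ maximal. If $\tilde q_2=p$, then $p$ lies on the relation-cycle of $\tilde q$ and is itself perfect, so Theorem \ref{theorem bijection of perfect path and Gorenstein projective modules} finishes the argument. Otherwise write $p=\tilde q_2 p^*$ with $p^*$ nontrivial, and consider the surjective $A$-module map $\phi\colon A\tilde q_2\to Ap$ given by $x\mapsto xp^*$. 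For injectivity, assume some $y\tilde q_2\neq 0$ satisfies $yp=0$; a minimal subpath of $yp$ in $\bF$ cannot lie entirely inside $y\tilde q_2$ (that would force $y\tilde q_2=0$), so it must extend beyond $\tilde q_2$ into $p^*$. This produces $\mu'\tilde q_2\pi\in\bF$ with $\mu'$ a nontrivial suffix of $y$ and $\pi$ a nontrivial prefix of $p^*$. By the hypothesis $\mu'$ is perfect, and the same uniqueness-of-successor argument as in Paragraph~2 identifies $\mu'$'s successor as $\tilde q_2\pi$, a prefix of $p$ strictly longer than $\tilde q_2$, contradicting the maximality of $|\tilde q_2|$ in $\mathcal{P}$. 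Hence $\phi$ is an isomorphism and $Ap\cong A\tilde q_2$ is Gorenstein projective because $\tilde q_2$ is perfect. The main obstacle is this final injectivity step: it requires the correct choice of the ``maximal prefix'' object $\tilde q_2$ so that every conceivable straddling subpath of $yp$ funnels back into $\mathcal{P}$ via a strictly longer successor, which is what makes the contradiction clean.
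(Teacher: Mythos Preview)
Your proof is correct and follows essentially the same approach as the paper: both arguments identify a perfect path $q_2$ (the paper calls it $p'$) occurring as a prefix of $p$ via $L(p)$ and the hypothesis, and then establish $Ap\cong Aq_2$. The only difference is organizational---you select the prefix $\tilde q_2$ of maximal length upfront, so that the injectivity of $\phi$ yields an immediate contradiction, whereas the paper takes an arbitrary $q\in L(p)$ and absorbs the corresponding case analysis (comparing $|p'|$ with $|p_3|$) into the basis comparison for $Ap\cong Ap'$.
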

\begin{proof}
We only consider the case for $Ap$ to be non-projective.

First, $Ap$ is indecomposable for any nonzero path $p$. If $Ap$ is not projective, i.e., $Ap$ is not isomorphic to $P_{t(p)}$, then there exists a nonzero path $q'$ such that $q'p\in I$, which implies that $L(p)$ is nonempty. Let $q$ be a path in $L(p)$.
Obviously, $R(q)$ is nonempty. As $qp\in I$, there exists $p'\in R(q)$ such that $p=p'p''$. In the following, we shall prove that
\begin{align}
\label{eq:perf}
qp'\in\bF,\qquad
Ap=Ap'
\end{align}
by using the notions of perfect pairs and perfect paths, and Theorem \ref{theorem bijection of perfect path and Gorenstein projective modules}.
If \eqref{eq:perf} holds, from the hypothesis, we get that $(q,p')$ is a perfect pair, and then $p'$ is a perfect path. So $Ap= Ap'$ is a non-projective Gorenstein projective module by Theorem \ref{theorem bijection of perfect path and Gorenstein projective modules}.

For the first formula in \eqref{eq:perf}, since $qp'\in I$ and $p',q$ are nonzero, there exist nonzero paths $q_1,q_2,p_1,p_2$ such that $q=q_2q_1$, $p'=p_1p_2$ and $q_1p_1\in\bF$.
Then $qp_1=q_2q_1p_1\in I$, which implies that there exists a path $p_1'\in R(q)$ such that $p_1=p_1'p_1''$ for some nonzero path $p_1''$. On the other hand, $p'=p_1p_2=p_1'p_1''p_2$, which is also in $R(q)$. So $p_1''$ and $p_2$ are trivial paths, and then $p'=p_1$.
Similarly, from $q_1p=q_1p_1p_2p''\in I$ and $q=q_2q_1\in L(p)$, we get that $q_2$ is trivial, and then $q=q_1$. Therefore, $qp'=q_1p_1\in\bF$.

For the second formula in \eqref{eq:perf}, $Ap$ (resp. $Ap'$) has a basis $\cs$ (resp. $\cs'$) given by all nonzero paths $q'$ such that $q'p\notin I$ (resp. $q'p'\notin I$). Since $p=p'p''$, it is easy to see that $\cs\subseteq \cs'$.

Conversely, suppose for a contradiction that there exists a nonzero path $q'$ satisfying that $q'p'\notin I$, and $q'p\in I$. Without loss of generality, assume that $q'\in L(p)$. Similar to the above, there exists a nonzero path $p_3$ such that $q'p_3\in\bF$ and $p=p_3p_4$ for some nonzero path $p_4$.
Together with $p=p'p''$, we get that either $l(p')\geq l(p_3)$ or $l(p')\leq l(p_3)$. If $l(p')\geq l(p_3)$, then $p'=p_3p_5$ for some nonzero path $p_5$.
So $q'p'=q'p_3p_5\in I$, giving a contradiction. If $l(p')\leq l(p_3)$, then $p_3=p'p_6$ for some nonzero path $p_6$, which yields that $qp_3=qp'p_6\in I$. From $q'p_3\in\bF$, we get that $(q',p_3)$ is a perfect pair, and then there exists a path $q_1$ such that $q=q_1q'$. On the other hand, $q',q\in L(p)$, which implies that $q_1$ is trivial and $q=q'$. Since both of $(q,p')$ and $(q',p_3)$ are perfect pairs, we get that $p'=p_3$.
Then $q'p'=qp'\in\bF$, giving a contradiction to that $q'p'\notin I$.

Therefore, $Ap$ is isomorphic $Ap'$ as $A$-modules, and then $Ap$ is a non-projective Gorenstein projective module.
%To sum up, $Ap$ is an indecomposable Gorenstein projective $A$-module for any nonzero path $p$.
\end{proof}

From Fig. 5., it is easy to get the following lemma, which is a special case of \cite[Theorem 2.2]{B}.

\begin{lemma}\label{lemma first sysyzy for monomial algebra}
Let $A=KQ/I$ be a monomial algebra. Then for any semisimple module $M$, the first syzygy module of $M$ is isomorphic to a direct sum $\oplus Ap^{(\Lambda(p))}$, where $p$ runs over all the nonzero paths in $A$ and each $\Lambda(p)$ is some index set.
\end{lemma}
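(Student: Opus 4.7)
The plan is to reduce the statement to a per-simple-module computation, and then to use the explicit basis of the indecomposable projective $P_i=Ae_i$ of a monomial algebra, as pictured in Fig.~7.

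First, I would decompose $M=\bigoplus_{i\in Q_0}S_i^{(\Lambda_i)}$ into its isotypic components. Since the first syzygy, computed via minimal projective covers, is additive on direct sums in $\mod A$, it suffices to describe $\Omega(S_i)$ for each vertex $i$ and then reassemble.

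Next, I would describe $P_i=Ae_i$ explicitly. In the paper's convention, a path $p=\alpha_n\cdots\alpha_1$ satisfies $s(p)=s(\alpha_1)$, and the nonzero paths in $A$ starting at $i$ form a $k$-basis of $Ae_i$. Hence $\Omega(S_i)=\rad(P_i)$ is the $k$-span of the nonzero paths of length at least one starting at $i$, which is precisely the branching tree pictured below the top vertex in Fig.~7. The key step is then to verify the decomposition
\[
\rad(P_i)=\bigoplus_{\alpha\in Q_1,\,s(\alpha)=i}A\alpha,
\]
which follows from the observation that each nonzero path $p=\alpha_n\cdots\alpha_1$ starting at $i$ factors uniquely as $p=(\alpha_n\cdots\alpha_2)\cdot\alpha_1$, where $\alpha_1$ is the unique arrow starting at $i$ appearing as the rightmost letter of $p$. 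This uniqueness makes the sum direct and exhausts the radical.

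Putting the pieces together, $\Omega(M)\cong\bigoplus_{\alpha\in Q_1}(A\alpha)^{(\Lambda_{s(\alpha)})}$; setting $\Lambda(p)=\Lambda_{s(p)}$ when $p$ is an arrow and $\Lambda(p)=\emptyset$ for paths of length $\geq 2$ yields the stated decomposition indexed by all nonzero paths. The only delicate point is the directness in the key display, which is really just the unique-first-arrow observation, so there is no substantive obstacle; this matches the authors' appeal to Fig.~7.
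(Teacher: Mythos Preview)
Your argument is correct and is precisely the fleshed-out version of what the paper intends: the paper does not give a proof beyond the sentence ``From Fig.~7, it is easy to get the following lemma, which is a special case of \cite[Theorem 2.2]{B},'' and your reduction to simples together with the decomposition $\rad(Ae_i)=\bigoplus_{\alpha\in Q_1,\,s(\alpha)=i}A\alpha$ is exactly the content of that appeal to Fig.~7. In particular, your observation that only arrows (not longer paths) are needed to index the summands is the expected refinement of the statement.
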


\begin{theorem}\label{proposition characterize of 1 Gorenstein monomial algebras}
Let $A=KQ/I$ be a monomial algebra. Let $\bF$ be the set formed by all the minimal paths among the paths in $I$. Then $A$ is $1$-Gorenstein if and only if every nonzero path $p$ with the property that there exists a nonzero path $q$ such that $qp\in\bF$ is a perfect path.
\end{theorem}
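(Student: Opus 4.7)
The proof is essentially a packaging of the two preparatory lemmas plus a standard reduction. Let me describe how I would assemble it.

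The necessity is immediate: it is exactly the contrapositive of Lemma \ref{lemma not perfect pair}. Indeed, that lemma says that if some $p$ with $pq\in\bF$ fails to be perfect then $A$ cannot be $1$-Gorenstein; so if $A$ is $1$-Gorenstein, every such $p$ must be perfect. I would state this in one line.

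For the sufficiency, assume the hypothesis on nonzero paths. The plan is to invoke the criterion of Theorem \ref{theorem characterize of gorenstein property}: $A$ is $1$-Gorenstein if and only if $\Omega(\mod A)\subseteq\Gproj A$. First, by Lemma \ref{lemma Gorenstein projective modules for 1-Gorenstein modules}, every $Ap$ with $p$ a nonzero path is (indecomposable) Gorenstein projective. Next, by Lemma \ref{lemma first sysyzy for monomial algebra}, for any semisimple $M$ the first syzygy $\Omega(M)$ is a direct sum of modules of the form $Ap$, hence is Gorenstein projective. Finally, I would bootstrap from simples to arbitrary $M\in\mod A$ by induction on the Loewy (or composition) length: given a short exact sequence $0\to M'\to M\to M''\to0$, the horseshoe lemma yields a short exact sequence
\[
0\to \Omega(M')\to \Omega(M)\oplus P\to \Omega(M'')\to 0
\]
for some projective $P$; since $\Gproj A$ is closed under extensions and direct summands (standard facts, cf.\ Lemma \ref{lemma property of gorenstein projective modules}), $\Omega(M')\in\Gproj A$ and $\Omega(M'')\in\Gproj A$ forces $\Omega(M)\in\Gproj A$. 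The induction base (simple, equivalently semisimple $M$) is handled above, so $\Omega(\mod A)\subseteq\Gproj A$ and Theorem \ref{theorem characterize of gorenstein property} concludes that $A$ is $1$-Gorenstein.

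There is no real obstacle left, since both lemmas do the combinatorial heavy lifting: Lemma \ref{lemma not perfect pair} produces a non-projective indecomposable torsionless module that is not Gorenstein projective whenever the hypothesis fails (blocking $1$-Gorensteinness via $\Gproj A\supsetneq\Omega^1(\mod A)$ equivalent statements), and Lemma \ref{lemma Gorenstein projective modules for 1-Gorenstein modules} produces enough Gorenstein projectives under the hypothesis to cover all first syzygies of simples via Lemma \ref{lemma first sysyzy for monomial algebra}. The only place where care is required is the horseshoe step: I would double-check that one really needs no finiteness assumption beyond $\dim_k A<\infty$ (which guarantees finite composition length), and that the closure of $\Gproj A$ under extensions and summands is invoked correctly.
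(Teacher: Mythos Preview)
Your proposal is correct and follows essentially the same route as the paper: necessity via the contrapositive of Lemma \ref{lemma not perfect pair}, sufficiency by showing $\Omega(\mod A)\subseteq\Gproj A$ using Lemmas \ref{lemma first sysyzy for monomial algebra} and \ref{lemma Gorenstein projective modules for 1-Gorenstein modules} for the semisimple base case and an induction on Loewy length via the horseshoe construction for the general case. The only cosmetic difference is that the paper spells out the horseshoe step as a $3\times3$ diagram built from the radical filtration $0\to\rad(M)\to M\to\Top(M)\to0$, whereas you invoke the horseshoe lemma abstractly; both rely on closure of $\Gproj A$ under extensions (and you additionally, but harmlessly, invoke closure under summands to strip the projective summand).
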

\begin{proof}
It follows from Lemma \ref{lemma not perfect pair} that if $A$ is $1$-Gorenstein, then for any nonzero path $p$ with the property that there exists a nonzero path $q$ such that $qp\in\bF$, we have that $p$ is a perfect path.

Conversely, by Theorem \ref{theorem characterize of gorenstein property}, we only need to check that $\Omega(\mod A)=\Gproj A$.

First, by Lemma \ref{lemma first sysyzy for monomial algebra} and Lemma \ref{lemma Gorenstein projective modules for 1-Gorenstein modules}, for any semisimple module, its first syzygy is Gorenstein projective.
For any finite-dimensional module $M$, by induction, we assume that the first syzygy of $\rad(M)$ is Gorenstein projective. Then there are exact sequences
$$0\rightarrow \rad(M)\rightarrow M\rightarrow\Top(M)\rightarrow0,\quad 0\rightarrow N_2\rightarrow P_2\rightarrow \rad(M)\rightarrow0,$$
and $0\rightarrow N_1\rightarrow P_1\rightarrow \Top(M)\rightarrow0$
with $N_1,N_2$ Gorenstein projective, $P_1,P_2$ projective.
So we get the following commutative diagram with each row and column short exact:
\[\xymatrix{N_2\ar[r]\ar@{.>}[d] &P_2\ar[r]\ar[d] & \rad(M)\ar[d]\\
N\ar@{.>}[r]\ar@{.>}[d] & P_1\oplus P_2\ar[r] \ar[d]& M\ar[d]\\
N_1\ar[r] &P_1\ar[r]& \Top(M).
}\]
Since $\Gproj(A)$ is closed under taking extensions, by the short exact sequence in the first column, we get that $N\in \Gproj(A)$, and then
$\Omega(M)$ is Gorenstein projecitve. So $\Omega(\mod A)\subseteq \Gproj(A)$. On the other hand, it is obvious that $\Gproj(A)\subseteq \Omega(\mod A)$, and then $\Gproj(A)=\Omega(\mod A)$. So Theorem \ref{theorem characterize of gorenstein property} shows that
$A$ is $1$-Gorenstein.
\end{proof}

As a special class of monomial algebras, gentle algebras have some nice properties.

\begin{definition}[\text{\cite[Page 272]{AS}}]
The pair $(Q,I)$ is called gentle if it satisfies the following conditions.
\begin{itemize}
\item Each vertex of $Q$ is starting point of at most two arrows, and end point of at most two arrows.
\item For each arrow $\alpha$ in $Q$ there is at most one arrow $\beta$ such that $\alpha\beta\notin I$, and at most one arrow $\gamma$ such that $\gamma\alpha\notin I$.
\item The set $I$ is generated by zero-relations of length $2$.
\item For each arrow $\alpha$ in $Q$ there is at most one arrow $\beta$ with $t(\beta)=s(\alpha)$ such that $\alpha\beta\in I$, and at most one arrow $\gamma$ with $s(\gamma)=t(\alpha)$ such that $\gamma\alpha\in I$.
\end{itemize}
\end{definition}
A finite-dimensional algebra $A$ is called \emph{gentle}, if it has a presentation as $A=KQ/\langle I\rangle$ where $(Q,I)$ is gentle.
For a gentle algebra $\Lambda=KQ/\langle I\rangle$, we denote by $\cc(\Lambda)$ the set of equivalence classes (with respect to cyclic permutation) of \emph{repetition-free} cyclic paths $\alpha_1\dots\alpha_n$ in $Q$ such that $\alpha_i\alpha_{i+1}\in I$ for all $i$, where we set $n+1=1$.
Then Theorem \ref{proposition characterize of 1 Gorenstein monomial algebras} yields the following corollary.
\begin{corollary}[\text{\cite[Proposition 3.1]{CL}}]
Let $\Lambda=KQ/\langle I\rangle$ be a finite-dimensional gentle algebra. Then $\Lambda$ is $1$-Gorenstein if and only if for any arrows $\alpha,\beta$ in $Q$ with $s(\beta)=t(\alpha)$ and $\beta\alpha\in I$, there exists $c\in \cc(\Lambda)$ such that $\alpha,\beta\in c$.
\end{corollary}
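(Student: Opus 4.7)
The plan is to deduce the corollary directly from Theorem \ref{proposition characterize of 1 Gorenstein monomial algebras} by translating the perfect-path condition into the combinatorics of the gentle quiver.

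Since $\Lambda$ is gentle, the ideal $I$ is generated by paths of length two, so the set $\bF$ of minimal elements of $I$ consists exactly of length-two paths $\beta\alpha$ with $\alpha,\beta\in Q_1$ and $s(\beta)=t(\alpha)$. Therefore the hypothesis of Theorem \ref{proposition characterize of 1 Gorenstein monomial algebras} simplifies to the assertion that every arrow $\beta$ admitting an arrow $\alpha$ with $\beta\alpha\in I$ is a perfect path; in particular $p$ and $q$ in the theorem's statement must both be arrows.

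The main step is to identify perfect arrows with arrows lying on cycles of $\cc(\Lambda)$. First I would verify that in a gentle algebra any length-two relation $\beta\alpha\in I$ automatically produces a perfect pair $(\beta,\alpha)$: conditions (P2) and (P3) of Definition \ref{definition of perfect pair} follow at once from the fourth gentle axiom, which says $\alpha$ is the unique arrow $\gamma$ with $t(\gamma)=s(\beta)$ and $\beta\gamma\in I$, and dually $\beta$ is the unique arrow $\delta$ with $s(\delta)=t(\alpha)$ and $\delta\alpha\in I$. This uniqueness also forces the relation cycle $\beta=\beta_1,\beta_2,\dots,\beta_n,\beta_{n+1}=\beta$ of any perfect arrow $\beta$ to be uniquely determined step by step, and it coincides with a repetition-free cyclic sequence of arrows satisfying $\beta_i\beta_{i+1}\in I$ for all $i$, i.e.\ an element of $\cc(\Lambda)$ containing $\beta$. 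Conversely, any such cycle in $\cc(\Lambda)$ exhibits each of its arrows as a perfect path, since consecutive arrows automatically form perfect pairs by the same uniqueness argument.

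Combining these observations assembles the corollary. For the forward direction, if $\Lambda$ is $1$-Gorenstein and $\beta\alpha\in I$, then by Theorem \ref{proposition characterize of 1 Gorenstein monomial algebras} $\beta$ is a perfect path; its unique relation cycle yields $c\in\cc(\Lambda)$ with $\beta\in c$, and the successor in the cycle is forced by gentle uniqueness to be $\alpha$, placing $\alpha\in c$ as well. Conversely, the combinatorial hypothesis puts every arrow $\beta$ that appears as the left factor of a relation inside some $c\in\cc(\Lambda)$, which by the correspondence makes $\beta$ a perfect path; Theorem \ref{proposition characterize of 1 Gorenstein monomial algebras} then gives that $\Lambda$ is $1$-Gorenstein. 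I expect the only (mild) obstacle to be bookkeeping the composition convention so that the "left factor" $\beta$ of $\beta\alpha\in I$ is matched with the correct successor position in a perfect pair and hence with the direction of traversal of elements of $\cc(\Lambda)$.
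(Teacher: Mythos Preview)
Your proposal is correct and follows the same route as the paper: the paper states only that the corollary follows immediately from Theorem~\ref{proposition characterize of 1 Gorenstein monomial algebras}, and your argument simply spells out the translation --- identifying $\bF$ with the length-two relations, showing via the gentle axioms that $(\beta,\alpha)$ is a perfect pair whenever $\beta\alpha\in I$, and matching relation-cycles of arrows with elements of $\cc(\Lambda)$. No additional ideas are needed.
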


The following result gives a description of the elements in $\bF$ for a $1$-Gorenstein monomial algebra.
\begin{proposition}
\label{proposition elements of ideal for 1-Gorenstein monomial algebras}
Let $A=KQ/I$ be a $1$-Gorenstein monomial algebra. Denote by $\bF$ the set formed by all the minimal paths among the paths in $I$. Assume that
$c=p_{n}\cdots p_2p_1$ is a repetition-free cycle such that $(p_{i+1},p_{i})$ are perfect pairs for all $1\leq i\leq n$. Let $l(p_i)=r_i$ for all $1\leq i\leq n$. Then $r_1+r_2=r_2+r_3=\cdots=r_n+r_1$ and for any subpath $p$ of $c$ with length $r=r_1+r_2$, we have  $p\in\bF$.
%\begin{itemize}
%\item $l(p_i)=r_i$ for all $1\leq i\leq n$;
%\item $p_i=\alpha_{\sum_{i=1}^j r_i}\cdots \alpha_{(\sum_{i=1}^{j-1} r_i)+1}$  for all $1\leq j\leq n$;
%\item $(p_{i+1},p_{i})$ are perfect pairs for all $1\leq i\leq n$.
%\end{itemize}
%Then
%\begin{itemize}
%\item[(i)] $r_1+r_2=r_2+r_3=\cdots=r_n+r_1$;
%\item[(ii)] $\alpha_{j+r-1}\cdots\alpha_{j}\in\bF$ for any $j\geq 1$, where $r=r_1+r_2$. Here we set $\sum_{i=1}^n r_i+1=1$.
%\end{itemize}
\end{proposition}

\begin{proof}
The proof is elementary, and uses only the definitions of perfect pairs and perfect paths.

Obviously, $p_{i+1}p_i\in\bF$ for all $1\leq i\leq n$. Without loss of generality, we assume that $p_2p_1$ is (one of) the longest path in $\{p_{i+1}p_i\mid 1\leq i\leq n\}$. Then $p_2p_1=\alpha_{r_1+r_2}\cdots\alpha_{r_1+1}\alpha_{r_1}\cdots \alpha_1\in\bF$. As $A$ is a $1$-Gorenstein monomial algebra, Theorem \ref{proposition characterize of 1 Gorenstein monomial algebras} shows that $\alpha_{r_1+r_2}\cdots\alpha_{r_1+1}\alpha_{r_1}\cdots \alpha_2$ is a perfect path. So there exists a nonzero path $q_1$ such that $(q_1,\alpha_{r_1+r_2}\cdots\alpha_{r_1+1}\alpha_{r_1}\cdots \alpha_2)$ is a perfect pair. Since $p_3\alpha_{r_1+r_2}\cdots\alpha_{r_1+1}\alpha_{r_1}\cdots \alpha_2=p_3p_2\alpha_{r_1}\cdots \alpha_2\in I$, we have $p_3=q_1'q_1$ for some path $q_1'$ by definition.
So $q_1=\alpha_{r_1+r_2+s_1}\cdots \alpha_{r_1+r_2+1}$ for some $1\leq s_1\leq r_3$.

Similarly, $\alpha_{r_1+r_2}\cdots\alpha_{r_1+1}\alpha_{r_1}\cdots \alpha_3$ is also a perfect path, and then there exists a nonzero path $q_2$ such that $(q_2,\alpha_{r_1+r_2}\cdots\alpha_{r_1+1}\alpha_{r_1}\cdots \alpha_3 )$ is a perfect pair. We have $p_3=q_2'q_2$ with $q_2= \alpha_{r_1+r_2+s_2}\cdots \alpha_{r_1+r_2+1}$ for some $1\leq s_2\leq r_3$. From the fact
$(q_1 ,\alpha_{r_1+r_2}\cdots\alpha_{r_1+1}\alpha_{r_1}\cdots \alpha_2)$ is a perfect pair, we find that $q_1\alpha_{r_1+r_2}\cdots\alpha_{r_1+1}\alpha_{r_1}\cdots \alpha_3\notin I$, and then $1\leq s_1<s_2\leq r_3$.

Inductively,
there exist $1\leq s_1<s_2\cdots<s_{r_1}\leq r_3$ such that
$$(\alpha_{r_1+r_2+s_{r_1}} \cdots \alpha_{r_1+r_2+s_2}\cdots \alpha_{r_1+r_2+s_1}\cdots \alpha_{r_1+r_2+1}, \alpha_{r_1+r_2}\cdots\alpha_{r_1+1})$$
is a perfect pair. Then $\alpha_{r_1+r_2+s_{r_1}} \cdots \alpha_{r_1+r_2+s_2}\cdots \alpha_{r_1+r_2+s_1}\cdots \alpha_{r_1+r_2+1}=\alpha_{r_1+r_2+r_3}\cdots \alpha_{r_1+r_2+1}$, and so $s_{r_1}=r_3$. By our assumption, it is easy to see that $r_1+r_2\geq r_2+r_3$, which implies that $r_1\geq r_3$. Together with $1\leq s_1<s_2\cdots<s_{r_1}=r_3$, one can see that $r_1=r_3$ and $s_i=i$ for $1\leq i\leq r_1$.

Inductively, we have $r_1+r_2=r_2+r_3=\cdots=r_n+r_1$.

From the above, we have also proved $\alpha_{j+r-1}\cdots\alpha_{j}\in\bF$ for any $j\geq 1$, where $r=r_1+r_2$.
\end{proof}

\begin{corollary}
Let $A=KQ/I$ be a Nakayama algebra with $Q$ an oriented cycle. If $A$ is $1$-Gorenstein, then $A$ is self-injective.
\end{corollary}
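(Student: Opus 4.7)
The strategy is to show that, under the hypotheses, every minimal relation in $\bF$ has the same length $r$ and every vertex of $Z_n$ is the source of such a length-$r$ relation. Once this is established, $I=J^r$ (with $J$ the arrow ideal), and the remark preceding this corollary identifies $A\cong kZ_n/J^r$ as a self-injective Nakayama algebra. Write $Z_n$ with vertices $1,\ldots,n$ and arrows $\alpha_i\colon i\to i+1$ cyclically. Since $A$ is finite-dimensional and $I$ is admissible monomial, $\bF\neq\emptyset$ and every element of $\bF$ has length $\geq 2$.

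Fix any $\rho=\beta_r\cdots\beta_1\in\bF$ of length $r$, and set $p:=\beta_r\cdots\beta_2$, a nontrivial nonzero path. Because $p\cdot\beta_1=\rho\in\bF$, Theorem \ref{proposition characterize of 1 Gorenstein monomial algebras} shows that $p$ is a perfect path, so by Definition \ref{definition of perfect path} it sits inside a (unique) cycle of perfect pairs; the first partner of $p$ in that cycle is forced to be the unique element of $R(p)$ by axiom (P2) of Definition \ref{definition of perfect pair}. Since $\beta_1$ is an arrow satisfying $p\beta_1\in I$ and $t(\beta_1)=s(p)$, and since any path of length one is trivially left-minimal, we have $\beta_1\in R(p)$, forcing $R(p)=\{\beta_1\}$. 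In the notation of Proposition \ref{proposition elements of ideal for 1-Gorenstein monomial algebras}, the cycle for $p$ therefore starts with $p_1=\beta_1$ (of length $r_1=1$) and $p_2=p$ (of length $r_2=r-1$), so $r_1+r_2=r$.

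Applying Proposition \ref{proposition elements of ideal for 1-Gorenstein monomial algebras} then yields $\alpha_{j+r-1}\cdots\alpha_j\in\bF$ for every $j$ along the cycle. In the Nakayama setting with $Q=Z_n$, the chain of perfect pairs closes up by wrapping around the quiver an integer number of times, so $j$ exhausts every residue class modulo $n$; hence every vertex of $Z_n$ is the source of some length-$r$ relation in $\bF$. Finally, if some $\rho'\in\bF$ had length $s\neq r$, the same argument applied to $\rho'$ would produce length-$s$ minimal relations at every vertex, and then the longer of any two differing lengths would yield an element of $\bF$ properly containing a shorter element of $\bF$, contradicting minimality of $\bF$. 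Hence $\bF=\{\alpha_{i+r-1}\cdots\alpha_i\mid 1\leq i\leq n\}$, $I=J^r$, and $A$ is self-injective Nakayama. The main delicate step is pinning down $\beta_1$ as the unique partner of $p$ in the relation-cycle; everything else is bookkeeping on Proposition \ref{proposition elements of ideal for 1-Gorenstein monomial algebras}.
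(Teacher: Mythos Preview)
Your proof is correct and follows essentially the same route as the paper's: both invoke Proposition~\ref{proposition elements of ideal for 1-Gorenstein monomial algebras} to force every length-$r$ path of $Z_n$ into $\bF$, and then conclude $I=J^r$. The paper starts from a \emph{longest} element of $\bF$ (so the final uniqueness-of-length step is immediate), whereas you start from an arbitrary element and rule out other lengths afterward; you also spell out explicitly the perfect pair $(p,\beta_1)$ needed to feed into the proposition, a step the paper leaves implicit.
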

\begin{proof}
Assume that $Q=Z_n$ for some $n\geq1$ with $\alpha_i:i\rightarrow i+1$ being the arrow. Recall that $\bF$ is the set formed by all the minimal paths among the paths in $I$. Let $q\in\bF$ be one of the longest path. Denote by $m=l(q)$. Then Proposition \ref{proposition elements of ideal for 1-Gorenstein monomial algebras} shows every path of length $m$ is in $\bF$, and then  $I=J^m$, where $J$ is the two-sided ideal generated by arrows. So $A=KZ_n/J^m$ is self-injective.
\end{proof}

\subsection{Graded singularity categories for $1$-Gorenstein monomial algebras}
In the following, we give a characterization of $D_{sg}(\mod^\Z A)$ for any $1$-Gorenstein monomial algebra $A=KQ/I$.

\begin{lemma}\label{lemma: morphism of indecomposables}
Let $A=KQ/I$ be a $1$-Gorenstein monomial algebra. Let $Ap_i,Ap_j$ be two graded $A$-modules with $\Top(Ap_i)$ and $\Top(Ap_j)$ concentrated in degree zero for some nonzero paths $p_i,p_j$. Then the following hold.
\begin{itemize}
\item[(i)] any nonzero morphism $f\in \Hom_{\mod^\Z A}(Ap_i ,Ap_j )$ is surjective.
\item[(ii)] if $\Hom_{\mod^\Z A}(Ap_i ,Ap_j )\neq0$, then $t(p_i)=t(p_j)$.
\item[(iii)] $\dim_K\Hom_{\mod^\Z A}(Ap_i,Ap_j)\leq 1$.
\end{itemize}
\end{lemma}
\begin{proof}
(i) For any nonzero morphism $f\in \Hom_{\mod^\Z A}(Ap_i ,Ap_j )$, since $p_i$ is a generator of $Ap_i$ as left $A$-module, $f$ is uniquely determined by $f(p_i)$. By the fact $Ap_i$ has a basis given by all nonzero paths $q$ such that $q=q'p_i$ for some path $q'$, we obtain that
$(Ap_i)_0=\Span_K\{p_i\}$, Similarly, $(Ap_j)_0=\Span_K\{p_j\}$. Then $f(p_i)\in f((Ap_i)_0)\subseteq (Ap_j)_0=\Span_K\{p_j\}$, so $f(p_i)=kp_j$ for some $0\neq k\in K$.
As $p_j$ is a generator of $Ap_j$ as left $A$-module, it is easy to see that $f$ is surjective.

(ii) From the proof of (i), if $0\neq f\in \Hom_{\mod^\Z A}(Ap_i ,Ap_j )$, then $f(p_i)=kp_j$ for some $0\neq k\in K$. So
$f(p_i)=f(e_{t(p_i)}p_i)= e_{t(p_i)}f(p_i)=  k e_{t(p_i)}p_j$ which is nonzero. So $e_{t(p_i)}p_j\neq0$, which shows that $t(p_i)=t(p_j)$.

(iii) follows from the proof of (i).
\end{proof}

Now we get the main result in this subsection.

\begin{theorem}\label{singularity category of 1-Gorenstein algebras}
Let $A=KQ/I$ be a $1$-Gorenstein monomial algebra. Then there exists a hereditary algebra $B=KQ^B$ with $Q^B$ a union of finitely many quivers of type $\A$ such that $D_{sg}(\mod^\Z A)\simeq D^b(\mod B^{op})$.
\end{theorem}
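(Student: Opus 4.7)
The plan is to combine Theorem \ref{main theorem} with an almost split triangle argument in the spirit of the proof for Gorenstein Nakayama algebras earlier in Section 4. By Theorem \ref{main theorem} and Proposition \ref{proposition equivalent to hereditary algebra}, the category $\underline{\Gproj}^\Z A \simeq D_{sg}(\mod^\Z A)$ is triangle equivalent to $D^b(\mod H)$ for some hereditary algebra $H$ of finite representation type. Writing $H \cong kQ^H$ for a disjoint union $Q^H$ of Dynkin quivers, it suffices to rule out any connected component of $Q^H$ being of type $\D$ or $\E$; then $B = H^{op}$ has quiver $Q^B$ a disjoint union of type $\A$ quivers (since type $\A$ is preserved under opposite).

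First I would use the $1$-Gorenstein hypothesis together with Proposition \ref{proposition elements of ideal for 1-Gorenstein monomial algebras} to obtain a precise description of $\underline{\Gproj} A$. By Theorem \ref{theorem bijection of perfect path and Gorenstein projective modules}, the indecomposables of $\underline{\Gproj} A$ correspond to perfect paths, and by the exact sequence $(\ref{the syzygy functor of Gorenstein projective modules})$ the syzygy functor cyclically permutes the perfect paths within each relation-cycle $p_1, p_2, \ldots, p_n, p_1$. Proposition \ref{proposition elements of ideal for 1-Gorenstein monomial algebras} shows each such relation-cycle is extremely rigid: the lengths satisfy $r_i + r_{i+1} = r$ for a fixed $r$, and every path of length $r$ in the subquiver supporting the cycle lies in $\bF$. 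In particular, this forces $\underline{\Gproj} A$ to decompose, according to relation-cycles, into blocks where each block is equivalent (as a triangulated category with $\Omega$-action) to the stable category of a self-injective Nakayama algebra $kZ_n/J^r$ of type $\A$. Hence each block is a triangulated orbit category of the form $D^b(\mod H_\alpha)/\Sigma^{m_\alpha}$ with $H_\alpha$ hereditary of type $\A$, exactly as in the Nakayama situation treated in the preceding corollary.

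Second, I would transfer this structural information from $\underline{\Gproj} A$ to $\underline{\Gproj}^\Z A$ via the forgetful functor $F: \Gproj^\Z A \to \Gproj A$ of Lemma \ref{lemma forgetful functor}, which is dense by Lemma \ref{lemma structure of Gorenstein projective module}. Suppose for a contradiction that some connected component of $\underline{\Gproj}^\Z A$ were of type $\D$ or $\E$. Then that component would contain an almost split triangle
\[
L \longrightarrow M_1 \oplus M_2 \oplus M_3 \longrightarrow N \longrightarrow \Sigma L
\]
with three nonzero indecomposable middle terms. Lifting to an almost split sequence $L \to M_1 \oplus M_2 \oplus M_3 \oplus P \to N$ in $\Gproj^\Z A$ (where $P$ is projective), applying the exact, dense functor $F$ yields an almost split sequence in $\Gproj A$, and hence an almost split triangle in $\underline{\Gproj} A$ with three middle indecomposable summands. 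This contradicts the description obtained in the previous step, since in the stable category of a self-injective Nakayama algebra every almost split triangle has at most two middle indecomposable summands. Therefore every connected component of $Q^H$ is of type $\A$, and setting $B = H^{op} = kQ^B$ completes the proof.

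The main obstacle will be the first step, namely establishing rigorously that for a $1$-Gorenstein monomial algebra $A$ each relation-cycle of perfect paths produces a block of $\underline{\Gproj} A$ equivalent to the stable category of a self-injective Nakayama algebra. This requires verifying: (a) perfect paths in different relation-cycles generate Gorenstein projective modules with vanishing stable $\Hom$ (so the blocks are genuinely disjoint); and (b) within a single cycle the morphism structure and the $\Omega$-action match the Nakayama model, which should follow from the uniform length constraint $r_i + r_{i+1} = r$ in Proposition \ref{proposition elements of ideal for 1-Gorenstein monomial algebras}. If a direct structural identification is awkward, a cleaner alternative is to argue (b) indirectly by a pigeonhole argument on the orbit of $\Omega$ together with Lemma \ref{lemma CM-finite}, together with an AR-theoretic computation ruling out three-fold branching in $\underline{\Gproj} A$.
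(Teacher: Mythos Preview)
Your approach is genuinely different from the paper's and, as written, incomplete at exactly the point you flag as the main obstacle.

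The paper does \emph{not} argue indirectly via almost split triangles. Instead it works directly with the tilting object from Theorem~\ref{main theorem}: since $A$ is $1$-Gorenstein, $\Omega(T)$ is already graded Gorenstein projective, so $\Omega(T)$ is a tilting object whose indecomposable summands are of the form $Ap(-1)$ for perfect paths $p$. The paper then computes $B=\End_{\underline{\Gproj}^\Z A}(\Omega(T)(1))$ explicitly. The key observations are elementary: any nonzero degree-zero map $Ap_i\to Ap_j$ forces $t(p_i)=t(p_j)$ and is automatically an epimorphism (hence $\dim\Hom\le 1$ and the quiver $Q^B$ is acyclic); such a map is nonzero in the stable category; and a partial order on the $Ap_i$ via ``$p_j=p_ip_{i,j}$'' shows the irreducible maps form disjoint linear chains with no relations. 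This gives $B=kQ^B$ with $Q^B$ a disjoint union of $\A$-type quivers directly.

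Your route instead reduces everything to the claim that $\underline{\Gproj}A$ decomposes into blocks, each triangle equivalent to the stable category of a self-injective Nakayama algebra. That claim is correct, but it is exactly Theorem~\ref{singularity category of 1-gorenstein monomial algebras}, which the paper proves only in Section~6 using the machinery of Br\"ustle's gluing algebras and the singularity-equivalence result Theorem~\ref{theorem singularity equivalence of gluing vertices}. Your point~(a) is Lemma~\ref{lemma direct sum}, and your point~(b) is essentially Lemma~\ref{lemma singularity category of 1-Gorenstein 2}; neither is immediate from Proposition~\ref{proposition elements of ideal for 1-Gorenstein monomial algebras} alone. So your proposal trades a short direct computation for an appeal to a harder structural theorem that you have not established. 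If you are willing to assume Theorem~\ref{singularity category of 1-gorenstein monomial algebras}, your AR-triangle argument does go through, but then you have inverted the logical order of the paper and done strictly more work overall. The paper's direct computation of the endomorphism algebra is both shorter and self-contained.
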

\begin{proof}
Similar to proof of Theorem \ref{main theorem}, we take a positive integer $l$ such that $A=A_{\leq l}$ and define a $\Z$-graded $A$-module by
$T:=\bigoplus_{0\leq i\leq l} A(i)_{\leq0}.$
Then $T$ is a tilting object in $D_{sg}( \mod^\Z A)$.
Let $P_T\rightarrow T$ be the minimal graded projective cover. Then $0\rightarrow\Omega(T)\rightarrow P_T\rightarrow T\rightarrow0$ is a short exact sequence. Note that $\Omega(T)$ is a (graded) Gorenstein projective module since $A$ is $1$-Gorenstein. So $\Omega(T)$ is a tilting object in $\underline{\Gproj}^\Z A$. Let $B=\End_{\underline{\Gproj}^\Z A}(\Omega(T)(1))$. It is enough to prove that $B$ is a hereditary algebra of type $\A$ by Buchweitz's Theorem.

By definition, $\Top(\Omega(T))$ is homogeneous of degree 1. So every indecomposable summand of $\Omega(T)$ is of the form $Ap(-1)$ for some perfect path $p$. Here $Ap$ is viewed as a graded module such that $\Top(Ap)$ is concentrated in degree zero.

Without loss of generality, we assume that $\Omega(T)$ is basic, and then
$\Omega(T)\cong \oplus_{i=1}^n Ap_i(-1)$ for some pairwise different perfect paths $p_1,\dots,p_n$. Easily, $\Omega(T)(1)$ is also a tilting object of $\underline{\Gproj}^\Z A$. We can assume that $B=KQ^B/I^B$, where $(Q^B,I^B)$ is a bound quiver. We shall prove that $Q^B$ is of type $\A$ and $I^B=0$, by computing the irreducible morphisms in $\add\Omega(T)(1)$ in the following.

In fact, the vertex set of $Q^B$ is $\{Ap_1,\dots,Ap_n\}$.
By Lemma \ref{lemma: morphism of indecomposables}, we have the following:
\begin{itemize}
\item[(a)] there is no irreducible morphisms in $\End_{\mod^\Z A}(Ap_i)$, and then $Q_B$ has no loops;
\item[(b)] there exists at most one arrow between $Ap_i$ and $Ap_j$ for any $i\neq j$;
\item[(c)] $Q_B$ is acyclic.
\end{itemize}

For any nonzero morphism $f\in \Hom_{\mod^\Z A}(Ap_i ,Ap_j )$, if $f$ is zero in $\underline{\Gproj}^\Z A$, then $f$ factors through the projective cover $P_{t(p_j)} \xrightarrow{\alpha} Ap_j $ in $\mod^\Z A$, which is of the form $f=\beta\alpha$, where $\beta:Ap_i \rightarrow P_{t(p_j)} $. Because $t(p_j)=t(p_i)$ by Lemma \ref{lemma: morphism of indecomposables} (ii), there is a nonzero morphism $\beta:Ap_i \rightarrow P_{t(p_i)} $. Lemma \ref{lemma: morphism of indecomposables} (i) shows that $\beta$ is an isomorphism, and then $Ap_i$ is projective, giving a contradiction. So $f\neq0$ in $\underline{\Gproj}^\Z A$. Therefore, the set of arrows in $Q^B$ from $Ap_i$ to $Ap_j$ is nonempty if and only if there exists an irreducible morphism from $Ap_i$ to $Ap_j$ in the subcategory $\add \Omega(T)(1)\subseteq \Gproj^\Z A$. In this case, the arrow set of $Q^B$ from $Ap_i$ to $Ap_j$ is formed by an irreducible morphism from $Ap_i$ to $Ap_j$.

We define a partial order for the set $\{Ap_1,\dots, Ap_n\}$ as follows. We set $Ap_i\leq Ap_j$ if and only if $p_i,p_j$ have the same ending point, and $p_j=p_i p_{i,j}$ for some path $p_{i,j}$. One can check that it is well defined.
Obviously, we can assume that the partial order is $Ap_1\leq \cdots  \leq Ap_{m_1}$, $Ap_{m_1+1}\leq \cdots \leq Ap_{m_2}$, $\dots$, $Ap_{m_{r-1}+1}\leq \cdots\leq Ap_{m_r}$.
In the following, we prove that there is an irreducible morphism from $Ap_i$ to $Ap_j$ in $\add \Omega(T)$ if and only if $j=i+1$ and $Ap_i\leq Ap_{i+1}$.

Denote by $(q_i,p_i)$ the perfect pair for any $1\leq i\leq n$.
If there is a nonzero morphism $f$ from $Ap_i$ to $Ap_j$, then $t(p_i)=t(p_j)$ and  $f$ is surjective by Lemma \ref{lemma: morphism of indecomposables}.
We claim that $q_ip_j=0$ in $A$. Otherwise, $0\neq q_ip_j\in Ap_j=\Im f$. Without loss of generality, we can assume that $f(p_i)=p_j$ by the proof of Lemma  \ref{lemma: morphism of indecomposables} (i). Then there exists $(\sum_k a_k t_k)p_i\in A$ with $a_k\in K$, and $t_k$ pairwise different nonzero paths in $A$ such that $f((\sum_k a_k t_k)p_i) =q_ip_j$. We can assume that $a_k\neq0 $, and $t_kp_i\neq 0$ for each $k$. Then $q_ip_j=f((\sum_k a_k t_k)p_i) =(\sum_k a_k t_k)f(p_i)=\sum_k a_k t_kp_j$ in $A$. As all nonzero paths form a basis of $A$, there exists some $t_{k}$ such that $t_kp_j=q_ip_j$. Then $t_k=q_i$, giving a contradiction to $t_kp_i\neq0$.
The claim holds. It follows from Definition \ref{definition of perfect pair} that $p_j=p_i p_{i,j}$ for some path $p_{i,j}$, and then $Ap_i\leq Ap_j$.

Conversely, if $Ap_i\leq Ap_j$, then $p_j=p_i p_{i,j}$, and there is a nonzero morphism $f_{ij}:Ap_i\rightarrow Ap_j$ induced by the multiplication of $p_{i,j}$ for any $i,j$. In particular, Lemma \ref{lemma: morphism of indecomposables} (iii) implies that $f_{ij}=f_{j-1,j}\cdots f_{i+1,i+2}f_{i,i+1}$ (up to a scalar), and $\Hom_{\underline{\Gproj}^\Z A}(Ap_i,Ap_j)$ is spanned by $f_{ij}$ if $Ap_i\leq Ap_j$.
So if there is an irreducible morphism $f:Ap_i\rightarrow Ap_j$ in $\add \Omega(T)$, then $j=i+1$.
In fact, one can check that $f_{i,i+1}:Ap_i\rightarrow Ap_{i+1}$ is irreducible in $\add \Omega(T)$ when $Ap_i\leq Ap_{i+1}$ by noting that any morphism from $Ap_i$ to $Ap_j$ with $j\neq i$ factors through $f_{i,i+1}$.
Then
$$Q^B=\bigcup_{l=1}^r (Ap_{m_{l-1}+1}\rightarrow Ap_{m_{l-1}+2}\rightarrow\cdots\rightarrow Ap_{m_l} ).$$
Here $m_0=0$. Because each irreducible morphism $f_{i,i+1}: Ap_i\rightarrow Ap_{i+1}$ is surjective, $I^B$ must be $0$.

Therefore, there exists a hereditary algebra $B=KQ^B$ with $Q^B$ a union of finitely many quivers of type $\A$ such that $D_{sg}(\mod^\Z A)\simeq\underline{\Gproj}^\Z A\simeq D^b(\mod B^{op})$.
\end{proof}

\section{Singularity categories of $1$-Gorenstein monomial algebras}\label{section 6}

In this section, our aim is to characterize singularity categories for $1$-Gorenstein monomial algebras. Before that, we give a precise definition of the \emph{gluing of algebras}, which is defined in \cite[Subsection 2.1]{Br}. After that, we prove that singularity category is an invariance under taking this kind of gluing. Finally, we use it to characterize the singularity categories of $1$-Gorenstein monomial algebras.

\subsection{Gluing of algebras}

Let $\Lambda=KQ/I$ be a finite-dimensional quiver algebra (not necessarily monomial), where $Q=(Q_0,Q_1,s,t)$. We do not assume $Q$ to be connected. Let $E$ be an \emph{involution} on the set of vertices of $Q$. The following precedure associates a new quiver $Q(E)$ to $Q$ by gluing together each pair $x\neq E(x)$ to one vertex. Precisely, for each $x\in Q_0$, we define $\bar{x}=\{x,E(x)\}$. The quiver $Q(E)=(Q(E)_0,Q(E)_1,s(E),t(E))$ is then defined as follows:
\begin{itemize}
\item $Q(E)_0=\{\bar{x}:x\in Q_0\}$,
\item $Q(E)_1=Q_1$,
\item $s(E)(\alpha)=\overline{s(\alpha)}$ and $t(E)(\alpha)=\overline{t(\alpha)}$ for any $\alpha\in Q_1$.
\end{itemize}
From the definition it follows that any path in $Q$ is also a path in $Q(E)$, hence we can regard $I$ as a subset of $KQ(E)$. Let $I(E)$ be the ideal of $KQ(E)$ that is generated by $I$. Set $\Lambda_E:=KQ(E)/I(E)$, which is called the \emph{Br\"{u}stle's gluing algebra of $\Lambda$ by gluing the vertices along $E$}.

If $\Lambda_E$ is finite-dimensional, then there is no nonzero path from $x$ to $E(x)$ for any $x\neq E(x)$ in $\Lambda $. In fact, we have the following lemma.

\begin{lemma}\label{lemma finite dimension of gluing algebras}
Keep the notations as above. Then $\Lambda_E$ is finite-dimensional if and only if there is no nonzero paths $p_m,\dots,p_1$ in $\Lambda $ such that
$t(p_i)=E(s(p_{i+1}))$ and $t(p_i)\neq s(p_{i+1}) $ for any $i\in\Z/m\Z$.
\end{lemma}
\begin{proof}
The following proof is elementary, and uses only the definition of Br\"{u}stle's gluing algebras.

Suppose for a contradiction that there exist nonzero paths $p_m,\dots,p_1$ in $\Lambda $ such that
$t(p_i)=E(s(p_{i+1}))$ and $t(p_i)\neq E(t(p_i))$ for any $i\in\Z/m\Z$. Then $(p_m\cdots p_1)^l$ is a path in $Q(E)$ for any $l>0$. From the definition of $I(E)$, it is easy to see that $(p_m\cdots p_1)^l$ is nonzero in $\Lambda_E$, which implies that $\Lambda_E$ is infinite-dimensional, giving a contradiction.

Conversely, we assume that $(x_1,E(x_1)),\dots (x_n,E(x_n))$ are the pairs of vertices with $x_i\neq E(x_i)$.
For $1\leq j\leq n$, define the involution $E_j$ on the set of vertices of $Q$ such that $E_j(x_i)=E(x_i)$ for any $1\leq i\leq j$, and $E(x)=x$ otherwise. It is easy to see that $E_n=E$. Then we get a series of algebras $\Lambda ,\Lambda _{E_1},\dots,\Lambda _{E_n}=\Lambda_E$. We prove that all of these algebras are finite-dimensional recursively.

Denote by $N_0$ the length of the longest path in $Q$ which is nonzero in $\Lambda $. Let $q$ be a nonzero path in $\Lambda _{E_1}$. If $q$ is path in $\Lambda $, then it is also nonzero in $\Lambda $, and so the length of $q$ is less than $N_0$.
Otherwise, $q$ is of the form $q=q_r\cdots q_1$, where $\{t(q_i),s(q_{i+1})\}=\{x_1,E(x_1)\}$ for any $1\leq i<r$, and $q_1,\dots ,q_r$ are nonzero paths in $\Lambda $.
We claim that if $q$ is nonzero in $\Lambda _{E_1}$, then $r\leq 3$.

Suppose for a contradiction that $r\geq4$.
We assume that $t(p_1)=x_1$. Then $s(p_2)=E(x_1)$.  As $\Lambda _{E_1}$ is finite-dimensional and $p_2$ is nonzero in $\Lambda $, we have $t(p_2)\neq E(s(p_2))$, so $t(p_2)=E(x_1)$ by noting that $\{t(p_2),s(p_3)\}=\{x_1,E(x_1)\}$.
It follows that $s(p_3)=x_1$. Similarly, $t(p_3)=x_1=E(s(p_2))$. Then $p_3,p_2$ are two nonzero paths in $\Lambda $ such that $E(t(p_2))=s(p_3)$ and $t(p_3)=E(s(p_2))$, giving a contradiction to our assumption. Therefore, the length of $q$ is less than $3N_0$, and then $\Lambda _{E_1}$ is finite-dimensional. As $q$ is arbitrary, we get that $\Lambda _{E_1}$ is finite-dimensional.

We can view $\Lambda _{E_2}$ as a Br\"{u}stle's gluing algebra of $\Lambda _{E_1}$ by identifying $x_2$ and $E(x_2)$. In order to prove that $\Lambda _{E_2}$ is finite-dimensional, we only need to check that $\Lambda _{E_2}$ satisfies the condition: there is no nonzero paths $p_m,\dots,p_1$ in $\Lambda _{E_1}$ such that
$\{t(p_i), s(p_{i+1})\}=\{x_2,E(x_2)\}$ for any $i\in\Z/m\Z$.
Otherwise, $p_i$ is of the form $p_i=p_{ir_i}\cdots p_{i1}$, where $\{t(p_{ij}),s(p_{i,j+1})\}=\{x_1,E(x_1)\}$ for any $1\leq j<r_i$, and $p_{ir_i},\dots ,p_{i1}$ are nonzero paths in $\Lambda $.
So we get a series of nonzero paths in $\Lambda $:  $p_{mr_m},\dots,p_{m1}$, $\dots$, $p_{1r_1},\dots, p_{11}$, which gives a contradiction to our assumption on $\Lambda $.
\end{proof}

From the proof of Lemma \ref{lemma finite dimension of gluing algebras}, in order to prove properties for  Br\"{u}stle's gluing algebras, it usually reduces to prove the case that there is only one pair of vertices $(x,E(x))$ such that $x\neq E(x)$. This is one of the main technique used below.

In the following, we always assume that $\Lambda_E$ is finite-dimensional.

For $\Lambda =KQ/I$, and $E$ is an involution on the set of vertices of $Q$, we define another new quiver $\bar{Q}$ from $Q$ by adding an arrow $\alpha_{(x,E(x))}$ between $x$ and $E(x)$ (in either direction) if $x\neq E(x)$. Then $I$ can be viewed as a subset of $K\bar{Q}$. Let $\bar{I}$ be the ideal of $K\bar{Q}$ generated by $I$ and set $\bar{\Lambda }=K\bar{Q}/\bar{I}$. It is easy to see that if $\Lambda_E$ is finite-dimensional, then so is $\bar{\Lambda }$.

\begin{example}\label{example cover functor}
Let $\Lambda =KQ/I$ be the quiver algbra, where $Q$ is the quiver as Fig. 7. shows, and $I$ is generated by $\alpha_{i+2}\alpha_{i+1}\alpha_i$, for all $i\in\Z/6\Z$. Let
$E$ be the involution such that $E(3)=6$, and $E(i)=i$ otherwise. Then $\Lambda_E$ is the algebra $KQ(E)/I_E$ with $Q(E)$ as Fig. 8. shows,
and $I_E$ is generated by $\alpha_{i+2}\alpha_{i+1}\alpha_i$, for all $i\in\Z/6\Z$. $\bar{\Lambda }=K\bar{Q}/\bar{I}$ is the algebra with $\bar{Q}$ as Fig. 9. shows, and $\bar{I}$ is also
generated by $\alpha_{i+2}\alpha_{i+1}\alpha_i$, for all $i\in\Z/6\Z$.

\begin{center}\setlength{\unitlength}{0.6mm}
 \begin{picture}(180,40)
\put(70,0){\begin{picture}(60,60)
\put(0,0){\circle*{1.5}}
\put(0,20){\circle*{1.5}}
\put(20,10){\circle*{1.5}}
\put(40,0){\circle*{1.5}}
\put(40,20){\circle*{1.5}}

\put(0,2){\vector(0,1){16}}
\put(2,19){\vector(2,-1){16}}
\put(22,11){\vector(2,1){16}}
\put(40,18){\vector(0,-1){16}}
\put(38,1){\vector(-2,1){16}}
\put(18,9){\vector(-2,-1){16}}

\put(-2,-4){$_{\bar{1}}$}
\put(-2,23){$_{\bar{2}}$}
\put(18.5,6){$_{\bar{3}}$}
\put(39,23){$_{\bar{4}}$}
\put(39,-4){$_{\bar{5}}$}
\put(-6,6){$^{\alpha_1}$}
\put(9,14){$^{\alpha_2}$}
\put(25,14){$^{\alpha_3}$}
\put(41,6){$^{\alpha_4}$}
\put(25,0){$^{\alpha_5}$}
\put(9,0){$^{\alpha_6}$}

\put(-10,-20){Fig. 8. The quiver $Q(E)$}
\put(15,-28){in Example \ref{example cover functor}.}

\end{picture}}
\put(-15,0){\begin{picture}(60,60)
\put(0,0){\circle*{1.5}}
\put(0,20){\circle*{1.5}}
\put(20,0){\circle*{1.5}}
\put(20,20){\circle*{1.5}}
\put(40,20){\circle*{1.5}}
\put(40,0){\circle*{1.5}}

\put(0,2){\vector(0,1){16}}
\put(2,20){\vector(1,0){16}}
\put(22,20){\vector(1,0){16}}

\put(40,18){\vector(0,-1){16}}
\put(38,0){\vector(-1,0){16}}
\put(18,0){\vector(-1,0){16}}

\put(-2,-4){$_1$}
\put(-2,23){$_2$}
\put(19,-4){$_6$}
\put(19,23){$_3$}
\put(39,23){$_4$}
\put(39,-4){$_5$}
\put(-6,6){$^{\alpha_1}$}
\put(8,14){$^{\alpha_2}$}
\put(26,14){$^{\alpha_3}$}
\put(41,6){$^{\alpha_4}$}
\put(26,0){$^{\alpha_5}$}
\put(8,0){$^{\alpha_6}$}
\put(-10,-20){Fig. 7. The quiver $Q$}
\put(12,-28){in Example \ref{example cover functor}.}
\end{picture}}

\put(150,0){\begin{picture}(60,60)
\put(0,0){\circle*{1.5}}
\put(0,20){\circle*{1.5}}
\put(20,0){\circle*{1.5}}
\put(20,20){\circle*{1.5}}
\put(40,20){\circle*{1.5}}
\put(40,0){\circle*{1.5}}

\put(0,2){\vector(0,1){16}}
\put(2,20){\vector(1,0){16}}
\put(22,20){\vector(1,0){16}}

\put(40,18){\vector(0,-1){16}}
\put(38,0){\vector(-1,0){16}}
\put(18,0){\vector(-1,0){16}}

\put(20,18){\vector(0,-1){16}}
\put(21,7){$^{\alpha_{(3,6)}}$}
\put(-2,-4){$_1$}
\put(-2,23){$_2$}
\put(19,-4){$_6$}
\put(19,23){$_3$}
\put(39,23){$_4$}
\put(39,-4){$_5$}
\put(-6,6){$^{\alpha_1}$}
\put(8,14){$^{\alpha_2}$}
\put(26,14){$^{\alpha_3}$}
\put(41,6){$^{\alpha_4}$}
\put(26,0){$^{\alpha_5}$}
\put(8,0){$^{\alpha_6}$}
\put(-10,-20){Fig. 9. The quiver $\bar{Q}$}
\put(15,-28){in Example \ref{example cover functor}.}
\end{picture}}

\end{picture}
\vspace{1.8cm}
\end{center}

\end{example}

\subsection{Singularity categories of Br\"{u}stle's gluing algebras}

In this subsection, we prove that $D_{sg}(\bar{\Lambda })$, $D_{sg}(\Lambda_E)$ and $D_{sg}(\Lambda )$ are triangulated equivalent. For any two algebras $A$ and $B$, if $D_{sg}(A)$ and $D_{sg}(B)$ are triangulated equivalent, then we call $A$ and $B$ to be \emph{singularity equivalent}. %, similar to the definition of \emph{derived equivalent}.

Keep the notations as above.
W define a functor $\Phi: \mod \Lambda_E\rightarrow \mod \bar{\Lambda }$ as follows.
For any representation $M=(M_{\bar{i}},\phi_\alpha)_{\bar{i}\in Q(E)_0,\alpha\in Q(E)_1}$ of $(Q(E),I(E))$,
we define $\Phi(M)=(N_j,\psi_\beta)_{j\in \bar{Q}_0,\beta\in \bar{Q}_1}$ by $N_j=M_{\bar{i}}$ if $j\in \bar{i}=\{i,E(i)\}$, and $\psi_\beta=\phi_\beta$ for any $\beta\in Q(E)_1= Q_1$, and $\psi_\beta=\Id$ for any $\beta=\alpha_{x,E(x)}$. It is easy to see that $\Phi$ is an exact fully faithful functor, which induces that $\mod \Lambda_E$ is equivalent to the full subcategory of $\mod \bar{\Lambda }$ consisting of those modules $(N_j,\psi_\beta)_{j\in \bar{Q}_0,\beta\in \bar{Q}_1}$
with $\psi_\beta$ bijective whenever $\beta$ is any  added arrow $\alpha_{(x,E(x))}$.

Since $\Phi:\mod \Lambda_E\rightarrow \mod \bar{\Lambda }$ is exact, its derived functor is $D^b(\Phi): D^b(\mod \Lambda_E)\rightarrow D^b(\mod \bar{\Lambda })$. We shall prove that $D^b(\Phi)$ is fully faithful (not dense) in the following, which needs some lemmas.

\begin{lemma}\label{lemma exact sequence for Phi}
Keep the notations as above. Assume that there is only one pair of vertices $(x,E(x))$ such that $x\neq E(x)$, and the added arrow for $\bar{Q}$ is $\gamma=\alpha_{(x,E(x))}:x\rightarrow E(x)$. Then for any vertex $i\neq E(x)$ in $Q_0$, there are short exact sequences in $\mod \bar{\Lambda }$:
\begin{equation}\label{equation finite projective dimension of Phi 1}
0\rightarrow (\bar{\Lambda }(e_{E(x)}))^{\oplus t_i} \rightarrow \bar{\Lambda } e_i\oplus (\bar{\Lambda }e_x)^{\oplus t_i}\rightarrow \Phi(\Lambda_E e_{\bar{i}})\rightarrow0,
\end{equation}
and
\begin{equation}\label{equation finite projective dimension of Phi 2}
0\rightarrow \bar{\Lambda } e_{i}\rightarrow \Phi(\Lambda_E e_{\bar{i}})\rightarrow B_x^{\oplus t_i}\rightarrow0,
\end{equation}
for some $t_i$, where $B_x$ is the cokernel of the natural injective morphism $f_\gamma: \bar{\Lambda } e_{E(x)}\rightarrow  \bar{\Lambda } e_{x}$ induced by the right multiplication of the arrow $\gamma$.
\end{lemma}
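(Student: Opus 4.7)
The plan is to first establish the second short exact sequence~(\ref{equation finite projective dimension of Phi 2}) and then deduce the first~(\ref{equation finite projective dimension of Phi 1}) from it via a pullback construction. For the injection $\iota_i\colon \bar S e_i \to \Phi(S_E e_{\bar i})$ I would use the universal property of the projective $\bar S e_i$, sending its generator $e_i$ to the distinguished element $e_{\bar i}\in (S_E e_{\bar i})_{\bar i} = \Phi(S_E e_{\bar i})_i$. Unwinding the definition of $\Phi$, the $j$-th component $(\iota_i)_j$ sends a nonzero $\bar Q$-path $p = \beta_m\cdots\beta_1\colon i\to j$ to $\psi_{\beta_m}\cdots\psi_{\beta_1}(e_{\bar i})$, which, since $\psi_\gamma$ is the identity and $\psi_\beta = \phi_\beta$ for $\beta\in Q_1$, equals the product in $S_E$ of the non-$\gamma$ arrows of $p$ acting on $e_{\bar i}$. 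For injectivity I would argue vertex-wise: the hypothesis $i\neq E(x)$ implies that in any nonzero $\bar Q$-path starting at $i$ the placement of $\gamma$'s is uniquely determined by the underlying non-$\gamma$ arrow sequence (each $\gamma$ must be inserted at exactly the gaps where one $Q_1$-arrow ends at $x$ and the next begins at $E(x)$, with no other possibilities), so distinct basis elements of $(\bar S e_i)_j$ yield distinct nonzero elements of $(S_E e_{\bar i})_{\bar j}$ at every vertex.

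Next, set $t_i := \dim_k (\bar S e_i)_{E(x)} - \dim_k (\bar S e_i)_x$, which counts the nonzero $\bar Q$-paths from $i$ to $E(x)$ whose last arrow lies in $Q_1$, and fix representatives $q_1,\ldots,q_{t_i}$. To identify the cokernel $C := \Phi(S_E e_{\bar i})/\iota_i(\bar S e_i)$ with $B_x^{\oplus t_i}$, I would use the adjunction $\Hom_{\bar S}(B_x, N) \cong \{n\in N_x : \gamma\cdot n = 0\}$. Under the $\Phi$-identification $\Phi(S_E e_{\bar i})_x = \Phi(S_E e_{\bar i})_{E(x)} = (S_E e_{\bar i})_{\bar x}$, each $\iota_i(q_l)\in \Phi(S_E e_{\bar i})_{E(x)}$ corresponds to an element $\bar q_l\in\Phi(S_E e_{\bar i})_x$, whose class $[\bar q_l]\in C_x$ is annihilated by $\gamma$ because $\gamma\cdot\bar q_l = \iota_i(q_l)$ already lies in the image of $\iota_i$. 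By the adjunction, the elements $[\bar q_l]$ determine a $\bar S$-module morphism $B_x^{\oplus t_i}\to C$, which I would argue is an isomorphism vertex-by-vertex: every basis element of $C_j$ corresponds to a $Q(E)$-path from $\bar i$ to $\bar j$ involving at least one $E(x)\to x$-type composition (or, for $j\in\{x, E(x)\}$, an end at the ``wrong'' side of $\bar x$), and taking the last $E(x)\to x$ gluing decomposes such a path uniquely as some $q_l$ (indexing the summand) followed by an element of $(B_x)_j$ (the suffix).

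Finally, for sequence~(\ref{equation finite projective dimension of Phi 1}) I would invoke a pullback construction. Taking $t_i$ copies of the defining short exact sequence $0\to \bar S e_{E(x)}\xrightarrow{f_\gamma}\bar S e_x\to B_x\to 0$ together with the surjection $\Phi(S_E e_{\bar i})\twoheadrightarrow B_x^{\oplus t_i}$ just constructed, the fibre product $M := \bar S e_x^{\oplus t_i}\times_{B_x^{\oplus t_i}}\Phi(S_E e_{\bar i})$ fits into two short exact sequences. The first, $0\to \bar S e_i\to M\to \bar S e_x^{\oplus t_i}\to 0$, arises from pulling back the kernel of $\Phi(S_E e_{\bar i})\twoheadrightarrow B_x^{\oplus t_i}$ and splits by projectivity of $\bar S e_x^{\oplus t_i}$, yielding $M\cong \bar S e_i\oplus \bar S e_x^{\oplus t_i}$. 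The second, $0\to \bar S e_{E(x)}^{\oplus t_i}\to M\to \Phi(S_E e_{\bar i})\to 0$, arises from pulling back the kernel of $\bar S e_x^{\oplus t_i}\twoheadrightarrow B_x^{\oplus t_i}$, and substituting the previous splitting gives~(\ref{equation finite projective dimension of Phi 1}). The main obstacle is the cokernel identification in the second paragraph: while at vertices $j\neq x, E(x)$ the decomposition into prefix and suffix follows a clean bijection, at $j\in\{x, E(x)\}$ the $\Phi$-identification of the $k$-spaces at vertices $x$ and $E(x)$ requires delicate bookkeeping to confirm that the proposed morphism $B_x^{\oplus t_i}\to C$ remains bijective there.
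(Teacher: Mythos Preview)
Your proposal is correct and follows essentially the same route as the paper: establish sequence~(\ref{equation finite projective dimension of Phi 2}) first by exhibiting $\bar S e_i$ as a submodule of $\Phi(S_E e_{\bar i})$ with cokernel $B_x^{\oplus t_i}$, and then deduce sequence~(\ref{equation finite projective dimension of Phi 1}) via the pullback of $\Phi(S_E e_{\bar i})\twoheadrightarrow B_x^{\oplus t_i}$ along $(\bar S e_x)^{\oplus t_i}\twoheadrightarrow B_x^{\oplus t_i}$, splitting the resulting extension by projectivity of $(\bar S e_x)^{\oplus t_i}$. The paper handles the first step pictorially (simplifying to one arrow in and out of each of $x$ and $E(x)$ and drawing the tree structure of $\Phi(S_E e_{\bar i})$), whereas you spell out the same content via the universal property of $\bar S e_i$, the $\gamma$-placement argument, and the adjunction $\Hom_{\bar S}(B_x,-)\cong\{n : \gamma\cdot n=0\}$; the ``obstacle'' you flag at the vertices $x,E(x)$ is exactly what the paper's figures are encoding.
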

\begin{proof}
Since $\Lambda_E$ is finite-dimensional, from Lemma \ref{lemma finite dimension of gluing algebras}, there is no nonzero path from $x$ to $E(x)$ or from $E(x)$ to $x$ in $\Lambda $.
In the following, we use the structure of projective modules and the definition of $\Phi$ to prove our desired results.

In order to describe the structure of projective modules, for simplicity, we assume that there is only one arrow $\alpha_1$ ending at $x$, only one arrow $\alpha_2$ starting at $x$, only one arrow $\alpha_3$ ending at $E(x)$, and only one arrow $\alpha_4$ starting at $E(x)$. Then the structure of the indecomposable projective module $\Lambda _Ee_{\bar{i}}$ is as Fig. 10. shows.
The structure of $\Phi(\Lambda _Ee_{\bar{i}})$ is as Fig. 11. shows. So  there exists a short exact sequence in $\mod \bar{\Lambda }$:
$$0\rightarrow \bar{\Lambda } e_{i}\rightarrow \Phi(\Lambda_E e_{\bar{i}})\rightarrow B_x^{\oplus t_i}\rightarrow0.$$
In fact, $t_i$ is the number of the arrows $\alpha_3$ in Fig. 10..

It is worth noting that $\Phi(\Lambda _Ee_{\bar{x}})=\bar{\Lambda }e_x$ by Fig. 11. since there is no nonzero path from $x$ to $E(x)$ in $\Lambda $.

\setlength{\unitlength}{0.8mm}
\begin{center}
\begin{picture}(100,65)(0,-20)
\put(-10,0){\begin{picture}(50,50)
\put(9,30){\small$\bar{i}$}
\put(9,29){\vector(-1,-1){6}}
\put(11,29){\vector(1,-1){6}}
\put(10,29){\vector(0,-1){6}}
\put(4,22){\small $\cdots$}
\put(11,22){\small $\cdots$}
\put(17,19){\small $\ddots$}
\put(9.5,18){\small $\vdots$}
\put(1.5,21){\small $\cdot$}
\put(0,19.5){\small $\cdot$}
\put(-1.5,18){\small $\cdot$}

\put(-2,18){\vector(-1,-1){6}}
\put(10,17){\vector(0,-1){6}}
\put(23,18){\vector(1,-1){6}}

\put(8.5,7.5){\small$\bar{x}$}
\put(-11,8){\small$\bar{x}$}
\put(29,8){\small$\bar{x}$}

\put(9,6){\vector(-1,-1){6}}
\put(11,6){\vector(1,-1){6}}

\put(-12,7){\vector(-1,-1){6}}
\put(-10,7){\vector(1,-1){6}}

\put(30,7){\vector(-1,-1){6}}
\put(32,7){\vector(1,-1){6}}
\put(-3,8){\small $\cdots$}
\put(17,8){\small $\cdots$}

\put(-9,16){\tiny $\alpha_3$}
\put(11,15){\tiny $\alpha_1$}
\put(26,16){\tiny $\alpha_3$}

\put(-19,5){\tiny $\alpha_2$}
\put(-7,5){\tiny $\alpha_4$}

\put(2,4){\tiny $\alpha_2$}
\put(13,4){\tiny $\alpha_4$}

\put(23,5){\tiny $\alpha_2$}
\put(34,5){\tiny $\alpha_4$}

\put(-18.5,-4){\small $\vdots$}

\put(-4.5,-4){\small $\vdots$}

\put(2,-5){\small $\vdots$}
\put(16,-5){\small $\vdots$}

\put(23.5,-4){\small $\vdots$}
\put(37.5,-4){\small $\vdots$}

\put(-30,-20){Fig. 10. The structure of indecomposable}
\put(-14,-26){projective module $\Lambda _Ee_{\bar{i}}$.}
\end{picture}}
\put(85,0){\begin{picture}(50,50)
\put(9,30){\small$i$}
\put(9,29){\vector(-1,-1){6}}
\put(11,29){\vector(1,-1){6}}
\put(10,29){\vector(0,-1){6}}
\put(4,22){\small $\cdots$}
\put(11,22){\small $\cdots$}
\put(17,19){\small $\ddots$}
\put(9.5,18){\small $\vdots$}
\put(1.5,21){\small $\cdot$}
\put(0,19.5){\small $\cdot$}
\put(-1.5,18){\small $\cdot$}

\put(-2,18){\vector(-1,-1){6}}
\put(10,17){\vector(0,-1){6}}
\put(23,18){\vector(1,-1){6}}

\put(8.5,7.5){\small$x$}
\put(-13,8){\tiny$E({x})$}
\put(27,8){\tiny$E(x)$}

\put(9,6){\vector(-1,-1){6}}
\put(11,6){\vector(1,-1){6}}

\put(-10,7){\vector(1,-1){6}}

\put(30,7){\vector(-1,-1){6}}
\put(39,18){\vector(-1,-1){6}}

\put(39,19){\small$x$}
\put(41,18){\vector(1,-1){6}}
\put(43,16){\tiny $\alpha_2$}
\put(47,7){\small $\vdots$}

\put(34,16){\tiny $\gamma$}

\put(-19,18){\vector(1,-1){6}}
\put(-21,19){\small $x$}
\put(-21,18){\vector(-1,-1){6}}

\put(-28,7){\small $\vdots$}
\put(-28,16){\tiny $\alpha_2$}
\put(-17,16){\tiny $\gamma$}

\put(-3,8){\small $\cdots$}
\put(17,8){\small $\cdots$}

\put(-9,16){\tiny $\alpha_3$}
\put(11,15){\tiny $\alpha_1$}
\put(26,16){\tiny $\alpha_3$}

\put(-7,5){\tiny $\alpha_4$}

\put(2,4){\tiny $\alpha_2$}
\put(13,4){\tiny $\gamma$}

\put(13,-3){\tiny $E(x)$}
\put(16.5,-4){\vector(0,-1){6}}

\put(23,5){\tiny $\alpha_4$}

\put(-4.5,-4){\small $\vdots$}

\put(2,-5){\small $\vdots$}
\put(16,-15){\small $\vdots$}

\put(23.5,-4){\small $\vdots$}

\put(17,-7){\tiny $\alpha_4$}

\qbezier[40](40,28)(35,14)(40,0)
\qbezier[40](50,28)(55,14)(50,0)
\qbezier[15](40,28)(45,33)(50,28)
\qbezier[15](40,0)(45,-5)(50,0)
\put(43,22){$B_x$}

\qbezier[40](-29,28)(-34,14)(-29,0)
\qbezier[40](-19,28)(-14,14)(-19,0)
\qbezier[15](-29,28)(-24,33)(-19,28)
\qbezier[15](-29,0)(-24,-5)(-19,0)

\put(-28,22){$B_x$}

\qbezier[50](-9,32)(-20,11)(-9,-10)
\qbezier[50](30,32)(41,11)(30,-10)
\qbezier[50](-9,32)(10.5,47)(30,32)
\qbezier[50](-9,-10)(10.5,-25)(30,-10)

\put(-5,27){$ \bar{\Lambda }e_i$}

\put(-30,-26){Fig. 11. The structure of $\Phi(\Lambda _Ee_{\bar{i}})$.}
\end{picture}}
\end{picture}
\vspace{0.5cm}
\end{center}
By definition, there is a short exact sequence $0\rightarrow \bar{\Lambda }e_{E(x)} \xrightarrow{f_\gamma} \bar{\Lambda }e_x\rightarrow B_x\rightarrow0$. Then we get the following commutative diagram:

\[\xymatrix{ &(\bar{\Lambda } e_{E(x)})^{\oplus t_i} \ar@{=}[r] \ar[d] & (\bar{\Lambda } e_{E(x)})^{\oplus t_i}\ar[d]\\
\bar{\Lambda }e_i\ar[r]\ar@{=}[d] & \bar{\Lambda }e_i\oplus (\bar{\Lambda }e_x)^{\oplus t_i}\ar[r]\ar[d]& (\bar{\Lambda }e_x)^{\oplus t_i} \ar[d]\\
\bar{\Lambda }e_i\ar[r]& \Phi(\Lambda_E e_{\bar{i} })\ar[r]& B_x^{\oplus t_i}.
}\]
So the short exact sequence (\ref{equation finite projective dimension of Phi 1}) follows.
\end{proof}

\begin{lemma}\label{lemma homological property of Bx}
Keep the notations as above. Assume that there is only one pair of vertices $(x,E(x))$ such that $x\neq E(x)$, and the added arrow for $\bar{Q}$ is $\gamma=\alpha_{(x,E(x))}:x\rightarrow E(x)$. Let $B_x$ be the cokernel of the natural injective morphism $f_\gamma: \bar{\Lambda } e_{E(x)}\rightarrow  \bar{\Lambda } e_{x}$ induced by the right multiplication of the arrow $\gamma$. Then
$$\Hom_{\bar{\Lambda }}(B_x,\Phi(M))=0=\Ext^i_{\bar{\Lambda }}(B_x,\Phi(M))$$
for any $i\geq1$ and $M\in\mod \Lambda_E$. In particular, $\End_{\bar{\Lambda }}(B_x)\cong K$, i.e., $B_x$ is a brick.
\end{lemma}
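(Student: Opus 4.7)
\medskip
\noindent\textbf{Proof plan.} The whole lemma rests on the obvious projective presentation
\begin{equation*}
0 \to \bar{S}e_{E(x)} \xrightarrow{f_\gamma} \bar{S}e_x \to B_x \to 0,
\end{equation*}
so the plan is to apply $\Hom_{\bar{S}}(-,N)$ and read off everything from the induced map on graded pieces. Since $\Hom_{\bar{S}}(\bar{S}e_y,N)\cong N_y$ canonically, the connecting map identifies with left multiplication by $\gamma$, giving the four-term exact sequence
\begin{equation*}
0 \to \Hom_{\bar{S}}(B_x,N) \to N_x \xrightarrow{\psi_\gamma^{N}} N_{E(x)} \to \Ext^1_{\bar{S}}(B_x,N) \to 0,
\end{equation*}
while the length-one resolution forces $\Ext^i_{\bar{S}}(B_x,N)=0$ for $i\geq 2$.

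For $N=\Phi(M)$, I would invoke the very definition of $\Phi$: one sets $\Phi(M)_x=\Phi(M)_{E(x)}=M_{\bar{x}}$ and declares the structure map along the newly added arrow $\gamma$ to be the identity. Thus $\psi_\gamma^{\Phi(M)}=\mathrm{id}_{M_{\bar{x}}}$ is bijective, forcing both $\Hom_{\bar{S}}(B_x,\Phi(M))=0$ and $\Ext^1_{\bar{S}}(B_x,\Phi(M))=0$, which together with the vanishing in higher degrees handles the first two assertions.

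For the brick statement, I would specialise the above sequence to $N=B_x$, reducing the problem to proving $\ker\psi_\gamma^{B_x}=k\cdot[e_x]$. Here the key input is Lemma \ref{lemma finite dimension of gluing algebras}: since $E$ is an involution, applying the lemma to the path $p$ (with $m=1$) in both directions shows that in $S$ there is no nonzero path from $x$ to $E(x)$ \emph{and} no nonzero path from $E(x)$ to $x$. A short combinatorial argument then shows that any nonzero path in $\bar{Q}$ between $x$ and $E(x)$ in either direction must alternate uses of $\gamma$ with nonzero $Q$-paths between $E(x)$ and $x$, which is impossible. Consequently $(B_x)_x$ has basis the classes of the nonzero $Q$-paths from $x$ to $x$, and $(B_x)_{E(x)}$ has basis the classes of the paths of the form $q_2\gamma q_1$ with $q_1$ a nontrivial nonzero $Q$-path from $x$ to $x$ and $q_2$ a nonzero $Q$-path from $E(x)$ to $E(x)$.

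With these bases in hand, the map $\psi_\gamma^{B_x}:m\mapsto[\gamma m]$ is transparent: the trivial path $e_x$ is killed because $\gamma e_x=\gamma$ lies in $\mathrm{Im}(f_\gamma)$, while a nontrivial $Q$-path $m$ from $x$ to $x$ produces $\gamma m$, whose first arrow coincides with the first arrow of $m$ (an arrow of $Q$), so $\gamma m \notin \mathrm{Im}(f_\gamma)$ and distinct $m$'s yield linearly independent classes. Hence $\ker\psi_\gamma^{B_x}=k\cdot[e_x]$, giving $\End_{\bar{S}}(B_x)\cong k$. The main obstacle is the combinatorial verification that no nonzero path between $x$ and $E(x)$ exists in $\bar{S}$ in either direction — this is the place where the hypothesis that $S_E$ be finite-dimensional (via Lemma \ref{lemma finite dimension of gluing algebras}) and the involutive nature of $E$ are both essential.
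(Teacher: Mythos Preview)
Your argument for the vanishing of $\Hom$ and $\Ext^i$ is exactly what the paper does, only written out more explicitly: the paper also applies $\Hom_{\bar S}(-,\Phi(M))$ to the short exact sequence $0\to\bar Se_{E(x)}\xrightarrow{f_\gamma}\bar Se_x\to B_x\to 0$ and uses that, by construction of $\Phi$, every $g:\bar Se_{E(x)}\to\Phi(M)$ extends along $f_\gamma$ (equivalently, $\psi_\gamma^{\Phi(M)}$ is bijective). So there is no daylight between the two approaches here.

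For the brick statement the two proofs genuinely diverge. The paper does not compute $\ker\psi_\gamma^{B_x}$ directly; instead it observes that every proper submodule of $B_x$ lies in $\Im\Phi$, and then invokes the already-proved vanishing $\Hom_{\bar S}(B_x,\Phi(-))=0$ to conclude that any non-invertible endomorphism is zero. Your route -- specialising the four-term sequence to $N=B_x$ and identifying $\End_{\bar S}(B_x)$ with $\ker\psi_\gamma^{B_x}$ -- is an honest alternative and arguably more transparent. One caution, though: your computation is phrased in terms of ``nonzero $Q$-paths'' and the ``first arrow'' of a path, which presupposes that $S$ is monomial, whereas the section explicitly allows $S=kQ/I$ to be arbitrary. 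The fix is painless: grade $\bar S$ by the number of occurrences of $\gamma$ (this is legitimate because $\gamma$ appears in no relation), so that $\bar S^{(0)}=S$ and $e_{E(x)}\bar S e_x$ sits entirely in degree~$1$, giving canonical identifications $(B_x)_x\cong e_xSe_x$ and $(B_x)_{E(x)}\cong e_{E(x)}Se_{E(x)}\otimes_k e_xJ_Se_x$. Under these identifications $\psi_\gamma^{B_x}$ sends $m$ to $e_{E(x)}\otimes\bar m$, which visibly has kernel $k\cdot e_x$. With that rephrasing your argument goes through without the monomial hypothesis.
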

\begin{proof}
For any morphism $f:B_x\rightarrow \Phi(M)$, by restricting to the full subquiver formed by the added arrow $\gamma$,  it is easy to see that $f_x: (B_x)_x\rightarrow (\Phi(M))_x$ is zero. Together with $\Top(B_x)=S_x$, we have $\Top(\pi): \Top(B_x)\rightarrow \Top(\Im f)$ is zero, where $\pi: B_x\rightarrow \Im f$ is the epimorphism induced by $f$. So $\pi=0$ and then $ f=0$. Therefore, $\Hom_{\bar{\Lambda }}(B_x,\Phi(M))=0$.

On the other hand, $\Hom_{\bar{\Lambda }}(\bar{\Lambda }e_{x},\Phi(M))\cong e_x\Phi(M)$, $\Hom_{\bar{\Lambda }}(\bar{\Lambda }e_{E(x)},\Phi(M))\cong e_{E(x)}\Phi(M)$. It follows from the definition of $\Phi$ that
$\dim_K\Hom_{\bar{\Lambda }}(\bar{\Lambda }e_{x},\Phi(M))=\dim_K \Hom_{\bar{\Lambda }}(\bar{\Lambda }e_{E(x)},\Phi(M))$. By applying $\Hom_{\bar{\Lambda }}(-,\Phi(M))$ to
$$0\rightarrow \bar{\Lambda }e_{E(x)} \xrightarrow{f_\gamma} \bar{\Lambda }e_x\rightarrow B_x\rightarrow0,$$
we have a long exact sequence, $$0=\Hom_{\bar{\Lambda }}(B_x,\Phi(M))\rightarrow  \Hom_{\bar{\Lambda }}(\bar{\Lambda }e_{x},\Phi(M)) \rightarrow\Hom_{\bar{\Lambda }}(\bar{\Lambda }e_{E(x)},\Phi(M))\rightarrow \Ext^1_{\bar{\Lambda }}(B_x,\Phi(M))\rightarrow0.$$
Then $\Ext^1_{\bar{\Lambda }}(B_x,\Phi(M))=0$. Our desired result follows by noting that $\pd_{\bar{\Lambda }}(B_x)\leq1$.

From the structure of $B_x$, it is easy to see that $\rad (B_x)\subseteq\Im\Phi$. By applying $\Hom_{\bar{\Lambda }}(B_x,-)$ to the short exact sequence
$$0\rightarrow \rad(B_x)\rightarrow B_x\rightarrow S_x\rightarrow0,$$
we have $\End_{\bar{\Lambda }}(B_x)\cong K$.
\end{proof}

\begin{proposition}
\label{prop:fully faithful}
Keep the notations as above. Assume that there is only one pair of vertices $(x,E(x))$ such that $x\neq E(x)$, and the added arrow for $\bar{Q}$ is $\gamma=\alpha_{(x,E(x))}:x\rightarrow E(x)$. Then the derived functor of $\Phi:D^b(\mod \Lambda_E)\rightarrow D^b(\mod \bar{\Lambda })$ is fully faithful.
\end{proposition}

\begin{proof}
Since $\Phi:\mod \Lambda_E\rightarrow \mod \bar{\Lambda }$ is exact, its derived functor is $D^b(\Phi): D^b(\mod \Lambda_E)\rightarrow D^b(\mod \bar{\Lambda })$. In order to prove  that $D^b(\Phi)$ is fully faithful, it is enough to prove that $\Phi$ induces
\begin{align}
\Ext^i_{\bar{\Lambda }}(\Phi(M),\Phi(N))\cong \Ext^i_{\Lambda_E}(M,N)\text{ for any } M,N\in\mod \Lambda_E, \text{ and }i>0.
\end{align}

Take a projective resolution of $M$
$$\cdots \xrightarrow{f^{n+1}} U^n\xrightarrow{f^n}\cdots \xrightarrow{f^1}U^0\xrightarrow{f^0}M\rightarrow0.$$
Denote by $M^i=\Im f^i$ for any $i\geq0$. Note that $M^i$ is a $i$-th syzygy of $M$, and $M^0=M$.

First, we check that $\Phi$ induces that $\Ext^1_{\bar{\Lambda }}(\Phi(M),\Phi(N))\cong \Ext^1_{\Lambda_E}(M,N)$. In fact, we get a short exact sequence
\begin{equation}\label{equation 2}
0\rightarrow \Phi(M^1)\rightarrow\Phi(U^0)\xrightarrow{\Phi(f^0)} \Phi(M)\rightarrow0.
\end{equation}
From (\ref{equation finite projective dimension of Phi 2}), there is a short exact sequence
\begin{equation}\label{eqn:3}
0\rightarrow Q^0\rightarrow\Phi(U^0)\rightarrow B_x^{\oplus t_0}\rightarrow0
\end{equation}
for some projective $\bar{\Lambda }$-module $Q^0$.
By applying $\Hom_{\bar{\Lambda }}(-,\Phi(N))$ to (\ref{eqn:3}), from Lemma \ref{lemma homological property of Bx}, we get that
$$\Ext^i_{\bar{\Lambda }}(\Phi(U^0),\Phi(N))=0,\text{ for any }i>0.$$
Applying $\Hom_{\bar{\Lambda }}(-,\Phi(N))$ to (\ref{equation 2}) yields the following long exact sequence
$$\Hom_{\bar{\Lambda }}(\Phi(U^0),\Phi(N))\rightarrow \Hom_{\bar{\Lambda }}(\Phi(M^1),\Phi(N))\rightarrow \Ext^1_{\bar{\Lambda }}(\Phi(M),\Phi(N))\rightarrow0.$$
Because $\Phi$ is fully faithful, it is easy to see that $\Ext^1_{\bar{\Lambda }}(\Phi(M),\Phi(N))\cong \Ext^1_{\Lambda_E}(M,N)$.

For $i>1$, by induction, we get that $\Phi$ induces that $\Ext_{\bar{\Lambda }}^{i-1}(\Phi(M^1),\Phi(N))\cong \Ext^{i-1}_{\Lambda_E}(M^1,N)$, and then
$$\Ext^i_{\bar{\Lambda }}(\Phi(M),\Phi(N))\cong \Ext_{\bar{\Lambda }}^{i-1}(\Phi(M^1),\Phi(N))\cong \Ext^{i-1}_{\Lambda_E}(M^1,N)\cong \Ext^i_{\Lambda_E}(M,N).$$

From the above, we have that $D^b(\Phi)$ induces $$\Hom_{D^b(\bar{\Lambda })}(D^b(\Phi)(M^\bullet), D^b(\Phi)(N^\bullet))=\Hom_{D^b(\Lambda_E)}(M^\bullet, N^\bullet)$$ for any stalk complexes $M^\bullet$, $N^\bullet$.
Since bounded derived category is generated by stalk complexes, we have that $D^b(\Phi)$ is fully faithful.
\end{proof}

By abusing notations, we use $\Phi$ to denote its derived functor $D^b(\Phi)$ in the following.

\begin{lemma}[\text{\cite[Lemma 4.7.1]{Kra}}]\label{lemma fully faithful of functors in quotient categories}
Let $\ct$ be a triangulated category with two full triangulated subcategories $\ct'$ and $\cs$. Set $\cs'=\cs\cap \ct'$. Then the natural inclusion $\ct'\hookrightarrow \ct$ induces an exact functor $J:\ct'/\cs'\rightarrow \ct/\cs$. Suppose that either
\begin{itemize}
\item[(a)] every morphism from an object in $\cs$ to an object in $\ct'$ factors through some object in $\cs'$, or
\item[(b)] every morphism from an object in $\ct'$ to an object in $\cs$ factors through some object in $\cs'$.
\end{itemize}
Then the induced functor $J:\ct'/\cs'\rightarrow \ct/\cs$ is fully faithful.
\end{lemma}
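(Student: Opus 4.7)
The plan is to compute in the Verdier quotients via the calculus of right fractions: a morphism $X \to Y$ in $\ct/\cs$ with $X, Y \in \ct'$ is represented by a roof $X \xleftarrow{s} Z \xrightarrow{f} Y$ in $\ct$ with $\cone(s) \in \cs$, and it is zero iff there exists $t: W \to Z$ with $\cone(t) \in \cs$ and $ft = 0$. Consequently, to show $J$ is full (resp.\ faithful) it suffices to replace every such $s$ (resp.\ $t$) by a morphism whose source lies in $\ct'$ and whose cone lies in $\cs'$. I will carry this out under hypothesis (b); hypothesis (a) follows by passing to $\ct^{op}$, where (a) for $\ct$ becomes (b) for $\ct^{op}$ and right fractions become left fractions.

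For fullness, take a roof $X \xleftarrow{s} Z \xrightarrow{f} Y$ and complete $s$ to a distinguished triangle $Z \xrightarrow{s} X \xrightarrow{g} C \to Z[1]$ with $C \in \cs$. By (b), the morphism $g$ from $X \in \ct'$ to $C \in \cs$ factors as $X \xrightarrow{g'} C' \xrightarrow{h} C$ for some $C' \in \cs'$. Completing $g'$ yields a triangle $Z' \xrightarrow{s'} X \xrightarrow{g'} C' \to Z'[1]$ with $Z' \in \ct'$ (since $\ct'$ is triangulated and contains $X, C'$) and $\cone(s') = C' \in \cs'$. The morphism-of-triangles axiom, applied to the commuting relation $g = h g'$, produces $\phi: Z' \to Z$ with $s \phi = s'$. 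Setting $f' := f \phi$ gives a roof $X \xleftarrow{s'} Z' \xrightarrow{f'} Y$ living entirely in $\ct'$ with $\cone(s') \in \cs'$, and $\phi$ exhibits it as equivalent to the original roof in $\ct/\cs$. Hence its class in $\Hom_{\ct'/\cs'}(X, Y)$ is sent by $J$ to the prescribed morphism.

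For faithfulness, let $\alpha \in \Hom_{\ct'/\cs'}(X, Y)$ be represented by a roof $X \xleftarrow{s'} Z' \xrightarrow{f'} Y$ with $Z' \in \ct'$ and $\cone(s') \in \cs'$, and assume $J(\alpha) = 0$. By the vanishing criterion recalled above there exists $t: W \to Z'$ in $\ct$ with $\cone(t) \in \cs$ and $f' t = 0$. Running the construction of the previous paragraph with $t$ in place of $s$—complete $t$ to a triangle $W \xrightarrow{t} Z' \xrightarrow{k} D \to W[1]$, factor $k$ through some $D' \in \cs'$ using (b), and complete to $W' \xrightarrow{t'} Z' \to D'$ with $W' \in \ct'$ and $\cone(t') = D' \in \cs'$—produces $u: W' \to W$ satisfying $t u = t'$. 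Then $f' t' = f' t u = 0$, which is exactly what is required for $\alpha$ to vanish in $\ct'/\cs'$. The delicate point throughout is ensuring that the cones can be simultaneously downgraded from $\cs$ into $\cs'$ while keeping the replaced vertices inside $\ct'$; this is precisely what hypothesis (b) (dually (a)) guarantees, and everything else is a routine invocation of the triangulated axioms.
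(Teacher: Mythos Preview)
The paper does not supply its own proof of this lemma; it is quoted from Krause \cite{Kra} and used as a black box. Your argument via the calculus of right fractions is correct and is essentially the standard proof one finds in that reference: replace the cone of the denominator by an object of $\cs'$ using the factorisation hypothesis, rebuild the roof inside $\ct'$, and invoke TR3 to compare. The duality reduction from (a) to (b) is also fine, since full faithfulness of $J$ is self-dual and $(\ct/\cs)^{op}\simeq \ct^{op}/\cs^{op}$.
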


With the help of Lemma \ref{lemma fully faithful of functors in quotient categories} and Proposition \ref{prop:fully faithful}, we shall prove that $\Phi:\mod \Lambda_E\rightarrow \mod \bar{\Lambda }$ induces a triangle equivalence $\widetilde{\Phi}:D_{sg}(\Lambda_E)\rightarrow D_{sg}( \bar{\Lambda })$ in the following.

\begin{proposition}\label{proposition singularity equivalence of SE barS}
Keep the notations as above. Assume that there is only one pair of vertices $(x,E(x))$ such that $x\neq E(x)$, and the added arrow for $\bar{Q}$ is $\gamma=\alpha_{(x,E(x))}:x\rightarrow E(x)$. Then the exact functor $\Phi:\mod \Lambda_E\rightarrow \mod \bar{\Lambda }$ induces a triangle equivalence $\widetilde{\Phi}:D_{sg}(\Lambda_E)\rightarrow D_{sg}( \bar{\Lambda })$.
\end{proposition}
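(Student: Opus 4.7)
The plan is to verify that $\Phi$ descends to a functor $\widetilde{\Phi}: D_{sg}(S_E) \to D_{sg}(\bar{S})$, then establish full faithfulness via Lemma \ref{lemma fully faithful of functors in quotient categories} (Krause's lemma) and essential surjectivity by placing every simple $\bar{S}$-module in the essential image.

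First, $\widetilde{\Phi}$ is well-defined because each $\Phi(S_E e_{\bar{i}})$ has finite $\bar{S}$-projective dimension: for $i \neq E(x)$ this is immediate from (\ref{equation finite projective dimension of Phi 1}), while $\Phi(S_E e_{\bar{x}}) = \bar{S} e_x$ is already $\bar{S}$-projective by Lemma \ref{lemma exact sequence for Phi}. Hence $\Phi$ sends $K^b(\proj S_E)$ into $K^b(\proj \bar{S})$ and induces $\widetilde{\Phi}$. For full faithfulness I apply Krause's lemma with $\ct = D^b(\mod \bar{S})$, $\cs = K^b(\proj \bar{S})$, $\ct' = \Phi(D^b(\mod S_E))$ (a triangulated subcategory by full faithfulness and exactness of $\Phi$), and $\cs' = \cs \cap \ct'$. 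Two facts are needed. (i) One shows $\Phi^{-1}(\cs') = K^b(\proj S_E)$, so that the equivalence $\Phi: D^b(\mod S_E) \simeq \ct'$ descends to an equivalence $D_{sg}(S_E) \simeq \ct'/\cs'$: if $\Phi(Y) \in K^b(\proj \bar{S})$ then $\Ext^i_{\bar{S}}(\Phi(Y), -)$ vanishes for $i > m := \pd_{\bar{S}} \Phi(Y)$, and since $\Phi$ is an exact fully faithful embedding on $D^b$ one has $\Ext^i_{S_E}(Y, N) \cong \Ext^i_{\bar{S}}(\Phi(Y), \Phi(N))$ for every $N \in \mod S_E$, so uniform vanishing of the right side for $i > m$ yields $Y \in K^b(\proj S_E)$. (ii) Condition (a) of Krause's lemma holds: the class of $P$ for which every morphism $P \to \Phi(M)$ factors through $\cs'$ is a thick subcategory of $\ct$, so it suffices to check this for $P = \bar{S} e_j$. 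When $j \neq E(x)$, applying $\Hom_{\bar{S}}(-, \Phi(M))$ to (\ref{equation finite projective dimension of Phi 2}) and invoking the vanishing from Lemma \ref{lemma homological property of Bx} (extended from modules to bounded complexes via the length-one projective resolution of $B_x$) lifts the morphism through $\Phi(S_E e_{\bar{j}}) \in \cs'$; when $j = E(x)$, applying $\Hom_{\bar{S}}(-, \Phi(M))$ to $0 \to \bar{S} e_{E(x)} \to \bar{S} e_x \to B_x \to 0$ lifts through $\bar{S} e_x = \Phi(S_E e_{\bar{x}}) \in \cs'$.

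For essential surjectivity, let $\widetilde{\ct}$ denote the essential image of $\widetilde{\Phi}$, which is a triangulated subcategory of $D_{sg}(\bar{S})$. Since every $\bar{S}$-module admits a composition series, $D_{sg}(\bar{S})$ equals the triangulated closure of $\{S_j : j \in \bar{Q}_0\}$, so it suffices to place each simple in $\widetilde{\ct}$. For $j \notin \{x, E(x)\}$ one has $S_j = \Phi(S_{\bar{j}})$ directly. For $j = E(x)$, the resolution of $B_x$ gives $\Ext^1_{\bar{S}}(B_x, S_{E(x)}) \cong \coker(\psi_\gamma^{S_{E(x)}}) = k$, producing a non-split extension $0 \to S_{E(x)} \to E \to B_x \to 0$; if $\psi_\gamma^E$ were zero then $E$ would be an $S = \bar{S}/(\gamma)$-module, but $B_x = Se_x$ is $S$-projective so this extension would split in $\mod S$, a contradiction. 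Hence $\psi_\gamma^E: k \to k$ is a bijection, so $E \in \Im\Phi$, and since $B_x \cong 0$ in $D_{sg}(\bar{S})$ the extension triangle yields $S_{E(x)} \cong E \in \widetilde{\ct}$. Finally, the exact sequence $0 \to S_{E(x)} \to \Phi(S_{\bar{x}}) \to S_x \to 0$ inherent in the structure of $\Phi(S_{\bar{x}})$ places $S_x$ in $\widetilde{\ct}$ as a cone.

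The main obstacle is the verification of condition (a) of Krause's lemma, which rests on the $\Ext$-vanishing $\Ext^i_{\bar{S}}(B_x, \Phi(-)) = 0$ of Lemma \ref{lemma homological property of Bx}; extending this vanishing from a single $S_E$-module to a bounded complex is the technical step, handled by a hypercohomology argument using the length-one projective resolution of $B_x$ together with exactness of $\Phi$.
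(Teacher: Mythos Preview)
Your framework---Krause's lemma for full faithfulness, then density via simples---follows the paper's, and your verification that $\Phi^{-1}(K^b(\proj\bar{S}))=K^b(\proj S_E)$ via the Ext-characterization of perfect complexes is actually cleaner than the paper's hands-on construction. Two steps, however, have genuine gaps.

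For condition (a), the assertion that the class of $P$ for which every map $P\to\ct'$ factors through $\cs'$ is thick is not a general fact about triangulated subcategories and is left unjustified. What your check on $\bar{S} e_j$ actually produces is a triangle $\bar{S} e_j\to\Phi(S_E e_{\bar{j}})\to B_x^{\oplus t_j}$ with middle term in $\cs'$ and cone in $\thick(B_x)\subset{}^\bot\ct'$; the class of $P$ sitting in such a triangle \emph{is} closed under cones (orthogonality lets one lift $P_1\to P_2$ to $T_1\to T_2$, then apply the $3{\times}3$ lemma), and that is what one must invoke. The paper avoids this subtlety by building the approximation $Q^\bullet\to\Phi(P^\bullet)$ term by term for an arbitrary bounded complex of projectives. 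For density, your claims $B_x=Se_x$ and $\psi_\gamma^E:k\to k$ presuppose $e_xSe_x=k$. When there is a nonzero nontrivial path $q_1:x\to x$ in $S$, the class $[\gamma q_1]$ is nonzero in $B_x$, so $\gamma$ does not act trivially on $B_x$, and $(B_x)_x=e_xSe_x$ has dimension $>1$; your contradiction then yields only $\psi_\gamma^E\ne0$, not bijectivity. The conclusion $E\in\Im\Phi$ does survive, because finite-dimensionality of $S_E$ forces $e_xSe_x=k$ or $e_{E(x)}Se_{E(x)}=k$ (else a nontrivial loop at $x$ composed with one at $E(x)$ gives arbitrarily long nonzero paths in $S_E$), and in the second case one checks directly that $E=\bar{S}e_x/f_\gamma(\rad\bar{S}e_{E(x)})$ has $\psi_\gamma^E$ bijective; but this dichotomy is absent from your argument. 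The paper's density proof sidesteps all of this by working uniformly with complexes in $K^{-,b}(\proj\bar{S})$.
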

\begin{proof}
From (\ref{equation finite projective dimension of Phi 1}),
we get that $\pd_{\bar{\Lambda }}\Phi(M)< \infty$ if $\pd_{\Lambda_E} M<\infty$ for any finitely generated $\Lambda_E$-module $M$ since $\Phi$ is exact.
So $\Phi(K^b(\proj \Lambda_E)) \subseteq K^b(\proj \bar{\Lambda })$, which implies that $\Phi$ induces a triangulated  functor $\widetilde{\Phi}:D_{sg}(\Lambda_E)\rightarrow D_{sg}( \bar{\Lambda })$.
In the following, on the basis of Proposition \ref{prop:fully faithful}, we use Lemma \ref{lemma fully faithful of functors in quotient categories} to prove that $\widetilde{\Phi}$ is fully faithful.

First, we check that $\Phi(K^b(\proj \Lambda_E)) =K^b(\proj \bar{\Lambda })\cap \Phi(D^b(\mod \Lambda _E))$. From the above, we get that $\Phi(K^b(\proj \Lambda _E)) \subseteq K^b(\proj \bar{\Lambda })\cap \Phi(D^b(\mod \Lambda _E))$. Conversely, for any object in $K^b(\proj \bar{\Lambda })\cap \Phi(D^b(\mod \Lambda _E))$, it is of the form
$$\Phi(X^\bullet)=\cdots \xrightarrow{\Phi(d^{i-1})} \Phi(X^{i})\xrightarrow{\Phi(d^{i})} \Phi(X^{i+1})\xrightarrow{\Phi(d^{i+1})}\Phi(X^{i+2})\xrightarrow{\Phi(d^{i+2})} \cdots$$
for some bounded complex
$$X^\bullet=\cdots \xrightarrow{d^{i-1}} X^{i}\xrightarrow{d^{i}} X^{i+1}\xrightarrow{d^{i+1}} X^{i+2}\xrightarrow{d^{i+2}} \cdots.$$
Since $\Phi(X^\bullet)\in K^b(\proj \bar{\Lambda })$, from the dual of \cite[Lemma 4.1 b)]{Ke0}, there is an epimorphism of complexes $f^\bullet:Q^\bullet\rightarrow \Phi(X^\bullet)$ such that $f^\bullet$ is a quasi-isomorphism, where
$Q^\bullet= \cdots \xrightarrow{e^{i-1}} Q^{i}\xrightarrow{e^{i}} Q^{i+1}\xrightarrow{e^{i+1}} Q^{i+2}\xrightarrow{e^{i+2}} \cdots$ is a bounded complex with $Q^i\in \proj \bar{\Lambda }$ for all $i$. Here $f^\bullet=(f^i:Q^i\rightarrow \Phi(X^i))_i$.
For any $Q^i$, by (\ref{equation finite projective dimension of Phi 2}), there exists
$P^i\in\proj \Lambda_E$ such that
\begin{equation}\label{equation projective resolution of Bx}
0\rightarrow Q^i\xrightarrow{g^i}\Phi(P^i) \rightarrow B_x^{\oplus s_i}\rightarrow0
\end{equation}
is exact for some $s_i>0$. By applying $\Hom_{\bar{\Lambda}}(-, \Phi(X^i))$ to (\ref{equation projective resolution of Bx}), using
Lemma \ref{lemma homological property of Bx}, we have that $f^i$ factors through $g^i$, and then there exists $h^i:P^i\rightarrow X^i$ such that $\Phi(h^i)g^i=f^i$ for any $i$ since $\Phi$ is fully faithful.

Similar to the above, by applying $\Hom_{\bar{\Lambda }}(-,\Phi(P^{i+1}))$ to (\ref{equation projective resolution of Bx}), there exists $p^i: P^i\rightarrow P^{i+1}$ such that the following diagram is commutative for each $i$ since $\Phi$ is fully faithful:
\[\xymatrix{ \Phi(P^i)\ar[rr]^{\Phi(p^i)}  && \Phi(P^{i+1}) \\
Q^i\ar[rr]^{e^i} \ar@{>->}[u]^{g^i} && Q^{i+1}.\ar@{>->}[u]^{g^{i+1}}   }\]
It follows from $\Phi(p^{i+1}p^i)g^i=g^{i+2}e^{i+1}e^i=0$ that $\Phi(p^{i+1}p^i)$ factors through $B_x^{\oplus s_i}$, which implies that
$\Phi(p^{i+1}p^i)=0$ since $\Hom_{\bar{\Lambda }}(B_x, \Phi(P_{i+2}))=0$ by Lemma \ref{lemma homological property of Bx}. So $p^{i+1}p^i=0$ for any $i$, and then $P^\bullet=(P^i,p^i)_i$ is a complex in $K^b(\proj \Lambda_E)$. In particular $g^\bullet=(g^i: Q^i\rightarrow \Phi(P^i))_i$ is a morphism of complexes.

We have that $h^\bullet=(h^i: P^i\rightarrow X^i)_i$ is a morphism of complexes. In fact, $$\Phi(h^{i+1} p^i) g^i= \Phi(h^{i+1}) g^{i+1} e^i =f^{i+1}e^i=\Phi(d^i) f^i= \Phi(d^ih^i)g^i,$$
so $\Phi(d^ih^i-h^{i+1}p^i)g^i=0$, which implies that $\Phi(d^ih^i-h^{i+1}p^i)$ factors through $B_x^{\oplus s_i}$ and then $\Phi(d^ih^i-h^{i+1}p^i)=0$ similarly to the above. Since $\Phi$ is fully faithful, we have $h^{i+1}p^i=d^ih^i$ for any $i$.
%Then $h^\bullet=(h^i: P^i\rightarrow X^i)_i$ is a morphism of complexes.

We claim that $h^\bullet$ is a quasi-isomorphism. As $f^\bullet$ is a quasi-isomorphism and $\Phi$ is fully faithful and exact, $\Phi(f^\bullet)$ is also a quasi-isomorphism. From the fact $\Phi(h^\bullet) g^\bullet=\Phi(f^\bullet)$, we only need to check that $g^\bullet$ is a quasi-isomorphism.

Let $B^\bullet=(B^i,u_i)_i$ be the cokernel of $g^\bullet$. Then $B^i=B_x^{\oplus s_i}$. This gives a short exact sequence of complexes
$0\rightarrow Q^\bullet\xrightarrow{g^\bullet} \Phi(P^\bullet)\rightarrow B^\bullet\rightarrow0$, and then a long exact sequence of cohomology groups:
$$\cdots \to H^i(Q^\bullet) \rightarrow H^i(\Phi(P^\bullet))\rightarrow H^i(B^\bullet) \rightarrow H^{i+1}(Q^\bullet) \rightarrow\cdots. $$

Note that $\End_{\bar{\Lambda }}(B_x)=K$ by Lemma \ref{lemma homological property of Bx}. Then $H^i(B^\bullet)= B_x^{\oplus t_i}$ for some $t_i$.
On the other hand, $H^{i+1}(Q^\bullet)\cong H^{i+1}(\Phi(X^\bullet))=\Phi(H^{i+1}(X^\bullet))$, so $\Hom_{\bar{\Lambda }}( H^i(B^\bullet), H^{i+1}(Q^\bullet))=0$. Then $H^i(B^\bullet)$ is the cokernel of $H^i(Q^\bullet) \rightarrow H^i(\Phi(P^\bullet))$. Since both
$H^i(Q^\bullet)$ and $H^i(\Phi(P^\bullet))$ are in the image of $\Phi$, we get that $H^i(B^\bullet)\in\Im(\Phi)$. Together with $H^i(B^\bullet)= B_x^{\oplus t_i}$,  then $H^i(B^\bullet)=0$, and so $B^\bullet$ is acyclic. Therefore, $g^\bullet$ is a quasi-isomorphism. In conclusion, $h^\bullet$ is a quasi-isomorphism, and $X^\bullet\in K^b(\proj \Lambda_E)$ since $P^\bullet\in K^b(\proj \Lambda_E)$.

In  conclusion, we get that $\Phi(K^b(\proj \Lambda_E)) =K^b(\proj \bar{\Lambda })\cap \Phi(D^b(\mod \Lambda_E))$.

Second, we prove that $\Phi(K^b(\proj \Lambda_E))\subseteq \Phi(D^b(\mod \Lambda_E))$, $K^b(\proj \bar{\Lambda })\subseteq D^b(\mod \bar{\Lambda })$ satisfy Lemma \ref{lemma fully faithful of functors in quotient categories} (a). In fact, for any $Q^\bullet=(Q^i,e^i)_i\in K^b(\proj \bar{\Lambda })$, $X^\bullet=(X^i,d^i)_i\in D^b(\mod \Lambda_E)$, and any morphism $Q^\bullet\rightarrow \Phi(X^\bullet)$, from (\ref{equation projective resolution of Bx}) and similar to the above, there exists a complex $P^\bullet=(P^i,p^i)_i$ with $P^i \in\proj \Lambda_E$ such that there is a short exact sequences of bounded  complexes:
$0\rightarrow Q^\bullet\xrightarrow{g^\bullet} \Phi(P^\bullet)\rightarrow B^\bullet\rightarrow0$,
where $B^i=B_x^{\oplus s_i}$.
%there exists a short exact sequence for each $Q^i$:
%\begin{equation*}
%0\rightarrow Q^i\xrightarrow{g^i}\Phi(P^i) \rightarrow B_x^{\oplus s_i}\rightarrow0
%\end{equation*}
%where $P^i\in\proj \Lambda_E$.
%Similar to the above, there exists $p^i:P^i\rightarrow P^{i+1}$ such that the following diagram is commutative for each $i$:
%\[\xymatrix{ \Phi(P^i)\ar[rr]^{\Phi(p^i)}  && \Phi(P^{i+1}) \\
%Q^i\ar[rr]^{e^i} \ar@{>->}[u]^{g^i} && Q^{i+1},\ar@{>->}[u]^{g^{i+1}}   }\]
%nd $P^\bullet=(P^i,p^i)_i$ is a complex in $K^b(\proj \Lambda_E)$. Furthermore, similar to the above, $f^i: Q^i\rightarrow \Phi(X^i)$ factors through $g^i$ as $f^i=\Phi(h^i)g^i$ for some morphism $h^i:P^i\rightarrow X^i$ for each $i$, and $h^\bullet=(h^i:P^i\rightarrow X^i)_i: P^\bullet\rightarrow X^\bullet$ is a morphism of complexes.
So we get that $f^\bullet=\Phi(h^\bullet)g^\bullet$ factors through $\Phi(P^\bullet)$ which is in $\Phi(K^b(\proj \Lambda_E))$.
So $\Phi(K^b(\proj \Lambda_E))\subseteq \Phi(D^b(\mod \Lambda_E))$, $K^b(\proj \bar{\Lambda })\subseteq D^b(\mod \bar{\Lambda })$ satisfy Lemma \ref{lemma fully faithful of functors in quotient categories} (a), and then the induced functor $\widetilde{\Phi}: D_{sg}(\Lambda_E)\rightarrow D_{sg}(\bar{\Lambda })$ is fully faithful.

Finally, we check that $\widetilde{\Phi}$ is dense.

For any object in $D^b(\bar{\Lambda })$, we can assume that it is of the form $(Q^i,d^i)_i\in K^{-,b}(\proj \bar{\Lambda})$ by the equivalence $D^b(\bar{\Lambda})\simeq K^{-,b}(\proj \bar{\Lambda} )$. Similar to the above, there is a short exact sequences of bounded above complexes:
$0\rightarrow Q^\bullet\xrightarrow{g^\bullet} \Phi(P^\bullet)\rightarrow B^\bullet\rightarrow0$,
where $B^i=B_x^{\oplus s_i}$.
For $i$ sufficiently small, $H^i(Q^\bullet)=0$, and then
$H^i(\Phi(P^\bullet))\cong H^i(B^\bullet)$.
Since $H^i(\Phi(P^\bullet))\cong \Phi(H^i(P^\bullet))$ and $H^i(B^\bullet)=B_x^{\oplus t_i}$ for some $t_i$,
both of them are zero for $i$ sufficiently small.
Since $\pd_{\bar{\Lambda }}\Phi(P^i)\leq 1,\pd_{\bar{\Lambda }}B_x\leq1$ for any $i$ by Lemma \ref{lemma exact sequence for Phi}, we have $\Phi(P^\bullet),B^\bullet\in K^{-,b}(\proj \bar{\Lambda})$.

For $B^\bullet=(B^i,b^i)_i$, As $\Hom_{\bar{\Lambda }}(B_x,B_x)=K$, obviously, $\Im b^i\in \add B_x$. Denote by $n$ the number such that $H^i(B^\bullet)=0$ for any $i\leq n$. Then
there is a commutative diagram of complexes:
\[\xymatrix{ \cdots \ar[r]& B^{n-2} \ar@{=}[d] \ar[r]^{b^{n-2}} & B^{n-1} \ar[r] \ar@{=}[d] & \Im b^{n-1} \ar@{>->}[d] \ar[r] &0 \ar[r]\ar[d]&0\ar[r]\ar[d] &\cdots\\
\cdots \ar[r]& B^{n-2} \ar[d] \ar[r]^{b^{n-2}} & B^{n-1} \ar[r]^{b^{n-1}} \ar[d] & B^{n} \ar@{->>}[d] \ar[r]^{b^n} &B^{n+1} \ar[r]^{b^{n+1}}\ar@{=}[d]&B^{n+2}\ar[r]\ar@{=}[d] &\cdots\\
\cdots \ar[r]&0  \ar[r] & 0\ar[r] & B^{n}/\Im b^{n-1}  \ar[r] &B^{n+1} \ar[r]^{b^{n+1}}&B^{n+2}\ar[r]&\cdots.}\]
By the construction, the complex in the first row is acyclic, and $\Im b^{n-1},B^n/\Im b^{n-1}\in \add B_x$. Then $B^\bullet$ is in $K^b(\proj \bar{\Lambda })$ since it is isomorphic to the complex in the third row in $D^b(\mod \bar{\Lambda })$.
From $0\rightarrow Q^\bullet\xrightarrow{g^\bullet} \Phi(P^\bullet)\rightarrow B^\bullet\rightarrow0$, we get that $Q^\bullet\cong \Phi(P^\bullet)$ in $D_{sg}(\bar{\Lambda })$, and then $\widetilde{\Phi}$ is dense.

Therefore, $\widetilde{\Phi}$ is a triangulated equivalence.
\end{proof}

In the following, we prove that $D_{sg}(\bar{\Lambda })$ and $D_{sg}(\Lambda )$ are triangulated equivalent.
Note that $\Lambda $ is a subalgebra of $\bar{\Lambda }$, and then $\bar{\Lambda }$ can be viewed as a left (also right) $\Lambda $-module. In particular, $\bar{\Lambda }$ is projective as a left (also right) $\Lambda $-module.
Then there is an adjoint triple $(j_*,j^*,j_!)$,
where $j_*=\bar{\Lambda }\otimes_\Lambda -:\mod \Lambda \rightarrow \mod \bar{\Lambda }$, $j^*=\,_\Lambda \bar{\Lambda }\otimes _{\bar{\Lambda }}-:\mod \bar{\Lambda }\rightarrow \mod \Lambda $, and $j_!=\Hom_\Lambda (_\Lambda \bar{\Lambda },-):\mod \bar{\Lambda }\rightarrow \mod\Lambda$. All of these three functors are exact, and they induce triangulated functors on bounded derived categories, which are denoted by the same notations respectively. Note that $j^*$ is the restriction functor.

\begin{lemma}\label{lemma singularity equivalence of S and barS}
Keep the notations as above. Assume that there is only one pair of vertices $(x,E(x))$ such that $x\neq E(x)$, and the added arrow for $\bar{Q}$ is $\gamma=\alpha_{(x,E(x))}:x\rightarrow E(x)$. Then $\Lambda$ and $\bar{\Lambda }$ are singular equivalent.
\end{lemma}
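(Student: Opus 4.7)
The plan is to show that the adjoint pair $(j_*, j^*)$ descends to a pair of quasi-inverse triangle equivalences between $D_{sg}(S)$ and $D_{sg}(\bar{S})$, whose unit and counit become isomorphisms in the respective singularity categories.

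The starting point is a bimodule decomposition of $\bar{S}$. Lemma \ref{lemma finite dimension of gluing algebras}, specialised to the single pair $(x,E(x))$, ensures that there are no nonzero paths $x\to E(x)$ or $E(x)\to x$ in $S$; consequently any nonzero path in $\bar{S}$ uses the added arrow $\gamma$ at most once, and $\bar{S}\cong S\oplus S\gamma S$ as an $S$-bimodule, where $S\gamma S\cong Se_{E(x)}\otimes_k e_xS$ is free both as a left and as a right $S$-module. The same vanishing of paths between $x$ and $E(x)$ in $S$ yields $\gamma\cdot S\gamma S=0=S\gamma S\cdot\gamma$ inside $\bar{S}$, so the $\bar{S}$-action on $S\gamma S$ factors through the quotient $\bar{S}\twoheadrightarrow S$. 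Using the decomposition I get $j^*(\bar{S}e_i)\cong Se_i\oplus Se_{E(x)}^{n_i}$ with $n_i:=\dim_k e_xSe_i$, which is projective over $S$, and $j_*(Se_j)=\bar{S}e_j$ is clearly projective over $\bar{S}$. Thus both functors carry $K^b(\proj)$ into $K^b(\proj)$ and induce exact functors $\widetilde{j_*}\colon D_{sg}(S)\to D_{sg}(\bar{S})$ and $\widetilde{j^*}\colon D_{sg}(\bar{S})\to D_{sg}(S)$.

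I then check that the unit and counit become isomorphisms after passing to the singularity categories. For $N\in\mod S$ the decomposition gives $j^*j_*N\cong N\oplus(Se_{E(x)}\otimes_k e_xN)$ with the unit $\eta_N$ the canonical inclusion into the first summand; the cokernel is a projective $S$-module, so $\eta_N$ is an isomorphism in $D_{sg}(S)$. For $M\in\mod\bar{S}$, tensoring $0\to S\gamma S\to\bar{S}\to S\to 0$ with $M$ over $S$ yields a short exact sequence $0\to S\gamma S\otimes_S M\to j_*j^*M\xrightarrow{\epsilon_M} M\to 0$ in $\mod\bar{S}$; as an $S$-module the kernel is $Se_{E(x)}\otimes_k e_xM$, and since $\gamma\cdot S\gamma S=0$ the $\bar{S}$-action on it factors through $\bar{S}\twoheadrightarrow S$. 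To upgrade this to $\bar{S}$-projectivity I will use the complementary consequence of Lemma \ref{lemma finite dimension of gluing algebras}: since there is no nonzero path $E(x)\to x$ in $S$, no nonzero path starting at $E(x)$ in $\bar{Q}$ can involve $\gamma$, hence $\bar{S}e_{E(x)}=Se_{E(x)}$. Therefore the kernel of $\epsilon_M$ is a direct sum of $\dim_k e_xM$ copies of the $\bar{S}$-projective $\bar{S}e_{E(x)}$, and $\epsilon_M$ is an isomorphism in $D_{sg}(\bar{S})$. It then follows that $\widetilde{j_*}$ and $\widetilde{j^*}$ are quasi-inverse triangle equivalences.

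The main obstacle is precisely the projectivity of $\ker\epsilon_M$ in $\mod\bar{S}$: the kernel is visibly projective as an $S$-module, but a priori only an $\bar{S}$-module on which $\gamma$ acts trivially, and such modules need not be $\bar{S}$-projective in general. The rescue is the exceptional behaviour of the single projective $\bar{S}e_{E(x)}$, whose coincidence with $Se_{E(x)}$ is precisely what the finite-dimensionality of $S_E$ buys us.
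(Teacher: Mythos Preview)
Your approach uses the same adjoint pair $(j_*,j^*)$ as the paper. The paper's own argument only verifies that the unit $\eta_N\colon N\to j^*j_*N$ has projective cokernel (hence $\tilde j_*$ is fully faithful) and stops there; you go further and analyse the counit as well, which is what one needs for an actual equivalence. Your bimodule decomposition $\bar S\cong S\oplus S\gamma S$ and your treatment of the unit are correct.

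The gap is in the counit step. Tensoring the bimodule sequence $0\to S\gamma S\to\bar S\to S\to 0$ with $M$ over $S$ does produce a surjection $\bar S\otimes_S M\to S\otimes_S M$, but the target is $M$ with the $\bar S$-action \emph{inflated through} $\bar S\twoheadrightarrow S$ (so $\gamma$ acts as $0$), not $M$ with its given $\bar S$-structure; concretely $\gamma\otimes m\mapsto 0$ under this map, whereas $\epsilon_M(\gamma\otimes m)=\gamma m$, which need not vanish. Hence the tensored sequence does not identify $\ker\epsilon_M$ with $S\gamma S\otimes_S M$ as $\bar S$-modules, and the claim that $\gamma$ acts trivially on $\ker\epsilon_M$ is not yet justified. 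The repair is short and uses exactly your key observation $\bar S e_{E(x)}=Se_{E(x)}$: choose a basis $v_1,\dots,v_d$ of $e_xM$ and send the $j$-th generator of $(\bar S e_{E(x)})^{d}$ to $\gamma\otimes v_j-1\otimes\gamma v_j\in\ker\epsilon_M$. This is $\bar S$-linear; because every $a\in\bar S e_{E(x)}$ already lies in $S$, one has $a\otimes\gamma v_j=1\otimes a\gamma v_j$ and $a\gamma\otimes v_j\in S\gamma S\otimes_S M$, from which injectivity follows, and a dimension count (using $\dim\bar S e_{E(x)}=\dim Se_{E(x)}$) shows it is an isomorphism. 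Thus $\ker\epsilon_M\cong(\bar S e_{E(x)})^{\dim_k e_xM}$ is $\bar S$-projective, $\epsilon_M$ becomes an isomorphism in $D_{sg}(\bar S)$, and the rest of your argument goes through.
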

\begin{proof}
Because $\bar{\Lambda }$ is projective as a left (also right) $\Lambda $-module, both of $j_*$ and $j^*$ preserve projective modules. Since $j_*$ and $j^*$ are exact, they induce triangulated functors on singularity categories, which are denoted by $\tilde{j}_*$ and $\tilde{j}^*$ respectively.
We prove that $\tilde{j}_*:D_{sg}(\Lambda)\rightarrow D_{sg}(\bar{\Lambda})$ is a triangulated equivalence in the following.

Let $S_i$ be the simple $\Lambda$ (resp. $\bar{\Lambda}$)-modules for $i\in Q_0$. Then
\begin{equation}\label{eqn:action of j on simple modules}
j_*(S_i)=\left\{ \begin{array}{cc} S_i &  \text{ if }i\neq x,\\
C_x& \text{ if }i=x. \end{array}\right.
\end{equation}
where $C_x$ is uniquely determined by the short exact sequence
\begin{equation}\label{eqn:short exact sequnce of Sx}
0\rightarrow \bar{\Lambda}e_{E(x)} \rightarrow C_x \xrightarrow{g} S_x\rightarrow0
\end{equation}
with the first morphism induced by the right multiplication of $\gamma$.
In particular, $j^*j_*(S_i)=S_i$ if $i\neq x$, and $j^*j_*(S_x)=S_x\oplus \Lambda e_{E(x)}$.

By induction on the length of modules, one can get that the adjunction $\Id\rightarrow j^*j_*$ induces that $M$ is a direct summand of $j^*j_*(M)$, and $j^*j_*(M)\cong M\oplus (\Lambda  e_{E(x)})^{\oplus \dim M_x}$
for any $\Lambda $-module $M=(M_i,\phi_\alpha)_{i\in Q_0,\alpha\in Q_1}$. Furthermore, for any $f:M\rightarrow N$ in $\mod\Lambda$,
$j^*j_*(f)$ is of the form
$$\left( \begin{array}{cc} f & 0\\ 0& g\end{array} \right): M\oplus (\Lambda  e_{E(x)})^{\oplus \dim M_x}\rightarrow N\oplus (\Lambda  e_{E(x)})^{\oplus \dim N_x}$$
where $g: (\Lambda  e_{E(x)})^{\oplus \dim M_x}\rightarrow (\Lambda  e_{E(x)})^{\oplus \dim N_x}$ can be represented by a $(\dim N_x)\times (\dim M_x)$-matrix over $K$.
In particular, we have $\tilde{j}^* \tilde{j}_* \simeq \Id_{D_{sg}(\Lambda )}$ since $\mod \Lambda$ generates $D_{sg}(\mod\Lambda)$. Therefore, $\tilde{j}_*$ is fully faithful.

On the other hand, from the above, we have $S_i\in \Im(\tilde{j}_*)$ for any $i\neq x$. Together with $S_x\cong C_x$ in $D_{sg}(\bar{\Lambda})$, then $S_i\in\Im(\tilde{j}_*)$ for any $i\in Q_0$. Thus, $\tilde{j}_*$ is dense.
Therefore, $\tilde{j}_*:D_{sg}(\Lambda)\rightarrow D_{sg}(\bar{\Lambda})$ is a triangulated equivalence.
\end{proof}

Now we get the following result.
\begin{theorem}\label{theorem singularity equivalence of gluing vertices}
Let $\Lambda =KQ/I$ be a finite-dimensional algebra, and $E$ an involution on the set of vertices of $Q$. Assume that the algebra $\Lambda_E$ constructed from $\Lambda $ by gluing the vertices along $E$ is finite-dimensional. Then $\Lambda $ and $\Lambda_E$ are singularity equivalent.
\end{theorem}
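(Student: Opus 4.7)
The plan is to reduce the statement to the single-pair case, which has already been handled by Proposition \ref{proposition singularity equivalence of SE barS} together with Lemma \ref{lemma singularity equivalence of S and barS}, and then to iterate. I would begin by enumerating the pairs $(x_1,E(x_1)),\dots,(x_n,E(x_n))$ of vertices with $x_i\neq E(x_i)$, and defining the intermediate involutions $E_0=\mathrm{id},E_1,\dots,E_n=E$ exactly as in the proof of Lemma \ref{lemma finite dimension of gluing algebras}, i.e.\ $E_j(x_i)=E(x_i)$ for $1\leq i\leq j$ and $E_j(x)=x$ otherwise. This produces a chain of algebras $S=S_{E_0},S_{E_1},\dots,S_{E_n}=S_E$, and the inductive argument inside the proof of Lemma \ref{lemma finite dimension of gluing algebras} guarantees that every $S_{E_j}$ in this chain is finite-dimensional under our standing hypothesis.

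Next, for each fixed $1\leq j\leq n$, I would observe that $S_{E_j}$ is obtained from $S_{E_{j-1}}$ by gluing \emph{exactly one} further pair of vertices, namely the images of $x_j$ and $E(x_j)$ in the quiver of $S_{E_{j-1}}$; denote the corresponding involution on that vertex set by $E'$. Then, applied to the pair $(S_{E_{j-1}},E')$, Proposition \ref{proposition singularity equivalence of SE barS} yields a triangle equivalence
\[
D_{sg}(\mod S_{E_j})\;=\;D_{sg}(\mod (S_{E_{j-1}})_{E'})\;\simeq\;D_{sg}(\mod \overline{S_{E_{j-1}}}),
\]
while Lemma \ref{lemma singularity equivalence of S and barS}, applied to the same pair, gives a triangle equivalence
\[
D_{sg}(\mod S_{E_{j-1}})\;\simeq\;D_{sg}(\mod \overline{S_{E_{j-1}}}).
\]
Composing these two equivalences produces a triangle equivalence $D_{sg}(\mod S_{E_{j-1}})\simeq D_{sg}(\mod S_{E_j})$.

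Finally, chaining these $n$ equivalences for $j=1,2,\dots,n$ yields the desired triangle equivalence $D_{sg}(\mod S)\simeq D_{sg}(\mod S_E)$. I do not expect any serious obstacle beyond bookkeeping: the only non-routine point is to be sure that at each inductive stage the algebra $S_{E_{j-1}}$ actually satisfies the single-pair hypothesis of Proposition \ref{proposition singularity equivalence of SE barS} and Lemma \ref{lemma singularity equivalence of S and barS}, which requires it to be finite-dimensional; this is precisely what is guaranteed by the step-by-step construction used in the proof of Lemma \ref{lemma finite dimension of gluing algebras}, and is the reason that lemma was formulated as an induction in the first place.
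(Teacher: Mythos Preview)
Your proposal is correct and follows essentially the same approach as the paper: both reduce to the single-pair case via the chain $S=S_{E_0},S_{E_1},\dots,S_{E_n}=S_E$ of intermediate gluings and then apply Proposition~\ref{proposition singularity equivalence of SE barS} together with Lemma~\ref{lemma singularity equivalence of S and barS} at each step. You are in fact slightly more explicit than the paper in invoking the proof of Lemma~\ref{lemma finite dimension of gluing algebras} to justify finite-dimensionality of each $S_{E_j}$, which the paper uses tacitly.
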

\begin{proof}
Similarly to the same induction as in the proof of Lemma \ref{lemma finite dimension of gluing algebras}, it is enough to prove the result for the case that there exists only one pair of vertices $(x,E(x))$ such that $x\neq E(x)$, which follows by combining Proposition \ref{proposition singularity equivalence of SE barS} and Lemma \ref{lemma singularity equivalence of S and barS}.
\end{proof}

At the end of this subsection, we consider the Gorenstein property of $\Lambda$ and $\Lambda_E$.
\begin{lemma}\label{lemma Gorenstein of S and barS}
Keep the notations as above. Assume that there is only one pair of vertices $(x,E(x))$ such that $x\neq E(x)$, and the added arrow for $\bar{Q}$ is $\gamma=\alpha_{(x,E(x))}:x\rightarrow E(x)$.
Then $\Lambda $ is Gorenstein if and only if $\bar{\Lambda }$ is Gorenstein.
\end{lemma}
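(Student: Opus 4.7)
My plan exploits the fact that $\bar{S}$ is a trivial extension of $S$ with a particularly tractable kernel. By Lemma~\ref{lemma finite dimension of gluing algebras}, applied in both directions (using that $E$ is an involution), there is no nonzero path in $S$ between $x$ and $E(x)$ in either direction, so $e_x S e_{E(x)} = 0 = e_{E(x)} S e_x$. Consequently $\gamma\bar{S}\gamma = 0$, the two-sided ideal $J = \bar{S}\gamma\bar{S}$ satisfies $J^2 = 0$, and a unique-factorisation argument on paths yields $J \cong Se_{E(x)}\otimes_k e_xS$ as $(S,S)$-bimodule. Thus $\bar{S} = S\oplus J$ as $S$-bimodules, i.e.\ $\bar{S} = S\ltimes J$ is a trivial extension; moreover ${}_SJ\cong (Se_{E(x)})^{\oplus \dim e_xS}$ and $J_S\cong (e_xS)^{\oplus \dim Se_{E(x)}}$ are projective, so ${}_S\bar{S}$ and $\bar{S}_S$ are projective $S$-modules. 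Hence the three functors in the adjoint triple $(\iota_*,\iota^*,\iota_!)=(\bar{S}\otimes_S-,\ \mbox{restriction},\ \Hom_S({}_S\bar{S},-))$ are all exact, and $\iota^*$ preserves injectives and projectives.

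For the direction ``$\bar{S}$ Gorenstein $\Rightarrow$ $S$ Gorenstein'', take an injective coresolution of ${}_{\bar{S}}\bar{S}$ of length $\id{}_{\bar{S}}\bar{S}$ and apply $\iota^*$. Exactness and preservation of injectives produce an injective coresolution of $\iota^*\bar{S}\cong S\oplus (Se_{E(x)})^{\oplus N}$ in $\mod S$ of the same length; since ${}_SS$ is a direct summand, $\id{}_SS<\infty$. The right-sided analogue is identical, so $S$ is Gorenstein.

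For the converse, assume ${}_SS$ has injective dimension $\leq d$. Let $\pi:\bar{S}\to \bar{S}/J=S$ and write $\pi^*:\mod S\to\mod\bar{S}$ for the corresponding inflation functor. Applying $\Hom_S({}_S-,{}_SM)$ to the short exact sequence $0\to {}_SJ_{\bar{S}}\to {}_S\bar{S}_{\bar{S}}\to {}_SS_{\bar{S}}\to 0$ of $(S,\bar{S})$-bimodules (exactness of $\Hom$ uses that ${}_SS$ is projective) yields in $\mod\bar{S}$ a short exact sequence
\[ 0\to \pi^*M\to \iota_!M\to \pi^*N\to 0, \]
where, using ${}_SJ\cong Se_{E(x)}\otimes_k e_xS$, a direct calculation gives $N=\Hom_S({}_SJ,M)\cong D(e_xS)\otimes_k e_{E(x)}M$ as left $S$-module. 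Thus $N$ is a direct sum of copies of the indecomposable injective $D(e_xS)$. Feeding such an $N$ back into the sequence, the next cokernel involves $e_{E(x)}D(e_xS)=D(e_xSe_{E(x)})=0$, so it vanishes and $\pi^*N\cong \iota_!N$ is injective in $\mod\bar{S}$. Therefore $\id_{\bar{S}}\pi^*M\leq 1$ whenever $M$ is injective in $\mod S$, and splicing along an injective coresolution gives $\id_{\bar{S}}\pi^*M\leq \id_S M+1$ for every finitely generated left $S$-module $M$.

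Applying this to $M={}_SS$ and $M={}_SJ$ (projective, so $\id_S({}_SJ)\leq d$), and noting $J\cong \pi^*({}_SJ)$ as $\bar{S}$-module since $J^2=0$, we obtain $\id_{\bar{S}}\pi^*S\leq d+1$ and $\id_{\bar{S}}J\leq d+1$. The exact sequence $0\to J\to \bar{S}\to \pi^*S\to 0$ in $\mod\bar{S}$ then gives $\id{}_{\bar{S}}\bar{S}\leq d+1$; the right-sided argument is analogous, so $\bar{S}$ is Gorenstein. The delicate step is the bound on $\id_{\bar{S}}\pi^*M$: its termination rests entirely on the vanishing $e_{E(x)}D(e_xS)=0$, which is exactly the nonexistence of paths from $E(x)$ to $x$ in $S$ supplied by Lemma~\ref{lemma finite dimension of gluing algebras}.
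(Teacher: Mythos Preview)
Your argument is correct, and the underlying mechanics are very close to the paper's own proof: both exploit the adjoint triple $(\iota_*,\iota^*,\iota_!)$ together with the vanishing $e_xSe_{E(x)}=0=e_{E(x)}Se_x$ coming from Lemma~\ref{lemma finite dimension of gluing algebras}. The direction ``$\bar S$ Gorenstein $\Rightarrow S$ Gorenstein'' is handled identically in both, via restriction.

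The difference lies in the packaging of the converse. You first recognise $\bar S$ as the trivial extension $S\ltimes J$ with $J\cong Se_{E(x)}\otimes_k e_xS$, and then derive the short exact sequence $0\to\pi^*M\to\iota_!M\to\pi^*N\to0$ from the bimodule sequence $0\to J\to\bar S\to S\to0$; your two–step termination ($e_{E(x)}D(e_xS)=0$) then yields $\id_{\bar S}\pi^*M\le 1$ for injective $M$, and you finish with $0\to J\to\bar S\to\pi^*S\to0$. The paper instead writes down a single four–term exact sequence
\[
0\to(\bar Se_{E(x)})^{\dim M_x}\to j_*M\xrightarrow{f_M} j_!M\to(D(\bar S)e_x)^{\dim M_{E(x)}}\to 0
\]
with $\Im f_M=i_*M=\pi^*M$, and argues directly with the identifications $\bar Se_{E(x)}\cong i_*(Se_{E(x)})$ and $D(\bar S)e_x\cong i_*(D(S)e_x)$. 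Your sequence is exactly the right half of theirs, and your vanishing $e_{E(x)}D(e_xS)=0$ is equivalent to their identification $D(\bar S)e_x\cong \pi^*D(e_xS)$. What your trivial–extension viewpoint buys is a cleaner explanation of \emph{why} the iteration terminates after one step (namely $J^2=0$), and a uniform bound $\id_{\bar S}\pi^*M\le\id_SM+1$; the paper's version is slightly more hands–on but treats each indecomposable projective/injective separately.
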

\begin{proof}
Note that $\Lambda $ is a quotient algebra of $\bar{\Lambda }$, and then there is a fully embedding $i_*:\mod \Lambda \rightarrow \mod \bar{\Lambda }$.
We use the exact functors $i_*,j_*,j^*,j_!$ to prove this result.

For the ``if'' part, as $\bar{\Lambda }$ is Gorenstein, for any indecomposable injective $\bar{\Lambda }$-module $D(\bar{\Lambda })e_i$,  we  take
a minimal projective resolution of $D(\bar{\Lambda })e_i$:
\begin{equation}\label{eqn:projective resolution of injective module}
0\rightarrow P^m \rightarrow P^{m-1}\rightarrow\cdots\rightarrow P^0\rightarrow D(\bar{\Lambda })e_i\rightarrow0.
\end{equation}
By applying $j^*$ to (\ref{eqn:projective resolution of injective module}), we have the following exact sequence:
 $$0\rightarrow j^*(P^m) \rightarrow j^*(P^{m-1})\rightarrow\cdots\rightarrow j^*(P^0)\rightarrow j^*(D(\bar{\Lambda })e_i) \rightarrow0.$$
Then $j^*(D(\bar{\Lambda })e_i)= j^*j_!(D(\Lambda)e_i) = D(\Lambda )e_i\oplus (D(\Lambda )e_{x})^{\oplus s_i}$.
Because $j^*$ preserves projective modules,  we obtain $\pd _\Lambda  (D(\Lambda )e_i)<\infty$.
Similarly, one can check that $\id_\Lambda  (\Lambda e_i)<\infty$ by noting that $j^*$ also preserves injective modules, and $j^*(\bar{\Lambda }e_i)\cong \Lambda e_i\oplus (\Lambda e_{E(x)})^{\oplus r_i}$ for some $r_i$.
Therefore, $\Lambda $ is Gorenstein.

For the ``only if '' part, obviously, $j^*i_*\simeq \Id$. For any $M\in \mod\Lambda$, the adjunction yields that $\alpha:\Hom_{\Lambda}(j_*(M), i_*(M))\xrightarrow{\sim}\Hom(M,j^*i_*(M))=\Hom(M,M)$.
Let $f_M:j_*(M)\rightarrow i_*(M)$ be $\alpha^{-1}(\Id_M)$. It follows from (\ref{eqn:action of j on simple modules}) that $f_{S_i}$ is an automorphism of $S_i$ if $i\neq x$; and
$f_{S_x}$ equals to $g$ in (\ref{eqn:short exact sequnce of Sx}).
By induction on the length of modules, using (\ref{eqn:short exact sequnce of Sx}), there exists a short exact sequence:
\begin{equation}\label{equation kan extension}
0\rightarrow (\bar{\Lambda }e_{E(x)})^{\dim M_{x}} \rightarrow j_*(M) \xrightarrow{f_M}i_*(M)\rightarrow0.
\end{equation}
Dually, there is an exact sequence
\begin{equation}\label{equation kan extension 2}
0\rightarrow i_*(M) \xrightarrow{g_M}j_!(M)\rightarrow (D(\bar{\Lambda})e_x)^{\dim M_{E(x)}}\rightarrow0.
\end{equation}

For any projective $\Lambda $-module $U$, it follows from (\ref{equation kan extension}) that $\pd_{\bar{\Lambda }}(i_*(U))\leq 1$ since $j_*(U)$ is projective.
Similarly, for any injective $\Lambda $-module $V$, we have $\id_{\bar{\Lambda }}(i_*(V))\leq 1$.

Because $\Lambda_E$ is finite-dimensional, there is no nonzero paths from $x$ to $E(x)$, and no nonzero paths from $E(x)$ to $x$ in $\Lambda $. It follows from  \cite[Chapter III.2, Lemma 2.4, Lemma 2.6]{ASS}  that $\bar{\Lambda }e_{E(x)}\cong i_*(\Lambda e_{E(x)})$ and $D(\bar{\Lambda }e_x)\cong i_*(D(\Lambda )e_x)$. Since $\Lambda $ is Gorenstein, we have $\pd_\Lambda (D(\Lambda )e_x)<\infty$. By applying $i_*$ to the minimal projective resolution of $D(\Lambda )e_x$, it is easy to see that
$\pd_{\bar{\Lambda}}D(\bar{\Lambda })e_x= \pd_{\bar{\Lambda }} i_*(D(\Lambda )e_x)<\infty$ since $i_*$ is exact and $\pd_{\bar{\Lambda }}(i_*(U))\leq 1$ for any projective $\Lambda $-module $U$. Similarly, we can prove that
$\id_{\bar{\Lambda }}\bar{\Lambda }e_x<\infty$.

Note that $D(\bar{\Lambda })e_i=j_!(D(\Lambda )e_i)$ for any indecomposable injective $\bar{\Lambda }$-module $D(\bar{\Lambda })e_i$. Because $\Lambda $ is Gorenstein, $\pd_\Lambda  (D(\Lambda )e_i)<\infty$. By applying $j_*$ to the minimal projective resolution of $D(\Lambda )e_i$, we have $\pd_{\bar{\Lambda }} (j_*(D(\Lambda )e_i))<\infty$. So $\pd_{\bar{\Lambda }} (i_*(D(\Lambda )e_i))<\infty$ by (\ref{equation kan extension}). It follows from (\ref{equation kan extension 2}) that
$\pd_{\bar{\Lambda }} D(\bar{\Lambda })e_i=\pd_{\bar{\Lambda }}j_!(D(\Lambda )e_i)<\infty$ since $\pd_{\bar{\Lambda }} D(\bar{\Lambda })e_x<\infty$.

Dually, for any indecomposable projective $\bar{\Lambda }$-module $\bar{\Lambda }e_i$, we can get that $\id_{\bar{\Lambda }} \bar{\Lambda }e_i<\infty$. Therefore, $\bar{\Lambda }$ is Gorenstein.
\end{proof}

Now, we can prove that Gorenstein algebras are closed under taking Br\"{u}stle's gluing.

\begin{proposition}\label{proposition of gorenstein properties of gluiing algebras}
Let $\Lambda =KQ/I$ be a finite-dimensional algebra, and $E$ an involution on the set of vertices of $Q$. Assume that the algebra $\Lambda_E$ constructed from $\Lambda $ by gluing the vertices along $E$ is finite-dimensional. Then $\Lambda_E$ is Gorenstein if and only if $\Lambda $ is Gorenstein.
\end{proposition}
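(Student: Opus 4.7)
The plan is to reduce to a single swapped pair and then to combine Lemma~\ref{lemma Gorenstein of S and barS} with the functor $\Phi$ from the preceding subsection.

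Following the inductive scheme used in the proof of Theorem~\ref{theorem singularity equivalence of gluing vertices}, I enumerate the non-fixed pairs $(x_1,E(x_1)),\dots,(x_n,E(x_n))$ of $E$ and let $E_j$ be the involution that swaps only the first $j$ pairs, producing a chain
$$S=S_{E_0},\ S_{E_1},\ \dots,\ S_{E_n}=S_E$$
in which each $S_{E_{j+1}}$ is the single-pair Br\"ustle gluing of $S_{E_j}$, and each intermediate algebra is finite-dimensional (this follows by the same telescoping argument used in the proof of Lemma~\ref{lemma finite dimension of gluing algebras}). Hence it suffices to treat the case where $E$ swaps a single pair $(x,E(x))$ with $x\neq E(x)$. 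In this case, Lemma~\ref{lemma Gorenstein of S and barS} already gives $S$ Gorenstein $\Longleftrightarrow$ $\bar S$ Gorenstein, so the remaining task is to establish $\bar S$ Gorenstein $\Longleftrightarrow$ $S_E$ Gorenstein.

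For the latter equivalence I would exploit the exact fully faithful embedding $\Phi\colon\mod S_E\to\mod\bar S$, the short exact sequences of Lemma~\ref{lemma exact sequence for Phi}, and the singularity equivalence $\widetilde\Phi\colon D_{sg}(S_E)\simeq D_{sg}(\bar S)$ of Proposition~\ref{proposition singularity equivalence of SE barS}. For the direction $S_E$ Gor $\Rightarrow$ $\bar S$ Gor, start from a finite injective coresolution of each indecomposable projective $S_E e_{\bar i}$ (which exists because $\id_{S_E}S_E<\infty$), apply $\Phi$, and then splice in the resolutions
$$0\to (\bar S e_{E(x)})^{\oplus t_i}\to \bar S e_i\oplus (\bar S e_x)^{\oplus t_i}\to\Phi(S_Ee_{\bar i})\to 0$$
and $0\to\bar S e_i\to \Phi(S_Ee_{\bar i})\to B_x^{\oplus t_i}\to 0$ to deduce $\id_{\bar S}\bar S e_i<\infty$ for every $i$; a symmetric argument on the right gives $\id (\bar S)_{\bar S}<\infty$, so $\bar S$ is Gorenstein. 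For the converse $\bar S$ Gor $\Rightarrow$ $S_E$ Gor, observe that $\pd_{\bar S}D(\bar S)<\infty$ and $\pd_{\bar S}B_x<\infty$ (the latter is immediate from the defining exact sequence $0\to\bar S e_{E(x)}\to\bar S e_x\to B_x\to 0$); combining these with the image of a finite injective coresolution under $\widetilde\Phi^{-1}$ and Buchweitz's Theorem~\ref{theorem Buchweitz theorem} (applied on the $S_E$ side once finiteness is pulled back along the singularity equivalence) yields $\pd_{S_E}D(S_E)<\infty$, hence $S_E$ is Gorenstein.

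The main obstacle I anticipate is the explicit comparison between $\Phi(D(e_{\bar i}S_E))$ and the indecomposable injective $\bar S$-modules $D(e_i\bar S)$, since the essential image of $\Phi$ consists of $\bar S$-modules on which the new arrow $\gamma$ acts as an isomorphism — a condition generally failed by the $D(e_i\bar S)$. Overcoming this will require writing $\Phi(D(e_{\bar i}S_E))$ as an extension built from suitable $D(e_j\bar S)$ together with correction terms supported on $\{x,E(x)\}$, dual to the projective-cover analysis of Lemma~\ref{lemma exact sequence for Phi}, and verifying that these correction terms have finite projective (resp.\ injective) dimension using the brick property of $B_x$ established in Lemma~\ref{lemma homological property of Bx}. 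This bookkeeping on both the left and right module sides is the homological heart of the argument.
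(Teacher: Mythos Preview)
Your reduction to a single swapped pair and your use of Lemma~\ref{lemma Gorenstein of S and barS} for $S\Leftrightarrow\bar S$ match the paper exactly. For the implication $\bar S$ Gorenstein $\Rightarrow$ $S_E$ Gorenstein, your idea is also essentially the paper's: once you have the dual of~(\ref{equation finite projective dimension of Phi 1}) (which the paper states as $0\to\Phi(D(S_E)e_{\bar i})\to D(\bar S)e_i\oplus (D(\bar S)e_{E(x)})^{\oplus s_i}\to (D(\bar S)e_x)^{\oplus s_i}\to 0$), you get $\id_{\bar S}\Phi(D(S_E)e_{\bar i})\le 1$, hence $\pd_{\bar S}\Phi(D(S_E)e_{\bar i})<\infty$, and then the key identity $\Phi(K^b(\proj S_E))=K^b(\proj\bar S)\cap\Phi(D^b(\mod S_E))$ from the proof of Proposition~\ref{proposition singularity equivalence of SE barS} gives $\pd_{S_E}D(S_E)e_{\bar i}<\infty$. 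Your phrasing via $\widetilde\Phi^{-1}$ amounts to the same thing, but the appeal to Buchweitz is superfluous and risks circularity, since Theorem~\ref{theorem Buchweitz theorem} on the $S_E$ side requires $S_E$ Gorenstein, which is what you are proving.

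The genuine divergence is in the direction $S_E$ Gorenstein $\Rightarrow$ $S$ Gorenstein. You route this through $\bar S$ and must then control $\id_{\bar S}\bar Se_{E(x)}$ and $\id_{\bar S}B_x$, for which Lemma~\ref{lemma exact sequence for Phi} gives no sequence (it is stated only for $i\neq E(x)$); this is exactly the bookkeeping you flag as the ``main obstacle'', and it is not entirely trivial. The paper sidesteps this completely: it passes directly from $S_E$ to $S$ using the composite exact functor $j^*\Phi\colon\mod S_E\to\mod S$. Because $j^*$ and $\Phi$ are both exact and $j^*\Phi$ preserves projectives \emph{and} injectives, with explicit formulas $j^*\Phi(S_Ee_{\bar i})\cong Se_i\oplus (Se_{E(x)})^{\oplus t_i}$ and $j^*\Phi(D(S_E)e_{\bar i})\cong D(S)e_i\oplus (D(S)e_x)^{\oplus s_i}$ (and the obvious variants at $\bar x$), applying $j^*\Phi$ to finite projective and injective resolutions over $S_E$ immediately yields $\pd_SD(S)e_i<\infty$ and $\id_SSe_i<\infty$ for every vertex, with no dual-sequence analysis and no special treatment of the vertex $E(x)$. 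This is both shorter and cleaner than the route you propose.
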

\begin{proof}
By applying the same induction as in the proof of Lemma \ref{lemma finite dimension of gluing algebras},
we can assume that there is only one pair of vertices $(x,E(x))$ such that $x\neq E(x)$, and the added arrow for $\bar{Q}$ is $\gamma=\alpha_{(x,E(x))}:x\rightarrow E(x)$.

For the ``if'' part, it follows from Lemma \ref{lemma Gorenstein of S and barS} that $\bar{\Lambda }$ is Gorenstein. For any indecomposable injective $\Lambda_E$-module $D(\Lambda_E)e_{\bar{i}}$, $i\neq x$, the dual of (\ref{equation finite projective dimension of Phi 1}) implies that there is a short exact sequence in $\mod\bar{\Lambda }$:
$$ 0\rightarrow \Phi(D(\Lambda_E)e_{\bar{i}})\rightarrow D(\bar{\Lambda })e_i\oplus (D(\bar{\Lambda })e_{E(x)})^{\oplus s_i}\rightarrow (D(\bar{\Lambda })e_{x})^{\oplus s_i}\rightarrow0,$$
for some $s_i$.
So $\id_{\bar{\Lambda }} \Phi(D(\Lambda_E)e_{\bar{i}})\leq1$. Because $\bar{\Lambda }$ is Gorenstein, we deduce that
$$\pd_{\bar{\Lambda }} (\Phi(D(\Lambda_E)e_{\bar{i}}))<\infty,\text{ for any }\bar{i}.$$
From the proof of Proposition \ref{proposition singularity equivalence of SE barS}, we get that $\Phi(K^b(\proj \Lambda_E))= K^b(\proj \bar{\Lambda })\cap \Phi(D^b(\mod \Lambda_E))$,
So $\Phi(D(\Lambda_E)e_{\bar{i}})\in K^b(\proj \bar{\Lambda })\cap \Phi(D^b(\mod \Lambda _E))$, which implies that there is a bounded complex $P^\bullet$ in $K^b(\proj \Lambda _E)$, such that
$\Phi(P^\bullet)\cong \Phi(D(\Lambda_E)e_{\bar{i}})$ in $D^b(\mod \bar{\Lambda })$. Since $\Phi:D^b(\mod \Lambda_E)\rightarrow D^b(\mod \bar{\Lambda })$ is fully faithful, we get that
$D(\Lambda_E)e_{\bar{i}}\cong P^\bullet$ in $D^b(\mod \Lambda_E)$, which implies that $\pd_{\Lambda_E} (D(\Lambda_E)e_{\bar{i}})<\infty$.

By considering the opposite algebras, one can prove that $\id_{\Lambda_E} \Lambda _Ee_{\bar{i}}<\infty$ dually for any $\bar{i}$. Then $\Lambda_E$ is Gorenstein.

For the ``only if'' part, the composition of $j^*$ and $\Phi$ yields an exact functor $j^*\Phi:\mod \Lambda_E\rightarrow \mod \Lambda $.
By the definitions, $j^*\Phi$ preserves projective modules and injective modules.
In particular, $j^*\Phi(\Lambda _Ee_{\bar{i}})=\Lambda e_i\oplus (\Lambda e_{E(x)})^{\oplus t_i}$ for some $t_i$ if $i\neq x$; and $j^*\Phi(\Lambda _Ee_{\bar{x}})=\Lambda e_x\oplus \Lambda e_{E(x)}$.
Dually, $j^*\Phi(D(\Lambda_E)e_{\bar{i}})=D(\Lambda )e_i\oplus(D(\Lambda )e_{x})^{\oplus s_i}$ for some $s_i$ if $i\neq x$; and $j^*\Phi(D(\Lambda_E)e_{\bar{x}})=D(\Lambda )e_x\oplus D(\Lambda )e_{E(x)}$.

Because $\Lambda_E$ is Gorenstein, for any $i\neq E(x)$, it is easy to see that $\id_\Lambda  j^*\Phi(\Lambda _Ee_{\bar{i}})<\infty$, and then $\id_\Lambda  \Lambda e_i<\infty$. For $i=E(x)$, using the fact $j^*\Phi(\Lambda _Ee_{\bar{x}})=\Lambda e_x\oplus \Lambda e_{E(x)}$, we obtain $\id_\Lambda  Se_{E(x)}<\infty$. It is dual to prove that
$$\pd_\Lambda  D(\Lambda )e_i<\infty$$
for any $i\in Q_0$. Therefore, $\Lambda $ is Gorenstein.
\end{proof}

\subsection{Singularity categories of $1$-Gorenstein monomial algebras}
In this subsection, we describe the stable categories of Gorenstein projective modules (i.e., singularity categories) for $1$-Gorenstein monomial algebras.

We always assume that $A=KQ/I$ is a $1$-Gorenstein monomial algebra in this subsection.

Denote by $\cc(A)$ the set of equivalence classes (with respect to cyclic permutation) of \emph{repetition-free} cyclic paths $c=\alpha_n\cdots \alpha_1$ in $Q$ satisfies the following:
\begin{itemize}
\item there is a positive integer $r$ such that any oriented path $\alpha_{i+r-1}\cdots\alpha_i\in\bF$ for any $i\in\Z/n\Z$.
\end{itemize}
In this case, $n$ is called the \emph{length} of $c$, and $r$, which is unique, is called \emph{the length of relations} of $c$. For any $c\in\cc(A)$, let $Q_c$ be the subquiver formed by all arrows appearing in $c$.

Proposition \ref{proposition elements of ideal for 1-Gorenstein monomial algebras} implies that for any perfect path $p$, there is one and only one $c\in\cc(A)$ such that $p\in Q_c$.

The following lemma is obvious; see \cite[Page 1122]{CSZ}.
\begin{lemma}\label{lemma property of oriented cycle}
Let $A=KQ/I$ be a $1$-Gorenstein monomial algebra. Then for any arrow $\alpha$, there is at most one oriented cycle $c\in\cc(A)$ such that $\alpha$ is in $Q_c$.
\end{lemma}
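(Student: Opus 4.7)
The plan is to argue by contradiction. Suppose two distinct cycles $c, c' \in \cc(A)$ share a common arrow; after cyclic rotation we may write $c = \alpha_n \cdots \alpha_1$ and $c' = \beta_m \cdots \beta_1$ with $\alpha_1 = \beta_1$. Let $r$ and $r'$ denote their respective lengths of relations. By the defining property of $\cc(A)$, the length-$r$ path $\alpha_1 \alpha_n \cdots \alpha_{n-r+2}$ (the $r$ consecutive arrows of $c$ whose last arrow is $\alpha_1$) lies in $\bF$, and the analogous statement holds for $c'$. Applying Theorem~\ref{proposition characterize of 1 Gorenstein monomial algebras} with $p = \alpha_1$ and $q = \alpha_n \cdots \alpha_{n-r+2}$ then yields that $\alpha_1$ is a perfect path.

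The key step is to identify the unique right-partner of $\alpha_1$. Since $\alpha_1$ is perfect, the uniqueness remark following Definition~\ref{definition of perfect path} shows that $R(\alpha_1) = \{p_2\}$ is a singleton. Set $q_c := \alpha_n \alpha_{n-1} \cdots \alpha_{n-r+2}$ and $S := \{q \text{ nonzero} : t(q) = s(\alpha_1),\ \alpha_1 q = 0 \text{ in } A\}$; clearly $q_c \in S$. I claim $q_c$ is left-minimal in $S$. Indeed, a nontrivial decomposition $q_c = q' q''$ would take the form $q' = \alpha_n \cdots \alpha_j$ for some $j \geq n - r + 3$, and membership $q' \in S$ would force $\alpha_1 q' = \alpha_1 \alpha_n \cdots \alpha_j$ to vanish in $A$. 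But $\alpha_1 q'$ is a proper subpath of the minimal relation $\alpha_1 \alpha_n \cdots \alpha_{n-r+2} \in \bF$ and is therefore nonzero in $A$, a contradiction. Hence $q_c \in R(\alpha_1)$ and so $q_c = p_2$. The same argument applied to $c'$ yields $\beta_m \cdots \beta_{m-r'+2} = p_2$. Equating these two expressions for $p_2$ forces $r = r'$ and $\alpha_{n-j} = \beta_{m-j}$ for $0 \leq j \leq r-2$.

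An induction closes the argument. The arrow $\alpha_n = \beta_m$ is now common to both cycles, and re-applying the argument above with this arrow in place of $\alpha_1 = \beta_1$ extends the agreement by one position, giving $\alpha_{n-j} = \beta_{m-j}$ for $0 \leq j \leq r-1$. Iterating, $\alpha_{n-j} = \beta_{m-j}$ for every $j \geq 0$. Since the sequence $(\beta_{m-j})_j$ is periodic with period $m$ while the arrows of $c$ are pairwise distinct, we must have $m = n$: otherwise $m < n$ produces the repetition $\alpha_n = \alpha_{n-m}$ in $c$, which is forbidden. Hence $c = c'$, contradicting distinctness. The main obstacle is the left-minimality claim for $q_c$, which rests on the observation that any truncation of $q_c$ on the left yields a proper, and therefore nonzero, subpath of the length-$r$ minimal relation; once this is in hand, the uniqueness of $p_2$ together with the cyclic/repetition-free structure finishes the argument mechanically.
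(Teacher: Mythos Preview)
Your argument is correct, but it takes a longer route than the paper's. The paper argues as follows: since $c_1 \neq c_2$ are repetition-free and share an arrow, one can choose a common arrow $\beta$ at which the two cycles \emph{diverge}, i.e.\ write $c_1=\alpha_n\cdots\alpha_1\beta$ and $c_2=\beta_m\cdots\beta_1\beta$ with $\alpha_1\neq\beta_1$. The length-$r_1$ (resp.\ length-$r_2$) relation along $c_1$ (resp.\ $c_2$) ending in $\beta$ has the form $p_1\alpha_1\beta\in\bF$ (resp.\ $p_2\beta_1\beta\in\bF$); since $A$ is $1$-Gorenstein, Theorem~\ref{proposition characterize of 1 Gorenstein monomial algebras} forces both $(p_1\alpha_1,\beta)$ and $(p_2\beta_1,\beta)$ to be perfect pairs, whence $L(\beta)=\{p_1\alpha_1\}=\{p_2\beta_1\}$, contradicting $\alpha_1\neq\beta_1$.

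So the paper works with $L(\beta)$ at a carefully chosen branching point and obtains an immediate contradiction, whereas you work with $R(\alpha_1)$ at an arbitrary common arrow and must then run an induction plus a periodicity/repetition-freeness argument to force $c=c'$. Your approach has the minor advantage of not needing to locate the divergence point in advance (that reduction is easy but is left implicit in the paper); the paper's approach is considerably shorter once that point is in hand. Both ultimately rest on the same mechanism: a single arrow on a cycle in $\cc(A)$ is a perfect path, and the uniqueness built into perfect pairs pins down the adjacent segment of the cycle.
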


\begin{lemma}\label{lemma direct sum}
Let $A=KQ/I$ be a $1$-Gorenstein monomial algebra. For any two perfect paths $p_1,q_1$, if $p_1,q_1$ are contained in $Q_{c_1}$ and $Q_{c_2}$ respectively for two non-equivalent oriented cycle $c_1,c_2\in \cc(A)$,
then $\Hom_{A}(Ap_1,Aq_1)=0$.
\end{lemma}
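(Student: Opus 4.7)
The plan is to describe every morphism $f\colon Ap_1\to Aq_1$ via the perfect pair at $p_1$, then rule out each nonzero possibility by an overlap-of-arrows argument. Let $(r_1,p_1)$ be the unique perfect pair with $p_1$ as the right-hand term; this exists because $p_1$ is perfect. The short exact sequence \eqref{the syzygy functor of Gorenstein projective modules} applied with $(r_1,p_1)$ in place of $(p,q)$ gives the presentation $0\to Ar_1\to Ae_{t(p_1)}\to Ap_1\to 0$, so $f$ is completely determined by the element $x:=f(p_1)\in e_{t(p_1)}Aq_1$ subject to the single constraint $r_1x=0$ in $A$. Expanding $x=\sum_i c_i\,w_iq_1$ on the monomial basis of $Aq_1$, where each $w_iq_1$ is a distinct nonzero path in $A$ with $t(w_i)=t(p_1)$, the monomial structure forces $r_1w_iq_1=0$ individually for every $i$.

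I would then apply condition (P2) of the perfect pair $(r_1,p_1)$ to each vanishing $r_1w_iq_1=0$, obtaining $w_iq_1=p_1z_i$ for some path $z_i$, and split on the relative lengths of $w_i$ and $p_1$. When $\ell(w_i)<\ell(p_1)$, the path $w_i$ is a proper upper segment of $p_1$: writing $p_1=w_ip_1'$ with $p_1'$ nontrivial yields $q_1=p_1'z_i$, so $p_1'$ is also an upper segment of $q_1$; the arrows of $p_1'$ lie simultaneously on $c_1$ (through $p_1$) and on $c_2$ (through $q_1$), and Lemma~\ref{lemma property of oriented cycle} forces $c_1=c_2$, contradicting the hypothesis. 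The only remaining case is $\ell(w_i)\ge\ell(p_1)$, in which $w_i=p_1w_i'$ for some path $w_i'$, so $x=p_1\cdot y$ with $y:=\sum_i c_i\,w_i'q_1\in e_{s(p_1)}Aq_1$.

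The closing step is to run the symmetric argument at $q_1$: since $q_1$ is perfect along $c_2$, there is a perfect pair $(q_1,s_1)$ with $q_1s_1=0$ and $L(s_1)=\{q_1\}$, and Theorem~\ref{theorem bijection of perfect path and Gorenstein projective modules} identifies $Aq_1$ with the module determined by the relation-cycle of $q_1$. Applying (P3) together with Lemma~\ref{lemma property of oriented cycle} to the factor $y\in Aq_1$ from the previous step, any nonzero summand $w_i'q_1$ would again produce an arrow common to $c_1$ (through the factor $p_1$ prepended on the left) and $c_2$ (through $q_1$), forcing $c_1=c_2$ and contradicting the hypothesis. Consequently every $c_i$ must vanish, giving $x=0$ and hence $f=0$.

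I expect the main obstacle to be the precise execution of this closing symmetric step: the first case yields directly an arrow of $p_1'$ shared by the two cycles, but the second case furnishes only the factorization $x=p_1\cdot y$, and one must argue via (P3) and the structure of relations along $c_2$ from Proposition~\ref{proposition elements of ideal for 1-Gorenstein monomial algebras} that $y$ itself is forced to be zero rather than merely $p_1\cdot y$ landing in a direct summand that the perfect pair $(r_1,p_1)$ kills. This final symmetric overlap argument, combining the left and right perfect-pair data of $p_1$ and $q_1$ at the (possibly shared) vertex $s(p_1)=t(q_1)$, is the technical heart of the proof.
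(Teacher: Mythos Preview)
Your setup and first case are essentially correct and parallel the paper: presenting $Ap_1$ via the perfect pair $(r_1,p_1)$, reducing to individual vanishings $r_1w_iq_1=0$, and applying (P2) (that is, $R(r_1)=\{p_1\}$) to force $w_iq_1=p_1z_i$. When $\ell(w_i)<\ell(p_1)$ your argument cleanly produces a nontrivial common segment $p_1'$ of $p_1$ and $q_1$, and Lemma~\ref{lemma property of oriented cycle} finishes.

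The gap is precisely where you flag it, and it is a real one: in the case $\ell(w_i)\ge\ell(p_1)$ you obtain only the factorisation $w_iq_1=p_1w_i'q_1$, a nonzero path with $p_1$ on the far left, $q_1$ on the far right, and an arbitrary (possibly nontrivial) $w_i'$ in between. No arrow is forced to lie on both cycles from this alone. Your proposed remedy, invoking the perfect pair $(q_1,s_1)$ with $q_1$ on the \emph{left}, cannot help: that pair yields $R(q_1)=\{s_1\}$ and $L(s_1)=\{q_1\}$, information about what $q_1$ kills on the right, whereas what you need is control over which paths kill $q_1$ on the \emph{left}, since your element lives in $Aq_1$.

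The paper closes this case with the perfect pair on the other side of $q_1$, namely $(q_2,q_1)$, giving $L(q_1)=\{q_2\}$ via (P3). For any nontrivial $w_i$ one has $w_iq_1\neq0$ but $(r_1w_i)q_1=0$; applying (P3) to the left-killer $r_1w_i$ forces $r_1w_i=u\,q_2$. Since $w_iq_1\neq0$, the path $q_2$ cannot already be a right segment of $w_i$ (else $w_iq_1$ would contain $q_2q_1=0$), so $q_2=p''w_i$ with $p''$ a nontrivial right segment of $r_1$. Then $p''$ is a subpath of $r_1$ (hence along $c_1$) and of $q_2$ (hence along $c_2$), and Lemma~\ref{lemma property of oriented cycle} gives the contradiction. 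In short: replace $(q_1,s_1)$ by $(q_2,q_1)$ and your argument goes through.
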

\begin{proof}
The proof uses only the definitions of perfect pairs and perfect paths, and morphisms of modules.

Let $(p_2,p_1)$ and $(q_2,q_1)$ be the perfect pairs. For any morphism $f:Ap_1\rightarrow Aq_1$, because the top of every indecomposable Gorenstein projective module is simple, if the induced morphism $\Top(f):\Top(Ap_1)\rightarrow\Top(Aq_1)$ is nonzero, then $f$ is surjective.
In this case, $t(p_1)=t(q_1)$. Then $p_2q_1\in I$ since $p_2p_1\in I$ and $f$ is surjective (see the proof of Theorem \ref{singularity category of 1-Gorenstein algebras}). Because $(p_2,p_1)$ is a perfect pair, there exists a nonzero path $q_1'$ such that $q_1=p_1q_1'$. So $p_1\in Q_{c_2}$.
In conclusion, $p_1\in Q_{c_1},Q_{c_2}$ and $c_1,c_2\in\cc(A)$ are non-equivalent. This gives a contradiction to Lemma \ref{lemma property of oriented cycle}.

So it only happens that $\Im(f)\subseteq\rad (Aq_1)$. Suppose $f\neq0$. Then $f(p_1)\neq0$. We can assume that $f(p_1)=\sum_{k=1}^m a_k u_kq_1$ with each $u_kq_1$ a nonzero path ending at $t(p_1)$ and $a_k\neq0$ for each $k$. Because $p_2p_1\in I$ and $f$ is a morphism of modules, we deduce that $\sum_{k=1}^m a_kp_2u_kq_1\in I$. Then $p_2u_kq_1\in I$ since $A$ is monomial. By considering $u_1$, there exists a path $v_1$ such that $v_1q_2=p_2u_1$ since $(q_2,q_1)$ is a perfect pair.
It follows from $q_2q_1=0$ and $u_1q_1\neq0$ that $l(q_2)> l(u_1)$. Then there exists a nonzero (nontrivial) path $p''$ such that $q_2=p''p'$, $p_2=v_1p''$. So the nontrivial path $p''$
is in $Q_{c_1}$ and $Q_{c_2}$, giving a contradiction to Lemma \ref{lemma property of oriented cycle}.
\end{proof}

Let $A=KQ/I$ be a $1$-Gorenstein monomial algebra. Recall that $Q_c$ is the subquiver of $Q$ for each $c\in\cc(A)$. Restricting $I$ to $KQ_c$, we can define an ideal $I_c$ of $KQ_c$, and then an (finite-dimensional) algebra $\Lambda_c:=KQ_c/I_c$. Note that $\Lambda_c$ is a subring (not subalgebra) of $A$.

\begin{lemma}\label{lem:1-Gorenstein of subalgebra}
Keep the notations as above. Then $\Lambda_c$ is a $1$-Gorenstein monomial algebra.
\end{lemma}
\begin{proof}
It follows from Theorem \ref{proposition characterize of 1 Gorenstein monomial algebras}.
\end{proof}

\begin{lemma}\label{lemma singularity category of 1-Gorenstein 1}
Keep the notations as above. Let $A=KQ/I$ be a $1$-Gorenstein monomial algebra. Then
$$D_{sg}(A)\simeq \coprod_{c\in \cc(A)} D_{sg}(\Lambda _{c}).$$
\end{lemma}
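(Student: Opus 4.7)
The plan is to realize $A$ as the Br\"{u}stle gluing of an ``unglued'' algebra $\tilde{A}$ which decomposes as a product $\bigl(\prod_{c\in\cc(A)} S_c\bigr)\times kQ'$ with $kQ'$ of finite global dimension, and then invoke Theorem \ref{theorem singularity equivalence of gluing vertices}.

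To construct $\tilde{A}$, for each vertex $v\in Q_0$ lying on cycles $c_1,\dots,c_k\in\cc(A)$ (with $k\geq 0$), I would introduce copies $v^{c_1},\dots,v^{c_k}$ and, whenever $v$ is incident to an arrow outside every cycle in $\cc(A)$, an additional copy $v^\circ$. The arrows of $c_i$ at $v$ are attached to $v^{c_i}$, and the remaining arrows at $v$ to $v^\circ$. Lemma \ref{lemma property of oriented cycle} guarantees that every arrow lies on at most one cycle, so this is well-defined. The resulting quiver $\tilde{Q}$ is a disjoint union of the underlying quivers of the $S_c$ (basic cycles) together with a subquiver $Q'$, and $Q'$ is acyclic: any oriented cycle in $Q'$ would pull back to an oriented cycle of $Q$ no arrow of which lies in any $c\in\cc(A)$, but finite-dimensionality of $A$ forces a minimal relation along such a cycle, whose left-hand factor would be a perfect path (Theorem \ref{proposition characterize of 1 Gorenstein monomial algebras}) lying along some $c\in \cc(A)$ (Lemma \ref{lemma property of oriented cycle}), a contradiction.

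I would next argue that every minimal relation $pq\in\bF$ has both $p$ and $q$ lying along a single cycle $c\in\cc(A)$: Theorem \ref{proposition characterize of 1 Gorenstein monomial algebras} (together with its opposite-algebra version) shows both $p$ and $q$ are perfect; minimality of $pq$ in $\bF$ makes $q$ left-minimal in $\{q':pq'=0\}$, so by axiom (P2) of a perfect pair $q$ coincides with the successor $p_2$ of $p$ in its relation-cycle; finally, Proposition \ref{proposition elements of ideal for 1-Gorenstein monomial algebras} places all members of the relation-cycle of $p$ along the same cycle in $\cc(A)$. Consequently, $I$ pulls back to an ideal of $k\tilde{Q}$ supported on the cycle components, yielding $\tilde{A}\cong\bigl(\prod_{c\in\cc(A)} S_c\bigr)\times kQ'$. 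Taking $E$ to be the (iteratively applied pairwise) involution on vertices of $\tilde{Q}$ that reidentifies all copies of each $v\in Q_0$, one checks $\tilde{A}_E=A$ as $k$-algebras, and iterating Theorem \ref{theorem singularity equivalence of gluing vertices} one pair at a time gives $D_{sg}(A)\simeq D_{sg}(\tilde{A})$.

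Finally, acyclicity of $Q'$ gives $\gldim kQ'<\infty$ and hence $D_{sg}(kQ')=0$, while singularity categories commute with finite products of algebras, so
$$D_{sg}(\tilde{A})\simeq \prod_{c\in\cc(A)} D_{sg}(S_c)\simeq \coprod_{c\in\cc(A)} D_{sg}(S_c).$$
The hard part will be the bookkeeping in the ungluing step: checking that $Q'$ is indeed acyclic, that $I$ splits cleanly across the cycle components (so that $\tilde{A}$ is as claimed), and that $A$ is genuinely recovered from $\tilde{A}$ via the iterated Br\"{u}stle construction. The critical input is that every minimal relation of a $1$-Gorenstein monomial algebra is supported along a single cycle in $\cc(A)$, which is distilled from Theorem \ref{proposition characterize of 1 Gorenstein monomial algebras} and Proposition \ref{proposition elements of ideal for 1-Gorenstein monomial algebras}.
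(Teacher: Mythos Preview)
Your approach is correct and genuinely different from the paper's. The paper proves Lemma \ref{lemma singularity category of 1-Gorenstein 1} directly, without using the gluing machinery at all: for each $c\in\cc(A)$ it takes the subalgebra inclusion $\iota_c:S_c\hookrightarrow A$, observes that $A$ is projective as a left and right $S_c$-module (so the induction/restriction functors are exact and preserve projectives), checks that the unit of the adjunction gives $\widetilde{\iota}^*\widetilde{\iota}_*\simeq\Id$, and then uses Lemma \ref{lemma direct sum} for orthogonality of the images and Theorem \ref{theorem bijection of perfect path and Gorenstein projective modules} for density. Your route instead front-loads the gluing theorem: you unglue $A$ to a product $\tilde A\cong\bigl(\prod_c S_c\bigr)\times kQ'$ and invoke Theorem \ref{theorem singularity equivalence of gluing vertices}. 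This is more conceptual and effectively merges Lemma \ref{lemma singularity category of 1-Gorenstein 1} with the gluing step of Lemma \ref{lemma singularity category of 1-Gorenstein 2}, at the cost of relying on the heavier Theorem \ref{theorem singularity equivalence of gluing vertices}; the paper's argument is more elementary here and produces explicit functors.

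Two small points to tighten. First, the parenthetical ``(basic cycles)'' is inaccurate: a cycle $c\in\cc(A)$ may revisit a vertex of $Q$, in which case the $c$-component of $\tilde Q$ is the subquiver of $Q$ supporting $c$, not a basic cycle; this does not affect your argument since you only need that component to be $S_c$. Second, identifying three or more copies $v^{c_1},\dots,v^{c_k},v^\circ$ requires iterating Theorem \ref{theorem singularity equivalence of gluing vertices} with a new involution at each step, and you should note why each intermediate algebra is finite-dimensional: the quiver map to $Q$ is a bijection on arrows, every element of $\bF$ lies along a single cycle (your key observation) and hence already lifts to a relation in the intermediate quiver, so nonzero paths there map to nonzero paths in $A$.
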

\begin{proof}
Let $e_c$ be the idempotent $\sum_{i\in Q_c} e_i$. Then $\Lambda_c$ is a subalgebra of $e_c A e_c$.
%Define $j_\lambda:=A e_c\otimes_{e_cAe_c}-:\mod e_cA e_c\rightarrow \mod A$ and $j_\mu:=e_c A\otimes_A-: \mod A \rightarrow \mod e_cA e_c$. Then $(j_\lambda,j_\mu)$ is an adjoint pair.
%Define $\iota_\lambda=e_cAe_c\otimes_{\Lambda _{c}}-:\mod \Lambda _{c}\rightarrow \mod e_cAe_c$ and $\iota_\mu:\mod e_cAe_c\rightarrow \mod \Lambda _{c}$ the restriction functor.
%Then $(\iota_\lambda,\iota_\mu)$ is an adjoint pair.
%Define $\iota_*:= j_\lambda\circ \iota_\lambda$ and $\iota^*:= \iota_\mu\circ j_\mu$. Then $(\iota_*,\iota^*)$ is an adjoint pair.
%In fact, $\iota^*:\mod A\rightarrow \mod\Lambda_c$ is the restriction functor,
%and $\iota_*= Ae_c\otimes_{\Lambda_c}-: \mod\Lambda_c\rightarrow\mod A$.
Then $Ae_c$ is naturally a right $e_cAe_c$-module, and then a right $\Lambda_c$-module. Denote by $\varphi^c: =Ae_c\otimes_{\Lambda_c}-: \mod\Lambda_c\rightarrow\mod A$. In the following, we prove that $\varphi^c$ is an exact functor, which induces a fully embedding $\tilde{\varphi}^c:\underline{\Gproj} (\Lambda_c)\rightarrow \underline{\Gproj} (A)$.

First, $\varphi^c$ preserves Gorenstein projective modules. In fact, easily, $\varphi^c$ preserves projective modules. For any indecomposable non-projective Goresntein projective $\Lambda_c$-module, it is of the form $\Lambda_cp$ for some perfect path in $\Lambda_c$ by Theorem \ref{theorem bijection of perfect path and Gorenstein projective modules}. Then $\varphi^c(\Lambda_cp)=Ap$. Obviously, $p$ is also a perfect path of $A$. So $\varphi^c$ preserves Gorenstein projective modules.

%Denote by $\iota=\iota_c:\Lambda _{c}\rightarrow A$ the natural embedding. Then $A$ is left (and also right) $\Lambda _{c}$-module. So we get an adjoint pair $(\iota_*,\iota^*)$, where $\iota_*=A\otimes_{\Lambda _{c}}-:\mod \Lambda _{c}\rightarrow \mod A$, and $\iota^*:\mod A\rightarrow \mod \Lambda _{c}$ is the restriction functor. First, we check that $A$ is projective as left (and also right) $\Lambda _{c}$-module.
%By viewing $c$ as a subquiver of $Q$, let $I_c$ be the restriction of $I$ to the subquiver $c$ for any $c\in\cc(A)$. Then $I_c$ is a two sided ideal of $kc$ and $\Lambda _{c}=kc/I_c$.

Second, $\varphi^c$ is an exact functor.
For any arrow $\alpha$, Lemma \ref{lemma property of oriented cycle} shows that there is at most one $c\in\cc(A)$ such that $\alpha\in Q_c$. It follows from Theorem \ref{proposition characterize of 1 Gorenstein monomial algebras} that $I=\langle I_c\mid c\in\cc(A) \rangle$. Fix a $c\in\cc(A)$.
Let $p_1=\alpha_r\cdots \alpha_1$, and $p_2=\beta_s\cdots \beta_1$ with $t(\beta_s)=s(\alpha_1)$ be two nonzero paths in $A$, where $\alpha_i,\beta_j\in Q_1$ for any $i,j$. If
$p_1\in Q_c$ and $\beta_s\notin Q_c$, then it is easy to see that $p_1p_2$ is nonzero in $A$; similarly, if $p_2\in Q_c$, and $\alpha_1\notin Q_c$, then $p_1p_2$ is nonzero in $A$.
As $A$ (resp. $\Lambda_c$) has a basis formed by all nonzero paths in $A$ (resp. $\Lambda_c$), it follows from \cite[Chapter III.2, Lemma 2.4]{ASS} that $e_cA$ is projective as a left $\Lambda _{c}$-module. In fact, for each $i\in Q_c$, $(e_cA)e_i$ is isomorphic to $\Lambda_ce_i\bigoplus ((\Lambda_ce_j)^{\oplus n_j})$, where $n_j$ is the number of nonzero paths $\gamma_m\cdots \gamma_2\gamma_1$ from $i$ to $j$ such that $\gamma_j\in Q_1$ for $1\leq j\leq m$ and $\gamma_m\notin Q_c$.
By considering the dual quiver, similarly, $Ae_c$ is projective as right $\Lambda _{c}$-module. Then $\varphi^c$ is an exact functor, which also preserves projective modules.

It follows that $\varphi^c$ induces an exact functor $\tilde{\varphi}^c:\underline{\Gproj} (\Lambda_c)\rightarrow \underline{\Gproj} (A)$.

Third, $\tilde{\varphi}^c$ is fully faithful. For any two perfect paths $p,q$ in $\Lambda_c$ (and then in $A$), \cite[Lemma 2.3, Lemma 3.6]{CSZ} shows that there exist $K$-linear isomorphisms:
$$\Hom_{\underline{\Gproj} (\Lambda_c)}(\Lambda_cp,\Lambda_cq)\cong \frac{p\Lambda_c\cap \Lambda_cq}{p\Lambda_cq},\text{ and }\Hom_{\underline{\Gproj} (A)}(Ap,Aq)\cong \frac{pA\cap Aq}{pAq}.$$
By viewing $Q_c$ as a subquiver of $Q$, there exists a $K$-linear injective map $\iota_c:(p\Lambda_c\cap \Lambda_cq)\hookrightarrow (pA\cap Aq)$. Easily,
$p\Lambda_cq= pAq\cap \Lambda_c=pAq\cap (p\Lambda_c\cap \Lambda_cq)$, then $\iota_c$ induces a $K$-linear injective map
$$\bar{\iota}: \frac{p\Lambda_c\cap \Lambda_cq}{p\Lambda_cq} \rightarrow \frac{pA\cap Aq}{pAq}.$$
For any nonzero path in $pA\cap Aq$ which is not in $p\Lambda_c\cap \Lambda_cq$, it is of the form $pt_1$ and $t_2q$ for some nonzero path $t_1,t_2\in A$ and $t_1,t_2\notin\Lambda_c$.
So $pt_1=t_2q$ is of the form $ptq\in pAq$ for some nonzero path $t\in A$. As nonzero paths form a basis of $A$, we obtain that $\bar{\iota}$ is epic. So
$\Hom_{\underline{\Gproj} (\Lambda_c)}(\Lambda_cp,\Lambda_cq)\cong \Hom_{\underline{\Gproj} (A)}(Ap,Aq)$.
Then $\tilde{\varphi}^c$ is fully faithful.

Lemma \ref{lemma direct sum} implies that
$$\Hom_{\underline{\Gproj} (A)}(\Im(\tilde{\varphi}^c),\Im(\tilde{\varphi}^{c'}))=0$$
for any nonequivalent $c, c'\in\cc(A)$.
So there is a fully faithful functor
$$F= (\tilde{\varphi}^c)_{c\in\cc(A)} :\coprod_{c\in \cc(A)} \underline{\Gproj}(\Lambda _{c})\rightarrow \underline{\Gproj}(A).$$
Furthermore, Theorem \ref{theorem bijection of perfect path and Gorenstein projective modules} shows that $F$ is dense. The desired result follows from Buchweitz's Theorem.
\end{proof}

\begin{lemma}\label{lemma singularity category of 1-Gorenstein 2}
Keep the notations as above. Let $A=KQ/I$ be a $1$-Gorenstein monomial algebra. Then for any $c\in\cc(A)$,
$$D_{sg}(\Lambda _{c})\simeq D^b(\A_{r-1})/[\tau^{n}],$$
where $n=l(c)$, $r$ is the length of relations for $c$, $D^b(\A_{r-1})$ is the derived category of type $\A_{r-1}$ with $\tau$ the Auslander-Reiten functor, and $D^b(\A_{r-1})/[\tau^{n}]$ denotes the triangulated orbit category in the sense of \cite{Ke}.
\end{lemma}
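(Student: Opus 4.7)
The strategy is to first identify $S_c$ explicitly, then recognize its singularity category as the stable category of a self-injective Nakayama algebra, and finally invoke the classical description of such stable categories as triangulated orbit categories of Dynkin type.

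First, I would identify $S_c$ with a self-injective Nakayama algebra of the form $kZ_n/J^r$. By construction, $S_c$ is the subalgebra of $A = kQ/I$ supported on the subquiver $c$, so its underlying quiver is the basic $n$-cycle $Z_n$ with $n = l(c)$, and $S_c = kZ_n/I_c$ where $I_c$ is the restriction of $I$ to $kZ_n$. Since $I$ is monomial, so is $I_c$, and it is generated by those elements of $\bF$ which lie along $c$. Applying Proposition~\ref{proposition elements of ideal for 1-Gorenstein monomial algebras} to any relation-cycle along $c$, the lengths $r_i$ of consecutive perfect paths satisfy $r_i + r_{i+1} = r$, and moreover \emph{every} path of length $r$ along $c$ belongs to $\bF$. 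Since no strictly shorter path along $c$ can be in $\bF$ (else $r$ would not be the length of relations for $c$), we conclude $I_c = J^r$ for $J$ the arrow ideal of $kZ_n$, and hence $S_c \cong kZ_n/J^r$.

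Second, $S_c = kZ_n/J^r$ is a self-injective Nakayama algebra, so $\Gproj S_c = \mod S_c$ and the Buchweitz--Happel theorem (Theorem~\ref{theorem Buchweitz theorem}) yields a triangle equivalence
$$D_{sg}(S_c) \simeq \underline{\Gproj}\, S_c = \underline{\mod}\, S_c.$$
It remains to identify $\underline{\mod}(kZ_n/J^r)$ with $D^b(\A_{r-1})/[\tau^n]$. This is precisely the classical description of the stable module category of a self-injective Nakayama algebra as a triangulated orbit category of Dynkin type $\A$; concretely, one obtains it from Ringel's result \cite{Rin} (Proposition~1 therein gives the stable category of a Nakayama algebra, and for the self-injective case one gets an orbit category of $D^b(\A_{r-1})$), combined with Keller's construction \cite{Ke} of the triangulated orbit category.

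The main obstacle is the third step: pinning down the precise form of the orbit category, since different conventions (orbit by $\tau^n$, by $\Sigma^n = [n]$, by $\nu^n$) appear in the literature. A direct approach which avoids delicate bookkeeping is to apply Theorem~\ref{main theorem} to $S_c$ (viewed as a positively graded algebra with each arrow in degree one) to produce a tilting object $T_c \in \underline{\mod}\, S_c$, compute $\End(T_c)$ to identify it as $k\A_{r-1}$ (the indecomposable summands of $T_c$ form a chain of $r-1$ non-projective uniserial modules connected by irreducible monomorphisms along a fixed vertex of the cycle), and thereby get an equivalence $\underline{\mod}\, S_c \simeq K^b(\proj k\A_{r-1}) \simeq D^b(\A_{r-1})$ on the graded level; then descending through the forgetful functor $F : \underline{\mod}^{\Z} S_c \to \underline{\mod}\, S_c$ realises $\underline{\mod}\, S_c$ as the orbit category under the autoequivalence induced by the grading shift, which translates to $\tau^n$ on $D^b(\A_{r-1})$ because the perfect paths of length $1,2,\dots,r-1$ at each of the $n$ vertices of $Z_n$ form a single $\tau$-orbit of length $n(r-1)$ in $D^b(\A_{r-1})/\tau^n$.
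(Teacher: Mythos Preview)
Your proof has a genuine gap in the very first step. You assert that the underlying quiver of $S_c$ is the basic $n$-cycle $Z_n$, but this is not true in general. The cycle $c = \alpha_n\cdots\alpha_1 \in \cc(A)$ is repetition-free only in the sense that no \emph{arrow} appears twice; nothing prevents it from passing through the same \emph{vertex} of $Q$ more than once. When that happens, the subquiver of $Q$ supporting $c$ has strictly fewer than $n$ vertices, and $S_c$ is not isomorphic to $kZ_n/J^r$ but rather to a quotient of it obtained by identifying certain vertices of $Z_n$. Example~\ref{example cover functor} in the paper exhibits exactly this phenomenon: the $6$-cycle $\alpha_6\alpha_5\alpha_4\alpha_3\alpha_2\alpha_1$ in the quiver $Q(E)$ of Fig.~10 passes through the vertex $\bar{3}$ twice, and the resulting algebra is not Nakayama at all. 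Consequently, your identification $S_c \cong kZ_n/J^r$ fails, and with it the direct appeal to the stable category of a self-injective Nakayama algebra.

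This obstruction is precisely why the paper builds the machinery of Br\"{u}stle's gluing algebras in Section~6.1. The paper's proof observes that $S_c$ is obtained from the genuine self-injective Nakayama algebra $S = kZ_n/J^r$ by a finite sequence of vertex-gluings (realising the vertex repetitions along $c$), and then invokes Theorem~\ref{theorem singularity equivalence of gluing vertices} to conclude that each gluing preserves the singularity category, whence $D_{sg}(S_c)\simeq D_{sg}(S)$. Only at that point does the classical identification $\underline{\mod}\,S \simeq D^b(\A_{r-1})/[\tau^n]$ apply. Your argument is valid only under the additional hypothesis that $c$ visits each vertex of $Q$ at most once; in the general case the gluing step is essential and cannot be bypassed.
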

\begin{proof}
Let $Z_n$ be a basic $n$-cycle, that is an oriented cycle of $n$ vertices, $\Lambda =KZ_n/J^r$, where $J$ is the ideal of $KZ_n$ generated by arrows of $Z_n$. Then $\Lambda $ is a self-injective Nakayama algebra and $\underline{\mod}\Lambda \simeq  D^b(\A_{r-1})/[\tau^{n}]$.

For $c\in\cc(A)$, we claim that there is a series of quivers $Q^0=Z_n$, $Q^1$, $\dots$, $Q^m$, and involutions $E_0$, $E_1$, $\dots$, $E_m$ on the sets of vertices of $Q^0$, $Q^1$, $\dots$, $Q^m$ respectively, such that the following hold:
\begin{itemize}
\item $Q^{i+1}=Q^i(E_i)$ for $1\leq i<m$, and $Q^m(E_m)=Q_c$;
\item  $\Lambda _m=\Lambda _c$, where $\Lambda _i=Q^i(E_i)/I(E_i)$ for $0\leq i\leq m$.
\end{itemize}
In fact, let $c=\alpha_n\cdots\alpha_1\in\cc(A)$. Denote by $\mu$ the number of vertices in $Q_c$. Assume that $(Q_c)_0=\{1,2,\dots, \mu\}$. Then $\mu\leq n$. Let $m=n-\mu$. If $\mu=n$, then the claim is obvious.
Otherwise, there exists a vertex $a$ such that the number of arrows adjacent to $a$ is $>2$. Because $c$ is an oriented cycle, there exist at least two arrows $\alpha_i,\alpha_j$ ($i<j$) such that $t(\alpha_i)=a=t(\alpha_j)$.  Then define $Q^m$ to be the quiver as follows:
\begin{itemize}
\item the vertex set of $Q^m$ is $(Q_c)_0\cup \{\mu+1\}$;
\item the arrow set of $Q^m$ is $\{ \alpha'_k\mid \alpha_k \text{ if } k\notin\{j,j+1\}; \alpha_j':s(\alpha_j)\rightarrow \mu+1,\alpha'_{j+1}:\mu+1\rightarrow t(\alpha_{j+1})\}$.
\end{itemize}
So $c'=\alpha'_n\cdots \alpha_1'$ is a repetition-free cycle of $Q_m$. Denote by $r$ the length of relations for $c$. Let $I_m$ be the ideal of $KQ_m$ generated by $\alpha'_{k+r-1}\cdots \alpha'_{k}$ for all $k\in\Z/n\Z$.
It is easy to see that $\Lambda_c$ is a Br\"{u}stle's gluing algebra of $KQ_m/I_m$ by identifying the vertices $a$ and $\mu+1$.
Inductively, we can construct the desired series of quivers $Q^0=Z_n$, $Q^1$, $\dots$, $Q^m$.

Then Theorem \ref{theorem singularity equivalence of gluing vertices} implies that $\Lambda $, $\Lambda _1$, $\dots$, $\Lambda _m=\Lambda _c$ are singularity equivalent. So
$$D_{sg}(\Lambda _{c})\simeq D_{sg}(\Lambda )\simeq \underline{\mod}\Lambda \simeq D^b(\A_{r-1})/[\tau^{n}].$$
\end{proof}

By combining Lemma \ref{lemma singularity category of 1-Gorenstein 1} and Lemma \ref{lemma singularity category of 1-Gorenstein 2}, we get the following result.

\begin{theorem}\label{singularity category of 1-gorenstein monomial algebras}
Let $A$ be a $1$-Gorenstein monomial algebra. Then
$$\underline{\Gproj}A \simeq D_{sg} (A)\simeq \coprod_{c\in \cc(A)}  D^b(\A_{r_c-1})/[\tau^{n_c}],$$
where $n_c=l(c)$, $r_c$ is the length of relations for $c$, $D^b(\A_{r_c-1})$ is the derived category of type $\A_{r_c-1}$ with $\tau$ the Auslander-Reiten functor, and $D^b(\A_{r_c-1})/[\tau^{n_c}]$ denotes the triangulated orbit category in the sense of \cite{Ke}.
\end{theorem}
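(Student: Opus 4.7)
The plan is to combine the two preceding lemmas with Buchweitz's theorem, so the proof at this stage is essentially formal. First, because $A$ is $1$-Gorenstein, Theorem \ref{theorem Buchweitz theorem}(ii) supplies the triangle equivalence $\underline{\Gproj}A \simeq D_{sg}(A)$, giving the first isomorphism of the statement.

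Next, I would apply Lemma \ref{lemma singularity category of 1-Gorenstein 1} to decompose $D_{sg}(A) \simeq \coprod_{c\in\cc(A)} D_{sg}(S_c)$, where $S_c$ is the smallest subalgebra of $A$ containing the cyclic subquiver $c$. Then Lemma \ref{lemma singularity category of 1-Gorenstein 2} identifies each factor $D_{sg}(S_c) \simeq D^b(\A_{r_c-1})/[\tau^{n_c}]$. Chaining the three equivalences produces the claimed description, and the numerical invariants match on the nose: $n_c = l(c)$ is the cyclic length and $r_c$ is the common length of the minimal relations along $c$, which is well-defined by Proposition \ref{proposition elements of ideal for 1-Gorenstein monomial algebras}.

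The real work has been absorbed into the preceding two lemmas, so there is no new obstacle at this step. The harder of the two inputs is Lemma \ref{lemma singularity category of 1-Gorenstein 1}: one must verify that each inclusion $\iota_c : S_c \hookrightarrow A$ makes $A$ projective as both a left and right $S_c$-module, so that $\iota_*$ and $\iota^*$ descend to singularity categories; that $\iota_*$ is fully faithful on each summand; and crucially that there are no cross morphisms between the images of distinct cycles, which is exactly Lemma \ref{lemma direct sum} combined with the uniqueness in Lemma \ref{lemma property of oriented cycle}. Essential surjectivity then follows from the perfect-path classification in Theorem \ref{theorem bijection of perfect path and Gorenstein projective modules}, since each indecomposable Gorenstein projective $Ap$ comes from a perfect path $p$ which lies along a unique $c\in\cc(A)$. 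For Lemma \ref{lemma singularity category of 1-Gorenstein 2}, the decisive device is the Br\"{u}stle-gluing invariance (Theorem \ref{theorem singularity equivalence of gluing vertices}), which reduces $S_c$ to the self-injective Nakayama algebra $kQ(n_c)/J^{r_c}$ whose stable module category is precisely $D^b(\A_{r_c-1})/[\tau^{n_c}]$ by the classical computation of Rickard-Happel type.
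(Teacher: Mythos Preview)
Your proposal is correct and follows exactly the paper's approach: the theorem is obtained immediately by combining Lemma \ref{lemma singularity category of 1-Gorenstein 1} and Lemma \ref{lemma singularity category of 1-Gorenstein 2}, with Buchweitz's theorem supplying the first equivalence. Your additional paragraph recapitulating the ingredients of those lemmas is accurate and matches the paper's development.
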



\begin{thebibliography}{AAA}
\bibitem{AI} T. Aihara and O. Iyama, {\em Silting mutation in triangulated categories}, J. Lond. Math. Soc. (2) {\bf 85} (2012), no. 3, 633--668.
\bibitem{ABS1} I. Assem, T. Br\"{u}stle and R. Schiffler, {\em Cluster-tilted algebras as trivial extensions},
Bull. London Math. Soc. {\bf40} (2008), 151--162.
\bibitem{ASS} I. Assem, D. Simson and A. Skowro\'{n}ski, Elements of representation theory of associative algebras, Volume 1
Techniques of Representation Theory, London Math. Soc. Student Texts, vol. {\bf65}, Cambridge Univ. Press, Cambridge, New York (2004).

\bibitem{AS} I. Assem and A. Skowro\'{n}ski, {\em Iterated tilted algebras of type $\tilde{\A}_n$}, Math. Z. {\bf195} (1987), 269--290.

\bibitem{AB} M. Auslander and R.O. Buchweitz, {\em The homological theory of maximal
Cohen-Macaulay approximations}, M\'{e}m Soc. Math. France {\bf38} (1989), 5--37

\bibitem{AR} M. Auslander and I. Reiten, {\em Applications of contravariantly finite subcategories}, Adv. Math. {\bf86} (1991), 111--152.

\bibitem{AR2} M. Auslander and I. Reiten, {\em Cohen-Macaulay and Gorenstein Artin algebras}, in: Representation Theory of
Finite Groups and Finite-Dimensional Algebras, Bielefeld, 1991, in: Progr. Math., vol. {\bf95}, Birkh\"{a}user, Basel, 1991, 221--245.
\bibitem{AR3}
M. Auslander and I. Reiten, {\em $k$-Gorenstein algebras and syzygy modules}, J.
Pure and Appl. Algebra {\bf92} (1994), 1--27.

\bibitem{AM} L. L. Avramov and A. Martsinkovsky, {\em Absolute, relative and Tate cohomology of modules of finite Gorenstein dimensions}, Proc. London Math. Soc. {\bf85} (3)(2002), 393--440.

\bibitem{Be} A. Beligiannis, {\em Cohen-Macaulay modules, (co)torsion pairs and virtually Gorenstein algebras}, J. Algebra {\bf288} (2005), 137--211.
\bibitem{BGS} G. Bobi\'{n}ski, C. Gei{\ss} and A. Skowro\'{n}ski, {\em Classification of discrete derived categories}, Cent. Eur. J. Math.
{\bf2} (2014), 19--49.
\bibitem{Br} T. Br\"{u}stle, {\em Kit algebras}, J. Algebra {\bf240} (1)(2001), 1--24.


\bibitem{BMR1} A. Buan, R. Marsh and I. Reiten, {\em Cluster-tilted algebras}, Trans. Amer. Math. Soc. {\bf 359} (1)(2007), 323--332.
\bibitem{BMR2} A. Buan, R. Marsh and I. Reiten, {\em Cluster-tilted algebras of finite representation type}, J. Algebra {\bf306} (2)(2006), 412--431.



\bibitem{Bu} R. Buchweitz, {\em Maximal Cohen-Macaulay modules and Tate cohomology over Gorenstein Rings}, Unpublished Manuscript, 1987. Available at Http://hdl.handle.net/1807/16682.

\bibitem{B} M. C. R. Butler, {\em The syzygy theorem for monomial algebras}, Trends in the Representation Theory of Finite Dimensional Algebras (seattle, Washington 1997), E. L. Green and B. Huisgen-Zimmermann, Eds. Contemp. Math. Vol. {\bf229}, 111--116. Amer. Math. Soc., Providence, 1998.


\bibitem{Chen} X-W. Chen, {\em The singularity category of a quadratic monomial algebra}, The Quarterly Journal of Mathematics {\bf69} (3)(2018), 1015--1033.
\bibitem{CSZ}  X-W. Chen, D. Shen and G. Zhou, {\em The Gorentein-projective modules over a monomial algebra}, Proceedings of the Royal Society of Edinburgh Section A: Mathematics {\bf148} (6)(2018), 1115-1134.

\bibitem{CGLu} X. Chen, S. Geng and M. Lu, {\em The singularity categories of the Cluster-tilted algebras of Dynkin type}, Algebr. Represent. Theor. {\bf18} (2)(2015), 531--554.

\bibitem{CL} X. Chen and Ming Lu, {\em Desingularization of quiver Grassmannians for gentle Algebras}, Algebr. Represent. Theor., {\bf19} (6)(2016), 1321--1345.

\bibitem{EJ} E. E. Enochs and O. M. G. Jenda, Relative homological algebra, de Gruyter Exp. Math. {\bf30}, Walter de Gruyter Co., 2000.


\bibitem{GLS} C. Geiss, B. Leclerc and J. Schr\"{o}er, {\em Quivers with relations for symmetrizable Cartan matrices I: Foundations}, Invent. Math. {\bf209} (1)(2017), 61--158.
%\bibitem{GT} R. G\"{o}bel and J. Trlifaj, Approximations and Endomorphism Algebras of Modules.
%de Gruyter Expositions in Math. 41, Walter de Gruyter, Berlin New York,
%2006.
\bibitem{GG1} R. Gordon and E. L. Green, {\em Graded Artin algebras}, J. Algebra {\bf 76} (1982), 111--137.
\bibitem{GG} R. Gordon and E. L. Green, {\em Representation theory of graded Artin algebras}, J. Algebra {\bf76} (1982), 138--152.



\bibitem{Ha3} D. Happel, {\em On Gorenstein algebras}, In: Progress in Math. {\bf95}, Birkh\"{a}user Verlag, Basel, 1991, 389--404.

\bibitem{HIMO} M. Herschend, O. Iyama, H. Minamoto and S. Oppermann, {\em Representation theory of Gelge-Lenzing complete intersections},
\href{https://arxiv.org/abs/1409.0668}{arXiv:1409.0668}


\bibitem{IO} O. Iyama and S. Oppermann, {\em Stable categories of higher preprojective algebras}, Adv. Math. {\bf244} (2013), 23--68.

\bibitem{IT}  O. Iyama and R. Takahashi, {\em Tilting and cluster tilting for quotient singularities}, Math. Ann. {\bf356} (2013),
1065--1105.

\bibitem{Ka} M. Kalck, {\em Singularity categories of gentle algebras}, Bull. London Math. Soc. {\bf47} (1)(2015), 65--74.
\bibitem{Ke0} B. Keller, {\em Chain complexes and stable categories}, Manus. Math. {\bf67} (1990), 379--417.

\bibitem{Ke1} B. Keller, {\em Derived categories and tilting}, Handbook of tilting theory, 49--104, London Math. Soc. Lecture Note Ser., {\bf332}, Cambridge Univ. Press, Cambridge, 2007.

%\bibitem{Ke1} B. Keller, {\em Deriving DG categories}, Ann. Sci. Ec. Horm. Super. (4){\bf27}(1)(1994), 63--102.
\bibitem{Ke} B. Keller, {\em On triangulated orbit categories}, Doc. Math. {\bf10} (2005), 551--581.

\bibitem{KR} B. Keller and I. Reiten, {\em Cluster-tilted algebras are Gorenstein and stably Calabi-Yau}, Adv.
Math. {\bf211} (2007), 123--151.
\bibitem{KV} B. Keller and D. Vossieck, {\em Aisles in derived categories}, Bull.
Soc. Math. Belg. S\'{e}r. A {\bf40}(2)(1988), 239--253.
%\bibitem{KV2} B. Keller and D. Vossieck, {\em Sous les cat\'{e}gories d\'{e}riv\'{e}es}, (French) [Beneath the derived categories]
%C. R. Acad. Sci. Paris S\'{e}r. I Math. {\bf305} (6)(1987), 225--228.
\bibitem{Ki1} Y. Kimura, {\em Tilting theory of preprojective algebras and $c$-sortable elements}, J. Algebra {\bf503} (2018), 186--221.
\bibitem{Ki2} Y. Kimura, {\em Tilting and cluster tilting for preprojective algebras and coxeter groups}, \href{https://arxiv.org/abs/1607.00637}{arXiv:1607.00637}
\bibitem{Ki3} Y. Kimura, {\em Singularity categories of derived categories of hereditary algebras are derived categories}, J.
Pure and Appl. Algebra {\bf224} (2)(2020), 836--859.


\bibitem{Kra} H. Krause, {\em Localization theory for triangulated categories}, In: Triangulated categories, 161--235,
London Math. Soc. Lecture Note Ser. {\bf375}, Cambridge Univ. Press, Cambridge, 2010.


\bibitem{KZ} S. Koenig and B. Zhu, {\em From triangulated categories to abelian categories--cluster tilting in a  general framework}, Math. Z. {\bf258} (1)(2008), 143--160.

\bibitem{Le} T. Levasseur, {\em Some properties of noncommutative regular graded rings}, Glasgow Math. J. {\bf34} (3)(1992), 277--300.

\bibitem{Lu} M. Lu, {\em Singularity categories of quiver representations over local rings}, Colloq. Math. {\bf161} (2020), 1--33.


\bibitem{MKY} H. Minamoto, Y. Kimura and K. Yamaura, In preparing.

\bibitem{MU} I. Mori, K. Ueyama, {\em Stable categories of graded maximal Cohen-Macaulay modules over noncommutative
quotient singularities}, Adv. Math. {\bf297} (2016), 54--92.
\bibitem{Or1} D. Orlov, {\em Triangulated categories of singularities and D-branes in Landau-Ginzburg models}, Proc. Steklv Inst. Math. {\bf246}(3)(2004), 227--248.

\bibitem{Rei} I. Reiten, {\em Calabi-Yau categories}, Talk at the meeting: Calabi-Yau algebras and $N$-Koszul algebras (CIRM, 2007).


\bibitem{Ri} J. Rickard, {\em Derived categories and stable equivalences}, J.
Pure and Appl. Algebra {\bf61} (1989), 303--317.

\bibitem{Rin} C. M. Ringel, {\em The Gorenstein projective modules for the Nakayama algebras. I.}, J. Algebra
{\bf385} (2013), 241--261.
\bibitem{RS} C. M. Ringel and M. Schmidmeier, {\em Invariant subspaces of nilpotent linear operators. I.}, J. Reine Angew. Math. {\bf614} (2008), 1--52.
\bibitem{RZ} C. M. Ringel and P. Zhang, {\em Representations of quivers over the algebra of dual numbers}, J. Algebra {\bf475} (2017), 237--360.

%\bibitem{Sa} L. Salce, Cotorsion theories for abelian groups. Symposia Math. 23, Cambridge
%Univ. Press, Cambridge, (1979), 11-32.
\bibitem{Shen1} D. Shen, {\em A description of Gorenstein projective modules over the tensor products of algebras}, Comm. Algebra {\bf47} (7)(2019), 2753--2765.
%. Preprint, available at arXiv:1602.00116[math.RT].
\bibitem{Shen} D. Shen, {\em A note on homological properties of Nakayama algebras}, Archiv der Mathematik {\bf108}(3)(2017), 251--261.

\bibitem{U} K. Ueyama, {\em Graded maximal Cohen-Macaulay modules over noncommutative graded Gorenstein isolated singularities},
J. Algebra {\bf383} (2013), 85--103.

\bibitem{V} D. Vossieck, {\em The algebras with discrete derived category}, J. Algebra {\bf243} (1)(2001), 168--176.


\bibitem{Ya} K. Yamaura, {\em Realizing stable categories as derived categories}, Adv. Math. {\bf248} (2013), 784--819.
\bibitem{Za} A. Zaks, {\em Injective dimensions of semiprimary rings}, J. Algebra {\bf13} (1969), 73--89.
\bibitem{Zh} B. Zhu, {\em Equivalences between cluster categories}, J. Algebra {\bf304} (2006), 832--850.

%\bibitem{Zh2} B. Zhu, {\em Cluster-tilted algebras and their intermediate coverings}, Comm. Algebra {\bf39} (7)(2008), 2437--2448.

\bibitem{Zi} B. Zimmermann Huisgen, {\em Predicting syzygies over monomial relation algebras}, Manu.
Math. {\bf70} (1991), 157--182.
\end{thebibliography}
\end{document}